\theoremstyle{plain}
\newtheorem{thm}{Theorem}[subsection]
\newtheorem{lem}[thm]{Lemma}
\newtheorem{prop}[thm]{Proposition}
\newtheorem{corol}[thm]{Corollary}
\newtheorem{conjecture}[thm]{Conjecture}
\theoremstyle{definition}
\newtheorem{defi}[thm]{Definition}
\theoremstyle{remark}
\newtheorem{rque}[thm]{Remark}
\def \F {\mathbb{F}}
\def \Fq {\F_q}
\def \Fqb {\overline{\F}_q}
\def \Flb {\overline{\F}_{\ell}}
\def \Z {\mathbb{Z}}
\def \Zl {\Z_{\ell}}
\def \Zlb {\overline{\Z}_{\ell}}
\def \Q {\mathbb{Q}}
\def \Qlb {\overline{\Q}_{\ell}}
\def \Ind {\mathrm{Ind}}
\def \Gm {\mathbb{G}_m}
\def \Gbf {\mathbf{G}}
\def \Bbf {\mathbf{B}}
\def \Ubf {\mathbf{U}}
\def \Tbf {\mathbf{T}}
\def \Wbf {\mathbf{W}}
\def \Nbf {\mathbf{N}}
\def \BwB {\Bbf w\Bbf}
\def \Hom {\mathrm{Hom}}
\def \id {\mathrm{id}}
\def \1 {\mathbb{1}}
\def \End {\mathrm{End}}
\def \Frob {\mathrm{Frob}}
\def \Weil {\mathrm{Weil}}
\def \Frob {\mathrm{F}}
\def \Ad {\mathrm{Ad}}
\def \mon {\mathrm{mon}}
\def \triv {\mathrm{triv}}
\def \RGamma {\mathrm{R\Gamma}}
\def \DD {\mathrm{D}}
\def \pt {\mathrm{pt}}
\def \unip {\mathrm{unip}}
\def \Perv {\mathrm{Perv}}
\def \pt {\mathrm{pt}}
\def \eq {\mathrm{eq}}
\def \IC {\mathrm{IC}}
\def \Rep {\mathrm{Rep}}
\def \DDc {\DD_{\cons}}
\def \cons {\mathrm{c}}
\def \Perf {\mathrm{Perf}}
\def \inv {\mathrm{inv}}
\def \Irr {\mathrm{Irr}}
\def \Ocal {\mathcal{O}}
\def \tot {\mathrm{tot}}
\def \Irr {\mathrm{Irr}}
\def \Ocal {\mathcal{O}}
\def \Lcal {\mathcal{L}}
\def \tr {\mathrm{tr}}
\def \ch {\mathrm{ch}}
\def \DL {\mathrm{DL}}
\def \tilt {\mathrm{tilt}}
\def \maps {\mathrm{maps}}
\def \Ch {\mathrm{Ch}}
\def \ch {\mathrm{ch}}
\def \hc {\mathrm{hc}}
\def \mix {\mathrm{mix}}
\def \equi {\mathrm{eq}}
\def \ex {\mathrm{ex}}
\def \IH {\mathrm{IH}}
\newcommand{\Hh}{\mathcal{H}}
\title{Tilting representations of finite groups of Lie type}
\author{Arnaud Eteve}
\begin{document}

\maketitle

\noindent\textbf{Abstract.} Let $\Gbf$ be a connected reductive group over a finite field $\Fq$ of characteristic $p > 0$. In this paper, we study a category which we call Deligne--Lusztig category $\mathcal{O}$ and whose definition is similar to category $\mathcal{O}$. We use this to construct a collection of representations of $\Gbf(\Fq)$ which we call the tilting representations. They form a generating collection of integral projective representations of $\Gbf(\Fq)$. Finally we compute the character of these representations and relate their expression to previous calculations of Lusztig and we then use this to establish a conjecture of Dudas--Malle of 2014.

\tableofcontents

\section{Introduction}

Let $p > 0$ be a prime and $q = p^r$ a power of $p$ and let $\Fq \subset \Fqb = k$ be the finite field with $q$ elements and its algebraic closure. Let $\Gbf$ be a connected reductive group over $k$ equipped with a Frobenius endomorphism $\Frob : \Gbf \to \Gbf$, i.e. a purely inseparable isogeny such that a power of $\Frob$ is the Frobenius coming from an $\F_{q^s}$-structure. We fix $\Bbf = \Tbf\Ubf$ a Borel pair that is stable under $\Frob$ and we denote by $\Wbf$ the corresponding Weyl group. This paper has two main goals : 
\begin{enumerate}
\item Introduce and study a category which we call `Deligne--Lusztig category $\mathcal{O}$' whose definition mimics the geometric realization of classical category $\mathcal{O}$ as sheaves on $\Ubf \backslash \Gbf/\Bbf$. 
\item Introduce a class of representations of the finite group of Lie type ${\Gbf}^{\Frob}$, which we call tilting representations and study their properties.
\end{enumerate}

\subsection{Deligne--Lusztig category $\mathcal{O}$}

\subsubsection{Category $\mathcal{O}$}

Category $\mathcal{O}$ is a central object in representation theory, it links, through the Kazhdan--Lusztig conjectures \cite{KazhdanLusztig}, \cite{KLSchubert}, the theory of representations of semisimple Lie algebras with the geometry of the flag varieties and provides a categorification of the Hecke algebra associated to $\Wbf$, we refer to \cite{AcharBook} for a historical account. It is now standard to define 
$$\mathcal{O} = \DD^b_{c,\Ubf}(\Gbf/\Bbf, \Qlb)$$
as the category of $\ell$-adic sheaves, where $\ell \neq p$ is a prime number, on the flag variety of $\Gbf$ that are locally constant along $\Ubf$-orbits. Its perverse heart is an abelian category which has the structure of a highest weight category \cite{BGS} and therefore has very good homological properties. While it is known that this category provides a categorification of the Hecke algebra, it is not equipped with a monoidal structure. to reveal the algebra structure of the Hecke algebra, one must pass to Hecke categories. They come in two flavors.
\begin{enumerate}
\item The equivariant Hecke category: $\Hh^{\eq}$. We define it, for this introduction, as the category of sheaves
$$\DD^b_{c,\Bbf}(\Gbf/\Bbf, \Qlb) = \DD^b_c(\Bbf \backslash \Gbf/\Bbf, \Qlb)$$ 
the category of $\Bbf$-equivariant sheaves on $\Gbf/\Bbf$. This is a monoidal category that acts on category $\mathcal{O}$ which categorifies the Hecke algebra.
\item The monodromic Hecke category : $\Hh^{\mon}$, initially constructed by \cite{BezrukavnikovYun}. It is a certain category of sheaves on 
$\Ubf \backslash \Gbf/\Ubf.$
It follows from \emph{loc. cit.} that this category also provides a categorification of the Hecke algebra and acts on category $\mathcal{O}$. 
\end{enumerate}
Both of these actions categorify the left and right module structures of the Hecke algebra on itself. We equip $\Gbf$ with its Bruhat stratification $\Gbf = \sqcup_{w \in \Wbf} \Bbf w\Bbf$, this induces a stratification of $\Gbf/\Bbf$ and all the quotients mentioned before. We let $j_w : \BwB/\Bbf \subset \Gbf/\Bbf$ be the inclusion. Category $\mathcal{O}$ possesses four collections of perverse sheaves that are of interest to representation theory :
\begin{enumerate}
\item the standard sheaves $\Delta_w^{\mathcal{O}} = j_{w,!} \Qlb [\ell(w)]$, 
\item the costandard sheaves $\nabla_w^{\mathcal{O}} = j_{w,*} \Qlb[\ell(w)]$, 
\item the simple sheaves $\IC_{w}^{\Ocal} = j_{w,!*} \Qlb[\ell(w)]$, 
\item the indecomposable tilting sheaves $T_w^{\Ocal}$,
where $\ell(w)$ is the length of the element $w \in \Wbf$.
\end{enumerate}
The tilting objects are defined as follows. 
\begin{defi}
Let $A \in \Perv_{\Ubf}(\Gbf/\Bbf ,\Qlb)$. Then
\begin{enumerate}
\item a $\Delta$-filtration on $A$ is a filtration whose graded pieces are standard sheaves, 
\item a $\nabla$-filtration on $A$ is a filtration whose graded pieces are costandard sheaves, 
\item the object $A$ is tilting if it has both a $\Delta$ and a $\nabla$-filtration. 
\end{enumerate}
\end{defi}
In the context of category $\mathcal{O}$, it is known that there exists a unique indecomposable tilting sheaf $T_w^{\Ocal}$ such that $T_w^{\Ocal}$ contains $\Delta_w^{\Ocal}$ in some (equivalently any) $\Delta$-filtration with multiplicity one and $T_w^{\Ocal}$ is supported on the closure of $\BwB/\Bbf$. 

Hecke categories only have some of these collections, namely 
\begin{enumerate}
\item the equivariant Hecke category $\Hh^{\eq}$ has standard, costandard and simple objects but usually no tilting objects,
\item the monodromic Hecke category $\Hh^{\mon}$ has standard, costandard and tilting objects but usually not a good theory of simple object. 
\end{enumerate}

So far, we have restricted ourselves to the case of $\Qlb$-sheaves but much of what has been discussed so far holds with $\Flb$-sheaves instead, see for instance \cite{AcharRicheKoszulDuality1}, \cite{RicheSoergelWilliamson}. 

\subsubsection{Deligne--Lusztig catgory $\Ocal$}

In our context we also have the Frobenius endomorphism $\Frob : \Gbf \to \Gbf$. We are interested in the representation theory of the finite group ${\Gbf}^{\Frob}$. Consider the action of $\Gbf$ on itself by $g.x = gx\Frob(g^{-1})$, we denote this action by $\Ad_{\Frob}$. Since $\Gbf$ is connected Lang's theorem provides an isomorphism of algebraic stacks 
$$\frac{\Gbf}{\Ad_{\Frob}\Gbf} = \pt/{\Gbf}^{\Frob}$$ 
and by étale descent there is a natural equivalence of categories $\DD(\Rep_{\Lambda} {\Gbf}^{\Frob}) = \DD(\pt/{\Gbf}^{\Frob}, \Lambda)$, where $\Lambda$ is a coefficient ring for which the category of étale sheaves on $\pt/{\Gbf}^{\Frob}$ makes sense, see the notation section \ref{sec:Notations}. We introduce the stack 
$$\frac{\Ubf \backslash \Gbf/\Ubf}{\Ad_{\Frob}\Tbf}$$
which we call the horocycle stack. We had previously introduced this stack in \cite{EtevePaper1}. We define 
$$\Ocal^{\DL}_{\Lambda} = \DD(\frac{\Ubf \backslash \Gbf/\Ubf}{\Ad_{\Frob}\Tbf}, \Lambda)$$
the category of $\Lambda$-sheaves on it where $\Lambda \in \{\Flb, \Qlb, \Zlb\}$. We call this category `Deligne--Lusztig category $\mathcal{O}$'. We consider it as a variant of the classical category $\mathcal{O}$. To put some emphasis on the comparison, category $\Ocal$ is classically realized as the category of sheaves on 
$$(\Ubf \backslash \Gbf/\Ubf)/\Tbf$$
where the torus $\Tbf$ acts by right translations. 

We equip this stack with its Bruhat stratification and, for $w \in \Wbf$, the corresponding stratum is then isomorphic to 
$$\frac{\Ubf \backslash \BwB/\Ubf}{\Ad_{\Frob}\Tbf} = \pt/({\Tbf}^{w\Frob} \ltimes \Ubf \cap {^w}\Ubf).$$
We refer to section \ref{sec:DLCatO} for a discussion. There is a natural equivalence $\DD(\pt/({\Tbf}^{w\Frob} \ltimes \Ubf \cap {^w}\Ubf), \Lambda) = \DD(\Rep_{\Lambda} {\Tbf}^{w\Frob})$ which we normalize to be $t$-exact. It follows that $\Ocal^{\DL}$ is glued from the categories of representations of the finite groups ${\Tbf}^{w\Frob}$. Given a character $\chi : {\Tbf}^{w\Frob} \to \Lambda^{\times}$ of ${\Tbf}^{w\Frob}$, let $E_{\chi}$ be the projective cover of $\chi$ in the abelian category $\Rep_{\Lambda} {\Tbf}^{w\Frob}$ of representations of ${\Tbf}^{w\Frob}$ on $\Lambda$-modules. We set 
\begin{enumerate}
\item $\Delta_{w,\chi} = j_{w,!}E_{\chi}[\ell(w)]$, the standard objects 
\item $\nabla_{w,\chi} = j_{w,*}E_{\chi}[\ell(w)]$, the costandard objects.
\end{enumerate}
We denote by $\Ocal^{\DL, \heartsuit}$ the perverse heart of $\Ocal^{\DL}$.

We then proceed to study this category and we show that this category has much of the structure of a highest weight category. Let us summarize the main properties of this category, see Theorem \ref{thm:StructureCatO} and Lemma \ref{lem:CharDeltaFilt}.
\begin{thm}
\begin{enumerate}
\item The category $\Ocal^{\DL}$ is compactly generated, and the standard and costandard sheaves are compact. 
\item For all pairs $(w, \chi)$, where $w \in \Wbf$ and $\chi$ is a character of ${\Tbf}^{w\Frob}$, there exists a unique (up to isomorphism) indecomposable tilting sheaf $T_{w,\chi}$ supported on the closure of $\BwB$ containing $\Delta_{w,\chi}$ with multiplicity $1$ in a $\Delta$-filtration, moreover any indecomposable tilting sheaf is of this form. 
\item A sheaf $T \in \Ocal^{\DL, \heartsuit}$ is tilting if and only if for all $w$, the objects $j_w^*T$ and $j_w^!T$ are perverse and projective. 
\item The subcategory $\Ocal^{\DL}_{\tilt}$ of tilting sheaves is an additive, Karoubian category satisfying the Krull--Schmidt property and generates $\Ocal^{\DL}$. 
\end{enumerate}
\end{thm}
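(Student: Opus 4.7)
The plan is to adapt the standard highest weight category arguments to the horocycle stack, exploiting the finite Bruhat stratification whose strata are $\pt/({\Tbf}^{w\Frob} \ltimes \Ubf \cap {^w}\Ubf)$ with derived category equivalent to $\DD(\Rep_{\Lambda} {\Tbf}^{w\Frob})$, the unipotent factor being inessential when $\ell \neq p$. On each stratum, the projective cover $E_\chi$ of a character $\chi$ is also injective since $\Lambda[{\Tbf}^{w\Frob}]$ is a Frobenius algebra. I would organize the proof along the four claims (i)--(iv).

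For (i), compact generation of $\DD(\Rep_{\Lambda} {\Tbf}^{w\Frob})$ by the $E_\chi$ is classical. The functor $j_{w,!}$ is a left adjoint and preserves compact objects, so the $\Delta_{w,\chi}$ are compact and collectively generate $\Ocal^{\DL}_{\Lambda}$ along the finite stratification. For the costandards, use the Frobenius property of $\Lambda[{\Tbf}^{w\Frob}]$ together with the finiteness of the stratification to deduce compactness of $j_{w,*}E_\chi[\ell(w)]$ by a dual argument. For (iii), I would argue by recollement: if $T$ has a $\Delta$-filtration, then for any stratum $v$, the functor $j_v^*$ kills $\Delta_{w,\chi}$ for $w \neq v$ and sends $\Delta_{v,\chi}$ to $E_\chi[\ell(v)]$, so $j_v^*T$ is a sum of perverse projectives; the case of $j_v^!$ is dual. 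Conversely, if $j_v^*T$ and $j_v^!T$ are perverse projective for all $v$, pick $v$ open in the support, observe that the counit $j_{v,!}j_v^*T \to T$ is a sum of standards by perverse projectivity, and induct on the support of the cone.

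For (ii), I would construct $T_{w,\chi}$ by induction on $\leq w$ in the Bruhat order, starting with $\Delta_{w,\chi}$ on the open cell in its closure. At each step, the current object has a $\Delta$-filtration, and the obstruction to extending the $\nabla$-filtration across the next closed stratum $u$ lies in $\Ext^1(\Delta_{u,\chi'}, T)$. Kill it by a universal extension by $\Delta_{u,\chi'}^{\oplus n}$; this preserves the $\Delta$-filtration on the nose and ensures that the $\nabla$-filtration extends. Indecomposability follows from the locality of the endomorphism ring, deduced by an inductive argument showing that any endomorphism restricting to an isomorphism on $j_w^*$ is itself an isomorphism. Uniqueness then follows from Krull--Schmidt. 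For (iv), additivity and Karoubian-ness are immediate from (iii) since the perverse-projective restrictions pass to summands; Krull--Schmidt follows from the semi-perfectness of the $\Lambda$-finite endomorphism rings of compact objects together with the locality of $\End(T_{w,\chi})$; generation follows because the $\Delta$-filtrations of the $T_{w,\chi}$ recover every standard sheaf in the triangulated hull, and these already generate by (i).

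The main obstacle is the inductive step in (ii). Unlike the classical highest weight setting over an algebraically closed field, each stratum carries a nontrivial finite group ${\Tbf}^{w\Frob}$ whose group algebra need not be semisimple when $\ell$ divides $|{\Tbf}^{w\Frob}|$. The relevant $\Ext^1$ groups must therefore be analyzed as extensions internal to $\Rep_{\Lambda} {\Tbf}^{w\Frob}$, then reassembled along the recollement, before the universal extension construction can be performed. Once this computation is in place, the argument follows the same shape as in the classical case.
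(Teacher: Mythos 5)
Your treatment of (i), (iii), and (iv) follows essentially the paper's own arguments: compact generation by the $\Delta_{w,\chi}$ via the left adjoint $j_{w,!}$ of the continuous $j_w^*$, the recollement/excision characterization of $\Delta$- and $\nabla$-filtrations (the paper peels off the closed stratum where you peel off the open one, a cosmetic difference), and generation by tiltings from the $\Delta$-filtrations. The costandard compactness via the Frobenius/self-injective property of $\Lambda[\Tbf^{w\Frob}]$ is the paper's Verdier duality argument in different clothing.

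The genuine divergence is in (ii). You propose to build $T_{w,\chi}$ from scratch by a universal-extension induction over the Bruhat order, the classical highest-weight-category route. The paper does something structurally different: it observes that the pushforward $\mathfrak{p}_![\dim\Tbf]$ along $\Ubf\backslash\Gbf/\Ubf \to \frac{\Ubf\backslash\Gbf/\Ubf}{\Ad_{\Frob}\Tbf}$ carries free monodromic standards (resp.\ costandards, resp.\ tiltings) in the monodromic Hecke category $\Hh^{\mon}$ to standards (resp.\ costandards, resp.\ tiltings) in $\Ocal^{\DL}$, and then \emph{imports} the already-established existence of indecomposable free monodromic tiltings with the correct support and multiplicity-one property from Bezrukavnikov--Yun (unipotent case) and Gouttard (general monodromy). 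Indecomposability survives the pushforward because $\End(\mathfrak{p}_!T)$ is a quotient of a local ring; uniqueness is the Krull--Schmidt argument you also invoke, with the paper deferring the details to Riche's notes.

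Your route is self-contained, but you yourself flag where the real work lies, and you do not do it: for the universal extension to converge, the relevant $\Ext^1$-groups between standards and partially built tiltings must be shown finitely generated --- and over $\Zlb$, free --- through the recollement of the categories $\Rep_\Lambda\Tbf^{w\Frob}$, which are not semisimple when $\ell$ divides $|\Tbf^{w\Frob}|$. That verification is exactly what makes the Achar--Riche appendix on tilting sheaves over a ring nontrivial, and the paper avoids it entirely by outsourcing to the free monodromic tilting theory. So as written your proposal is a valid alternative strategy for (i), (iii), (iv) and a correct but incomplete one for (ii); the paper's route to (ii) is shorter because the hard analysis was done elsewhere.
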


We also discuss the following relations 
\begin{enumerate}
\item We compare $\Ocal^{\DL}$ with the Hecke categories $\Hh^{\eq}$ and $\Hh^{\mon}$ and their twisted variants of \cite{LusztigYun} and \cite{Gouttard} in section \ref{sec:ComparisonHeckeCat}.
\item We define a mixed version $\Ocal^{\DL, \mix}_{\Qlb}$ of $\Ocal^{\DL}_{\Qlb}$ and we show categorification results in the same vein as the categorification results of $\Ocal$, see section \ref{sec:MixedCategoryO}. 
\item We compare the decomposition numbers of tilting sheaves of $\Ocal^{\DL}$ with the ones of $\Ocal$ in section \ref{sec:ComparisonCatO}.
\end{enumerate}
It follows from this discussion that the structure of $\Ocal^{\DL}$ is essentially controlled by Kazhdan--Lusztig type combinatorics.

\subsection{Tilting representations}

\subsubsection{Deligne--Lusztig theory}

Classically, the representation theory of ${\Gbf}^{\Frob}$ is studied using Deligne--Lusztig theory \cite{DeligneLusztig}. One first defines two collections of varieties 
\begin{enumerate}
\item given $w \in \Wbf$, we define
$$X(w) = \{g\Bbf, g^{-1}\Frob(g) \in \BwB\} \subset \Gbf/\Bbf$$
\item and given a lift $\dot{w} \in \Nbf(\Tbf)$ of the element $w \in \Wbf$, we define
$$Y(\dot{w}) = \{g\Ubf, g^{-1}\Frob(g) \in \Ubf\dot{w}\Ubf \} \subset \Gbf/\Ubf.$$ 
\end{enumerate}
These varieties are smooth of dimension $\ell(w)$ and are equipped with an action of the finite group ${\Gbf}^{\Frob}$. Taking their cohomology yields representations of ${\Gbf}^{\Frob}$ and one usually denotes by 
$$R_w^1 = \sum_i (-1)^{i}[H^i_c(X(w), \Qlb)]$$
the character of $\RGamma_c(X(w), \Qlb)$. More generally, since the variety $Y(\dot{w})$ also carries an action of ${\Tbf}^{w\Frob}$, we also define 
$$R_w^{\theta} = \sum_i (-1)^i[H^i_c(Y(\dot{w}), \Qlb)]_{\theta}$$ 
by taking $\theta$ isotypic parts, where $\theta$ is a character of ${\Tbf}^{w\Frob}$. A very important set of class functions on ${\Gbf}^{\Frob}$ are the uniform class functions. These are the class functions of ${\Gbf}^{\Frob}$ that are linear combinations of the characters $R_w^{\theta}$. An interesting example is provided by the character 
$$[\IH(\overline{X(w)}, \Qlb)] = \sum_i (-1)^i [\IH^i(\overline{X(w)}, \Qlb)]$$
of the intersection cohomology of the closure of the variety $X(w)$ in $\Gbf/\Bbf$ (which is usually a singular variety). There is an explicit relation between this character and the character $R_w^1$ expressed in terms of Kazhdan--Lusztig polynomials done in \cite{LusztigBook}, this relation is a reflection of some geometric relation between Deligne--Lusztig theory and the Hecke category. 

\subsubsection{Tilting representations}

There is a natural functor 
$$\ch =\alpha_!\beta^*[\dim \Ubf] : \Ocal^{\DL} \to \DD(\pt/{\Gbf}^{\Frob}, \Lambda)$$ 
where the maps $\alpha$ and $\beta$ are given by the following correspondence 
$$\frac{\Gbf}{\Ad_{\Frob}\Gbf} \xleftarrow{\alpha} \frac{\Gbf}{\Ad_{\Frob}\Bbf}\xrightarrow{\beta} \frac{\Ubf \backslash \Gbf/\Ubf}{\Ad_{\Frob}\Tbf}.$$
We refer to section \ref{sec:TiltingRep}.
This correspondence is a variant of the correspondence used by Lusztig to define character sheaves \cite{CharacterSheaves1} and more recently in \cite{LusztigCatCenter}. A variant with $\Bbf\backslash \Gbf/\Bbf$ instead of $\frac{\Ubf \backslash \Gbf/\Ubf}{\Ad_{\Frob}\Tbf}$ was studied in \cite{BonnafeDudasRouquier}. 

\begin{thm}[\cite{EtevePaper1}]
There are isomorphisms up to shifts 
$$\ch(\Delta_{w,\chi}) = e_{\chi}\RGamma_c(Y(\dot{w}), \Lambda)[\ell(w)]$$ 
and
$$\ch(\nabla_{w,\chi}) = e_{\chi}\RGamma(Y(\dot{w}), \Lambda)[\ell(w)]$$
where $e_{\chi}$ is the projector for the action of ${\Tbf}^{w\Frob}$ on the block containing $\chi$. 
\end{thm}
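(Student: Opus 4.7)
The plan is to reduce via proper base change to a computation on the Bruhat cell $\BwB$, and then identify the resulting space with (a unipotent quotient of) the Deligne--Lusztig variety $Y(\dot{w})$ via Lang's theorem. First, since $\beta$ is obtained stratum by stratum by quotienting by unipotent group actions, it is smooth; proper base change against the stratum inclusion $j_w$ therefore gives
$$\beta^* \Delta_{w,\chi} \cong j'_{w,!}\, \beta_w^* E_\chi[\ell(w)],$$
where $j'_w : \BwB/\Ad_{\Frob}\Bbf \hookrightarrow \Gbf/\Ad_{\Frob}\Bbf$ is the pre-stratum inclusion and $\beta_w$ is the restriction of $\beta$. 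Up to the shift $[\dim \Ubf]$, this reduces $\ch(\Delta_{w,\chi})$ to $(\alpha\circ j'_w)_!\beta_w^* E_\chi$.

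Next I would base-change the composite $\BwB/\Ad_{\Frob}\Bbf \to \pt/{\Gbf}^{\Frob}$ along the covering $\pt \to \pt/{\Gbf}^{\Frob}$. Using the chosen lift $\dot{w}$ to trivialize the Bruhat cell, the fiber product is identified with a fibration with affine fibers over the quotient of $Y(\dot{w})$ by $\Ubf \cap {^w}\Ubf$, carrying a residual right $\Tbf^{w\Frob}$-action. This is the geometric content of the stratum identification $\frac{\Ubf\backslash \BwB/\Ubf}{\Ad_{\Frob} \Tbf} \cong \pt/(\Tbf^{w\Frob} \ltimes (\Ubf\cap {^w}\Ubf))$ recalled earlier. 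Because $\Ubf \cap {^w}\Ubf$ is unipotent, its quotient is cohomologically trivial after the appropriate shift, and $!$-pushforward along this identification computes $\RGamma_c(Y(\dot{w}),\Lambda)$ as a $\Gbf^{\Frob}\times \Tbf^{w\Frob}$-module. Pulling back $E_\chi$ via $\beta_w$ amounts to selecting the block idempotent $e_\chi$ for the $\Tbf^{w\Frob}$-action, yielding the stated formula for $\ch(\Delta_{w,\chi})$ up to shifts.

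The costandard case follows by exactly the same argument with $j_{w,*}$ in place of $j_{w,!}$: the unipotent quotient remains cohomologically trivial, so $*$-pushforward gives $\RGamma(Y(\dot{w}),\Lambda)$ in place of $\RGamma_c$. The main obstacle is the identification carried out in the second paragraph: it is a sheaf-theoretic refinement of the classical Lang-type description of $X(w)$, and requires careful bookkeeping of the lift $\dot{w}$, the $\Ubf$-quotients on both sides of the correspondence, and the residual $\Tbf^{w\Frob}$-action, all at once.
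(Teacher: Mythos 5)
The paper does not prove this theorem; it imports it verbatim from the companion paper cited as \cite{EtevePaper1}. There is therefore no in-paper argument to compare against, so I will assess your proposal on its own terms.

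Your overall strategy is the right one, and it is the standard argument: base change $\beta^*j_{w,!}$ to $j'_{w,!}\beta_w^*$, then base change the correspondence along the Lang cover $\pt \to \pt/\Gbf^{\Frob} = \Gbf/\Ad_{\Frob}\Gbf$, identify the resulting geometry with a Deligne--Lusztig variety, and use the projectivity of $E_\chi$ to extract the block projector $e_\chi$. However, the geometric identification in your second paragraph is garbled. Writing $\Gbf/\Ad_{\Frob}\Bbf \cong \{(x,g\Bbf): g^{-1}x\Frob(g)\in\Bbf\}/\Ad_{\Frob}\Gbf$ and restricting to the $w$-stratum, the fiber product of $\BwB/\Ad_{\Frob}\Bbf$ with $\pt$ over $\Gbf/\Ad_{\Frob}\Gbf$ is \emph{exactly} the Deligne--Lusztig variety $X(w) = \{g\Bbf : g^{-1}\Frob(g)\in\BwB\}$; it is not ``a fibration with affine fibers over $Y(\dot{w})/(\Ubf\cap{^w}\Ubf)$.'' Indeed, $\Ubf\cap{^w}\Ubf$ does not act on $Y(\dot{w})$, so this phrase does not parse. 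What is true is that the base change of $\beta_w$ realizes a $(\Tbf^{w\Frob}\ltimes \Ubf_w)$-torsor over $X(w)$, whose $\Ubf_w$-quotient is the usual $\Tbf^{w\Frob}$-torsor $Y(\dot{w})\to X(w)$; the $\Ubf_w$-factor is what contributes the affine (cohomologically trivial up to shift) fiber. Since $\Tbf^{w\Frob}$ is abelian, $E_\chi = e_\chi\Lambda[\Tbf^{w\Frob}]$, and the associated local system on $X(w)$ is the direct summand $e_\chi\pi_!\Lambda$ of $\pi_!\Lambda$, $\pi : Y(\dot{w})\to X(w)$, which gives exactly $e_\chi\RGamma_c(Y(\dot{w}),\Lambda)$ after taking $\RGamma_c$.

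One further minor caveat: the costandard case is not literally ``the same argument.'' The base change $\beta^*j_{w,*}\cong j'_{w,*}\beta_w^*$ uses the smooth base change theorem (so the smoothness of $\beta$ is genuinely needed here, not just for dimension bookkeeping), and one then uses that $\alpha$ is proper so $\alpha_! = \alpha_*$, after which $(\alpha j'_w)_*$ base changed along the (\'etale) Lang cover gives $\RGamma(X(w),-)$ rather than $\RGamma_c$. These are small points but worth making explicit, since they are the only places where the dual computation genuinely differs from the standard one.
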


\begin{rque}
It follows from this theorem that $\Ocal^{\DL}$ is a very rich source of uniform functions. There is however a lot more information contained in this category than in the space of uniform class functions. 
\end{rque}

\begin{defi}
A tilting representation of ${\Gbf}^{\Frob}$ is a complex of representations of the form 
$$\ch(T) \in \DD(\pt/{\Gbf}^{\Frob}, \Lambda)$$
where $T \in \Ocal^{\DL}$ is a perverse tilting object.
\end{defi}

The next theorem has also been discovered independently by Zhu \cite{ZhuLLC}.

\begin{thm}[Theorem \ref{thm:TiltingRepsAreProjective}]\label{thm:TiltingIntro}
\begin{enumerate}
\item The collection of tilting representations generates the category $\DD(\pt/{\Gbf}^{\Frob}, \Lambda)$.
\item Tilting representations are concentrated in degree $0$ and are projective representations.  
\end{enumerate}
\end{thm}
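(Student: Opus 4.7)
The plan is to reduce all three assertions to statements about $\ch(\Delta_{w,\chi})$ using the $\Delta$-filtrations of tilting perverse sheaves guaranteed by the structure theorem on $\Ocal^{\DL}$, and then invoke the identification
$$\ch(\Delta_{w,\chi}) = e_{\chi}\RGamma_c(Y(\dot{w}), \Lambda)[\ell(w)]$$
from the previous theorem to transport classical results about Deligne--Lusztig varieties to statements about tilting representations.

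For (i), the structure theorem tells us that $\Ocal^{\DL}_{\tilt}$ generates $\Ocal^{\DL}$, so the thick subcategory of $\DD(\pt/{\Gbf}^{\Frob}, \Lambda)$ generated by tilting representations contains the full image $\ch(\Ocal^{\DL})$, and in particular contains every $e_{\chi}\RGamma_c(Y(\dot{w}), \Lambda)$. It remains to invoke the classical consequence of Deligne--Lusztig theory (in the spirit of Bonnafé--Rouquier) that this last family already generates $\DD(\pt/{\Gbf}^{\Frob}, \Lambda)$: every simple ${\Gbf}^{\Frob}$-module appears as a composition factor of some $H^*_c(Y(\dot{w}), \Lambda)_{\chi}$.

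For (ii), the standards $\Delta_{w,\chi}$ are compact in $\Ocal^{\DL}$ by the structure theorem, so any tilting $T$ is compact as an iterated extension of standards. It thus suffices to check that $\ch = \alpha_!\beta^*[\dim \Ubf]$ preserves compactness. The smooth pullback $\beta^*$ does so automatically, and the fibers of $\alpha$ are isomorphic to the proper scheme $\Gbf/\Bbf$, so $\alpha_!$ preserves compactness as well. Hence $\ch(T)$ is compact in $\DD(\pt/{\Gbf}^{\Frob}, \Lambda)$, i.e., a perfect complex of ${\Gbf}^{\Frob}$-representations.

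For (iii), the main input is the classical fact, due to Lusztig, Digne--Michel--Rouquier and Bonnafé--Rouquier, that for $q$ larger than an explicit bound depending on $\Wbf$ and on $\Lambda$, each complex $e_{\chi}\RGamma_c(Y(\dot{w}), \Lambda)$ is concentrated in cohomological degree $\ell(w)$ and yields a projective $\Lambda{\Gbf}^{\Frob}$-module. Equivalently, every $\ch(\Delta_{w,\chi})$ sits in degree $0$ and is projective. A $\Delta$-filtration of a tilting $T$ then produces a sequence of distinguished triangles $X_i \to X_{i+1} \to \ch(\Delta_{w_i,\chi_i})$, and an induction on the length of the filtration propagates both concentration in degree $0$ (by the long exact sequence in cohomology) and projectivity (the induced short exact sequences in $\Rep_{\Lambda}{\Gbf}^{\Frob}$ split because the right-hand terms are projective, so each $X_{i+1}$ is a direct sum of projectives). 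The main obstacle is making the bound on $q$ uniform across the three coefficient rings $\Qlb$, $\Zlb$ and $\Flb$, and in particular controlling the modular case by reducing modulo $\ell$ from the integral one.
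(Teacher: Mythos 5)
Your treatment of (ii) is essentially the paper's argument and is correct: $\ch$ has a continuous right adjoint (because $\alpha$ is smooth proper and $\beta$ is smooth), hence preserves compacts, and tiltings are compact in $\Ocal^{\DL}$.

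For (i) your reduction is the right first step, but the invoked ``classical fact'' is too weak to finish. Knowing that every simple $\Lambda{\Gbf}^{\Frob}$-module is a \emph{composition factor} of some $H^*_c(Y(\dot w),\Lambda)_\chi$ does not place those simples in the thick subcategory generated by the complexes $e_\chi\RGamma_c(Y(\dot w),\Lambda)$: thick subcategories are closed under direct summands, not under passage to subquotients (compare $k[x]/x^2$-modules, where $k$ is a composition factor of the ring but does not lie in $\Perf$). Over $\Qlb$ composition factors are direct summands so your argument closes, but over $\Flb$ or $\Zlb$ it does not. The paper instead uses the adjunction $\hc \dashv \ch$ together with the \emph{conservativity} of $\hc$ (Theorem \ref{thm:ConservativityAndDeligneLusztig}), which is exactly the statement that makes the image of $\ch$ a generating family and removes the dependence on the coefficient ring.

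Part (iii) has a genuine error. The ``classical fact'' you appeal to is false: for general $w$ and $\chi$ the complex $e_\chi\RGamma_c(Y(\dot w),\Lambda)$ is \emph{not} concentrated in a single degree, and its individual cohomology groups are \emph{not} projective $\Lambda{\Gbf}^{\Frob}$-modules. When $Y(\dot w)$ is affine smooth of dimension $\ell(w)$, Artin vanishing only gives $H^i_c(Y(\dot w),\Lambda)=0$ for $i<\ell(w)$ (and for $i>2\ell(w)$), so $\ch(\Delta_{w,\chi}) = e_\chi\RGamma_c(Y(\dot w),\Lambda)[\ell(w)]$ lives in degrees $[0,\ell(w)]$, not just degree $0$. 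Your induction on a $\Delta$-filtration therefore cannot get off the ground, and indeed a $\Delta$-filtration alone can never force concentration in degree $0$. The paper's argument uses \emph{both} filtrations: the $\Delta$-filtration together with $\ch(\Delta)\in\DD^{\geq 0}$ gives $\ch(T)\in\DD^{\geq 0}$, the $\nabla$-filtration together with $\ch(\nabla)=e_\chi\RGamma(Y(\dot w),\Lambda)[\ell(w)]\in\DD^{\leq 0}$ (Artin vanishing for $\RGamma$ of an affine) gives $\ch(T)\in\DD^{\leq 0}$, so $\ch(T)$ sits in the heart. Being a perfect complex concentrated in degree $0$ over the symmetric (hence self-injective) algebra $\Lambda[{\Gbf}^{\Frob}]$, it is then projective. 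Finally, the hypothesis on $q$ is simply that the Deligne--Lusztig varieties are affine, which holds for $q>h$ with $h$ the Coxeter number; no dependence on $\Lambda$ is needed.
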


These objects provide a particularly nice collection of projective generators of the category $\DD(\pt/{\Gbf}^{\Frob}, \Lambda)$. It should be noted that we could have defined their characters previously in a formal way but the existence of $\Ocal^{\DL}$ allows us to work with actual representations instead of their characters. The construction of these tilting representations is a key example of the benefits of working with $\Ocal^{\DL}$ instead of uniform characters as they arise in a non-trivial way from the gluing of the various strata of $\Ocal^{\DL}$. 

\subsubsection{A conjecture of Dudas--Malle}

Classical category $\mathcal{O}$ is equipped with a `mixed refinement' $\Ocal^{\mix}$ and there is an involutive automorphism of $\Ocal^{\mix}$ 
$$\kappa : \Ocal^{\mix} \to \Ocal^{\mix}$$ 
called Koszul duality which exchanges simple and tilting objects, we refer to \cite{BGS}, \cite{TiltingExercises}, \cite{BezrukavnikovYun}. 

In the context of representations of finite groups of Lie type there is an involutive automorphism of the representation ring $R({\Gbf}^{\Frob})$ of ${\Gbf}^{\Frob}$ given by Alvis--Curtis duality, see \cite{AlvisDuality}, \cite{CurtisDuality}, \cite{DeligneLusztigDuality1} :
$$d : R({\Gbf}^{\Frob}) \to R({\Gbf}^{\Frob}).$$
It is known that in $R({\Gbf}^{\Frob})$ there is an equality of characters \cite{DeligneLusztigDuality2}. 
$$d(R_w^{\theta}) = (-1)^{\ell(w)}R_w^{\theta}.$$
More generally, Lusztig, in \cite{LusztigBook}, has given a formula for the Alvis--Curtis dual of $[\IH(\overline{X(w)}, \Qlb)]$. We reinterpret his calculation as follows 

\begin{thm}[Corollary \ref{cor:Duality}]
There is an equality 
$$ d([\IH(\overline{X(w)}, \Qlb)]) = \pm d(\ch (T_w)) $$
where $T_w$ is the indecomposable tilting sheaf containing $\Delta_{w,1}$ and supported on the closure of $\BwB$. 
\end{thm}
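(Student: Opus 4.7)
The plan is to compute both sides of the equality as explicit linear combinations of the Deligne--Lusztig characters $R_y^1$ with $y \leq w$ and to check that they coincide up to a global sign; both expressions will ultimately be controlled by the same Kazhdan--Lusztig polynomials $P_{y,w}$ evaluated at $1$.

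First I would compute $d(\ch(T_w))$. Using the highest weight structure of $\Ocal^{\DL}$ from Theorem \ref{thm:StructureCatO}, fix a $\Delta$-filtration of $T_w$. Since $T_w$ is supported on the closure of $\BwB$ and lies in the unipotent block, the graded pieces are of the form $\Delta_{y,1}$ with $y \leq w$; write $m_{y,w} := [T_w : \Delta_{y,1}]$. By the theorem of \cite{EtevePaper1} recalled above, $\ch(\Delta_{y,1})$ is identified with $\RGamma_c(X(y), \Qlb)[\ell(y)]$ after extracting the trivial character of $\Tbf^{y\Frob}$. Passing to characters and applying Deligne--Lusztig duality in the form $d(R_y^1) = (-1)^{\ell(y)} R_y^1$ makes all signs from the $[\ell(y)]$-shifts cancel, so
$$d(\ch(T_w)) = \sum_{y \leq w} m_{y,w}\, R_y^1.$$
The crucial input is then the identification of $m_{y,w}$ with the Kazhdan--Lusztig value $P_{y,w}(1)$, via a BGG-reciprocity type statement $[T_w : \Delta_{y,1}] = [\nabla_{y,1} : \IC_{w,1}]$ in the highest weight heart of $\Ocal^{\DL}$ together with the comparison of decomposition numbers between $\Ocal^{\DL}$ and classical $\Ocal$ carried out in section \ref{sec:ComparisonCatO}.

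Next I would invoke Lusztig's computation in \cite{LusztigBook}, which decomposes the IC sheaf on the Schubert variety $\overline{\BwB/\Bbf}$ along Bruhat cells using the Kazhdan--Lusztig polynomials and pulls this decomposition back through the Lang map, yielding an expression of the shape $[\IH(\overline{X(w)}, \Qlb)] = \sum_{y \leq w} (-1)^{\ell(w) - \ell(y)}\, P_{y,w}(1)\, R_y^1$. Applying $d$ again absorbs the $(-1)^{\ell(y)}$ terms and produces $(-1)^{\ell(w)} \sum_{y \leq w} P_{y,w}(1)\, R_y^1$, which matches the formula obtained above up to the global sign $(-1)^{\ell(w)}$. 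The hard part of the argument is the identification $m_{y,w} = P_{y,w}(1)$: establishing BGG reciprocity inside $\Ocal^{\DL}$ and matching its decomposition numbers with those of classical category $\Ocal$ are substantial structural results, but they are already part of the machinery developed in the body of the paper, after which the conclusion is just a matter of bookkeeping perverse shifts and Deligne--Lusztig signs.
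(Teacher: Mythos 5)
Your overall strategy---expand both sides in the basis of Deligne--Lusztig characters $R_y^1$, identify the coefficients with Kazhdan--Lusztig numbers, and compare---is genuinely different from the paper's proof. The paper works at the level of the Hecke algebra: it quotes Lusztig's Theorem 3.8 ($\tr = \ch \circ b$) and Proposition 6.9 ($d\ch(\underline{\tilde{H}}_{w,1}) = \pm\tr(\underline{\tilde{H}}_{w,1})$), then uses the categorification isomorphism $\Hh \simeq K_0(\Ocal^{\DL,\mix,\unip})$, under which $\underline{\tilde{H}}_w \leftrightarrow [\IC^{\mix}_{w,1}]$ and $\underline{H}_w = b(\underline{\tilde{H}}_w) \leftrightarrow [T^{\mix}_{w,1}]$, to read the equality off directly. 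Your idea of unpacking this into an explicit sum over $R_y^1$ is plausible, but as written it has errors that lead to a false conclusion.

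First, the identification of tilting multiplicities. Your ``BGG reciprocity'' formula $[T_w : \Delta_{y,1}] = [\nabla_{y,1} : \IC_{w,1}]$ cannot hold: the right-hand side is a Jordan--H\"older multiplicity of $\IC_{w,1}$ inside $\nabla_{y,1}$, which vanishes whenever $y < w$ because $\IC_{w,1}$ has strictly larger support than $\nabla_{y,1}$. The correct (and much simpler) source of the identity $m_{y,w} = P_{y,w}(1)$ is Lemma \ref{lem:Categorification}: $[T^{\mix}_{w,1}] = \underline{H}_w = \sum_y h_{y,w}(v)H_y$ and $h_{y,w}(1) = P_{y,w}(1)$; neither Ringel duality nor the comparison of decomposition numbers in Section \ref{sec:ComparisonCatO} (which is about the $\Zlb \to \Qlb$ base change) is what is needed here.

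Second, and more seriously, the formula $[\IH(\overline{X(w)},\Qlb)] = \sum_{y\leq w}(-1)^{\ell(w)-\ell(y)}P_{y,w}(1)R_y^1$ has the wrong signs. The per-term $(-1)^{\ell(y)}$ coming from the shift $[\ell(y)]$ in $\ch(\Delta_{y,1}) = e_1\RGamma_c(Y(\dot y))[\ell(y)]$ cancels against the $(-1)^{\ell(w)-\ell(y)}$ coming from $\tilde{h}_{y,w}(1)$, leaving only a global sign: the correct statement is $[\IH(\overline{X(w)},\Qlb)] = \sum_{y\leq w} P_{y,w}(1)R_y^1$. One can check this already for $\Gbf = \SL_2$ and $w = s$, where $\overline{X(s)} = \P^1$ and $[\IH] = 2\cdot\mathbf{1} = R_e^1 + R_s^1$, not $-R_e^1 + R_s^1$. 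With the corrected formula, applying $d$ gives $d([\IH]) = \sum_y P_{y,w}(1)(-1)^{\ell(y)}R_y^1 = [\ch(T_w)]$, which is exactly the Corollary as stated in the body of the paper ($d(\ch(\IC_{w,1}^{\mix})) = \pm\ch(T_{w,1}^{\mix})$). By contrast, what you arrive at is $d([\IH]) = \pm d(\ch(T_w))$, which would force $[\IH] = \pm\ch(T_w)$; this is false, again already for $\SL_2$ (there $\ch(T_s) = R_e^1 - R_s^1 \neq \pm(R_e^1 + R_s^1)$). So the sign slip is not cosmetic: it turns a correct argument into a proof of a false identity. Note that the displayed statement you were given (the introduction version of Corollary \ref{cor:Duality}) carries a stray $d$ on the right-hand side and does not match the Corollary in the body; you should have been alerted to the problem by the fact that your conclusion implies $\ch(T_w) \simeq \pm\ch(\IC_{w,1})$, which is plainly untenable since tilting and simple objects have distinct classes in $K_0$.
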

The calculations of Lusztig and our theorem admit refinements which take into account the weights of the Frobenius endomorphism acting on the cohomology (and intersection homology) of the Deligne--Lusztig varieties. This calculation suggests some deep interplay between Koszul duality for category $\Ocal$ and Alvis--Curtis duality. 

Finally, let $\delta > 0$ be the smallest integer such that $(\Gbf ,\Frob^{\delta})$ is split. Then $\Frob^{\delta}$ acts on the cohomology of $[\IH(\overline{X(w)}, \Qlb)]$. Let $\lambda \in \Qlb$ and denote by $[\IH(\overline{X(w)}, \Qlb)[\lambda]]$ the character of the generalized eigenspace of $\Frob^{\delta}$ on $\IH(\overline{X(w)}, \Qlb)$ for the eigenvalue $\lambda$.  Let $\overline{\lambda} \in \Flb$ and let $[\IH(\overline{X(w)}, \Qlb)[\overline{\lambda}]]$ be the character of 
$$[\IH(\overline{X(w)}, \Qlb)[\overline{\lambda}]] = \bigoplus_{\lambda} [\IH(\overline{X(w)}, \Qlb)[\lambda]]$$
where $\lambda$ ranges through the set of element of $\Zlb$ that reduce to $\overline{\lambda}$ in $\Flb$. 

\begin{thm}[Theorem \ref{thm:ConjDudasMalle}, \protect{\cite[Conjecture 1.2]{DudasMalle}}]\label{thm:DudasMalleIntro}
There exists $\ell_0$ depending only on $\Gbf$ (not on $\Frob$) such that for all $\ell \geq \ell_0$,  $\overline{\lambda} \in \Flb$, $w \in \Wbf$ the character of ${\Gbf}^{\Frob}$
$$d([\IH(\overline{X(w)}, \Qlb)[\overline{\lambda}]])$$
is the unipotent part of a projective $\Zlb$-character. 
\end{thm}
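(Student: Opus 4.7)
The plan is to combine Corollary \ref{cor:Duality} with the integral projectivity of tilting representations (Theorem \ref{thm:TiltingIntro}) and an eigenvalue-decomposition argument for $\Frob^{\delta}$. First, I would upgrade Corollary \ref{cor:Duality} to an identity that tracks the $\Frob^{\delta}$-action on both sides. The underlying identification $d([\IH(\overline{X(w)}, \Qlb)]) = \pm d(\ch(T_w))$ comes from a Koszul-type duality internal to the mixed category $\Ocal^{\DL, \mix}$ of section \ref{sec:MixedCategoryO}; both sides carry natural $\Frob^{\delta}$-actions coming from the split structure over $\F_{q^{\delta}}$, and the construction is compatible with these actions. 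Hence the identity refines to
\[
d([\IH(\overline{X(w)}, \Qlb)[\overline{\lambda}]]) = \pm d(\ch(T_w)[\overline{\lambda}])
\]
where $\ch(T_w)[\overline{\lambda}]$ is the sum of generalized eigenspaces of $\Frob^{\delta}$ on $\ch(T_w)$ for eigenvalues in $\Zlb$ reducing to $\overline{\lambda}$.

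Next, I would promote the right-hand side to an integral projective $\Zlb[\Gbf^{\Frob}]$-character. By the structure theorem for $\Ocal^{\DL}$ and Krull--Schmidt, for $\ell$ large the indecomposable tilting $T_w$ lifts canonically to an integral tilting object $T_w^{\Zlb} \in \Ocal^{\DL}_{\Zlb}$; by Theorem \ref{thm:TiltingIntro}(3) applied with $\Lambda = \Zlb$, for $q$ large the complex $\ch(T_w^{\Zlb})$ is an honest projective $\Zlb[\Gbf^{\Frob}]$-module concentrated in degree $0$. The $\Frob^{\delta}$-eigenvalues on $\ch(T_w^{\Zlb}) \otimes_{\Zlb} \Qlb$ are Weil numbers dictated by Kazhdan--Lusztig combinatorics of $\Wbf$ together with the standard objects $\Delta_{w,1}$; in particular, their set of possible values is bounded in terms of the root datum of $\Gbf$ alone, independent of $\Frob$. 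Choosing $\ell_0$ large enough (depending only on $\Gbf$) that distinct eigenvalues have distinct reductions modulo $\ell$ for every $\ell \geq \ell_0$, the idempotent projecting onto $\ch(T_w)[\overline{\lambda}]$ lies in $\Zlb[\Frob^{\delta}]$; applied to $\ch(T_w^{\Zlb})$ it carves out a direct summand that is still projective over $\Zlb[\Gbf^{\Frob}]$ and whose character is $\ch(T_w)[\overline{\lambda}]$.

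Finally, I would invoke the classical fact that Alvis--Curtis duality $d$ takes (virtual) projective $\Zlb[\Gbf^{\Frob}]$-characters to (virtual) projective $\Zlb[\Gbf^{\Frob}]$-characters, up to an overall sign depending on the semisimple rank. Since $T_w$ is built from the trivial character $\chi = 1$ of $\Tbf^{w\Frob}$, the summands $\ch(T_w)[\overline{\lambda}]$ are supported on unipotent characters, so applying $d$ and reading off the unipotent part gives exactly the assertion that $d([\IH(\overline{X(w)}, \Qlb)[\overline{\lambda}]])$ is the unipotent part of a projective $\Zlb$-character. The main obstacle is the first step: making the refined, $\Frob^{\delta}$-equivariant version of Corollary \ref{cor:Duality} precise, since this requires tracking weights through the Koszul-type comparison carried out in section \ref{sec:MixedCategoryO}. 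Once that refinement is in hand, the integrality step is routine and the uniformity of $\ell_0$ in $\Frob$ follows because the eigenvalues involved are controlled by Kazhdan--Lusztig polynomials for $\Wbf$, which depend only on $\Gbf$.
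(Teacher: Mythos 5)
Your overall strategy is the right one — compare $d(\ch(\IC_{w,1}^{\mix}))$ with $\ch(T_{w,1}^{\mix})$ via Corollary \ref{cor:Duality}, invoke Theorem \ref{thm:TiltingRepsAreProjective} for integral projectivity, and decompose by $\Frob^{\delta}$-eigenvalues — but two essential inputs are missing, and both are load-bearing.

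First, the eigenvalue-by-eigenvalue refinement of Corollary \ref{cor:Duality} is \emph{not} a matter of ``the construction being compatible with $\Frob^{\delta}$-actions.'' Alvis--Curtis duality $d$ is an operation on the representation ring, and there is no a priori reason it should commute with projection onto a Frobenius eigenspace. The paper handles this in two steps: the $\Z[v^{\pm1}]$-linearity of $\gamma\circ\ch$ lets one isolate the half-integer powers $q^{i/2}$, and then, to split off the individual roots of unity $\zeta$, one needs Lusztig's theorem that each unipotent irreducible $\rho$ carries a root of unity $\zeta_\rho$ governing all Frobenius eigenvalues on the $\rho$-isotypic part of Deligne--Lusztig cohomology, together with the nontrivial fact that $\zeta_{\pm d(\rho)}=\zeta_\rho$ (\cite[3.33]{LusztigBook77}, \cite[Lemma 4.2]{GeckMalle}). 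You correctly flag this as the main obstacle, but a proof cannot leave it as an obstacle; this Alvis--Curtis invariance of $\zeta_\rho$ is precisely the theorem that makes the eigenvalue decomposition cooperate with $d$.

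Second, your reason for choosing $\ell_0$ large is misidentified and hides the actual issue. The eigenvalues $q^{i/2}\zeta$ are always $\ell$-units, so their reductions mod $\ell$ are defined for every $\ell\neq p$; ``separating eigenvalues mod $\ell$'' is not what fails for small $\ell$. What genuinely requires $\ell\geq\ell_0$ is the coincidence of the Kazhdan--Lusztig and $\ell$-Kazhdan--Lusztig bases (Lemma \ref{lem:IncomposabilityLargeL}), which forces the decomposition numbers $n_{v,w,1}$ of Lemma \ref{lem:Decomp} to vanish. Without this, the $\ell$-inversion $T_{w,1}^{\Zlb}[\tfrac{1}{\ell}]$ contains extra \emph{unipotent} summands $T_{v,1}^{\Qlb}$ with $v<w$, so the unipotent part of $\ch(T_{w,1}^{\Zlb})[\overline{\lambda}]$ would not be $\ch(T_{w,1}^{\Qlb,\mix})[\overline{\lambda}]$ and the identity you need would fail. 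Your proposal does not engage with this vanishing at all, and it is what makes the integral construction match up with the $\Qlb$-side. Aside from these gaps the scaffolding is correct, and even the direction of travel (tilting on the $\Zlb$-side, descend the Weil structure using integrality of the eigenvalues, read off the unipotent part) matches the paper.
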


\begin{rque}
The hypothesis on $\ell$ in the theorem is to ensure that Kazhdan--Lusztig polynomials and $\ell$-Kazhdan--Lusztig polynomials agree. This ensure that if $T^{\Zlb}_w$ is a $\Zlb$ tilting representation, then the unipotent part of $T^{\Zlb}_w[\frac{1}{\ell}]$ is $T_w^{\Qlb}$ whose character is known. In general, the unipotent part of $T^{\Zlb}_w[\frac{1}{\ell}]$ decomposes as 
$$T^{\Qlb}_w \oplus_{v < w} T_{v}^{\Qlb, \oplus n_{w,v}}$$
where $n_{w,v} \in \mathbb{N}$ are certain multiplicities. 
\end{rque}

\subsubsection{Applications to decomposition numbers}

The conjecture of \cite{DudasMalle}, now theorem \ref{thm:ConjDudasMalle} was assumed to prove certain results concerning the unipotent decomposition numbers of ${\Gbf}^{\Frob}$. Now that this conjecture is established let us mention a few consequences. Let us fix $q$ and $\ell$ such that theorem \ref{thm:ConjDudasMalle} holds. 

\begin{enumerate}
\item Assume that condition $(ii)$ of \cite[Proposition 2.1]{DudasMalle} holds. Then the unipotent decomposition matrix of ${\Gbf}^{\Frob}$ has unitriangular shape and its entries do not depend on $q$ or $\ell$. Note that the unitriangularity has also been established by other methods in \cite{BrunatDudasTaylor}. 
\item The conjecture of \cite{DudasMalle} is assumed in calculations of \cite{DudasMalleharishChandra} and \cite{DudasMalleDecompMatrix} and is used to show that certain entries vanish. This now holds under our standing assumptions on $q$ and $\ell$. 
\end{enumerate}

\subsection{Notation and conventions}\label{sec:Notations}
We fix $\ell \neq p$ a prime and denote by $\Lambda \in \{\Qlb, \Zlb, \Flb\}$ a coefficient ring. For $X$ an algebraic stack, we denote by $\DD(X, \Lambda)$ the category of ind-constructible sheaves of $\Lambda$-modules as defined in \cite{HemoRicharzScholbach}. If $X$ is a finite type scheme then we have, 
$$\DD(X, \Lambda) = \Ind(\DD^b_c(X,\Lambda))$$
the ind-completion of the usual derived category of constructible sheaves on $X$. If the coefficient ring is clear, we will write $\DD(X, \Lambda) = \DD(X)$. Let $f : X \to Y$ be a morphism of stacks, whenever they are defined, we will denote by $f_*, f_!,f^*,f^!$ the usual pushforward and pullbacks. We fix a square root of $p$ in $\Zlb$. Whenever $X$ is a stack defined over $\Fq$, this defines a half Tate twist $(\frac{1}{2})$ on sheaves on $X_{\Fqb}$ with a Weil-structure. Given a stack $X$ with a Frobenius $\Frob : X \to X$, we denote by $\DD^b_c(X)^{\Weil}$ the category of Weil sheaves on $X$, i.e. the category of pairs $(A, \phi)$ where $A \in \DD^b_c(X)$ and $\phi : \Frob^*A \xrightarrow{\sim} A$.

 We denote by $\Hom(-,-)$ the derived functor of endomorphism and we denote by $\Hom^i$ its $i$-th cohomology groups and by $\End(X)$ the functor $\Hom(X,X)$ (where everything is derived). 

All categories are considered as $\infty$-categories but the $\infty$-part plays no essential role, all $\infty$-categories in this paper are either stable $\infty$-categories, as in \cite{LurieHA} or abelian/additive usual $1$-categories, the unfamiliar reader should ignore these technicalities and only consider triangulated categories. If $C$ is equipped with a $t$-structure, we denote by $C^{\heartsuit}$ its heart. We recall that an object $c \in C$ is compact if and only if 
$$\Hom_C(c,-)$$
commutes with arbitrary direct sums. In particular for a ring $A$, the compact objects of $\DD(A)$, the full derived category of $A$, are exactly the perfect complexes. They form a category which we denote by $\Perf(A) \subset \DD(A)$. A complex $K \in \DD(A)$ lies in $\Perf(A)$ if and only if it is quasi-isomorphic to a bounded complex of projectives of finite type.

\subsection{Acknowledgments}

The author thanks Olivier Dudas for suggesting the problem and for comments on a draft of this paper. We thank Jean-François Dat and Olivier Dudas for their support during the preparation of this paper. We thank Gunter Malle for doing a careful reading of a draft of this paper. This paper was written while the author was a guest at the Max Planck Institute for mathematics in Bonn. The author thanks Cédric Bonnafé and Gerhard Hiss for helpful discussions. The author thanks Xinwen Zhu for comments on a previous version of this paper. 

\section{Deligne--Lusztig category $\mathcal{O}$}\label{sec:DLCatO}

As in the introduction, we will consider the algebraic stack 
$$\frac{\Ubf \backslash \Gbf/\Ubf}{\Ad_{\Frob}\Tbf}$$ 
which we call the horocycle stack. 
We denote by $\Ocal^{\DL}_{\Lambda}$ the category $\DD(\frac{\Ubf \backslash \Gbf/\Ubf}{\Ad_{\Frob}\Tbf}, \Lambda)$. We call this category the Deligne--Lusztig category $\Ocal$. The goal of this section is to discuss the homological structure of this category. When the coefficients are clear, we will drop the index $\Lambda$ from the notation. We denote by $\Ocal_{\cons}^{\DL}$ the full subcategory of constructible sheaves. 

\subsection{Stratification}

We equip the horocycle stack with its Bruhat stratification, that is 
$$\frac{\Ubf \backslash \Gbf/\Ubf}{\Ad_{\Frob}\Tbf} = \bigsqcup_{w \in \Wbf} \frac{\Ubf \backslash \BwB /\Ubf}{\Ad_{\Frob}\Tbf}.$$
We denote by $j_w : \frac{\Ubf \backslash \BwB /\Ubf}{\Ad_{\Frob}\Tbf} \to \frac{\Ubf \backslash \Gbf/\Ubf}{\Ad_{\Frob}\Tbf}$ the inclusion of the stratum $w$, note that this is an affine immersion. 

\begin{lem}
Let $\dot{w} \in \Nbf(\Tbf)$ be a lift of $w$. There is an isomorphism of stacks 
$$\frac{\Ubf \backslash \BwB /\Ubf}{\Ad_{\Frob}\Tbf} = \pt/({\Tbf}^{w\Frob} \ltimes \Ubf_w)$$
where $\Ubf_w = \Ubf \cap \Ad(\dot{w})(\Ubf)$. 
\end{lem}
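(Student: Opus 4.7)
I will prove the isomorphism by showing that the natural map $\pt \to \frac{\Ubf \backslash \BwB/\Ubf}{\Ad_{\Frob}\Tbf}$ given by $\ast \mapsto \dot{w}$ is surjective (in the fppf/étale sense) with automorphism group exactly $\Tbf^{w\Frob} \ltimes \Ubf_w$.

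\textbf{Setup.} The stack in question is the quotient of $\BwB$ by the group $G = \Ubf \times \Ubf \times \Tbf$ acting via
\[
(u_1, u_2, s) \cdot x = u_1 s\, x\, u_2^{-1} \Frob(s)^{-1},
\]
so it suffices to compute the $G$-orbit of $\dot{w}$ and its stabilizer.

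\textbf{Surjectivity.} The Bruhat decomposition gives $\BwB = \Ubf\,\dot{w}\,\Tbf\,\Ubf$, so an arbitrary point has the form $u_1\,\dot{w}\,t\,u_2$. Using $s \dot{w} = \dot{w} \cdot w^{-1}(s)$, one computes
\[
(u_1, u_2', s) \cdot \dot{w} \;=\; u_1\,\dot{w}\,\phi(s)\,\Ad(\Frob(s))(u_2'^{-1}), \qquad \phi(s) := w^{-1}(s)\,\Frob(s)^{-1}.
\]
Thus to hit $u_1 \dot{w} t u_2$ we need to solve $\phi(s) = t$. The map $\phi \colon \Tbf \to \Tbf$ is a group homomorphism (as $\Tbf$ is commutative); its kernel is $\{s : w^{-1}(s) = \Frob(s)\} = \Tbf^{w\Frob}$, and its differential is $\mathrm{id}$ on $\Lie \Tbf$ since $d\Frob = 0$. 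Hence $\phi$ is étale and surjective (this is Lang's theorem applied to the Frobenius-like isogeny $w\Frob$), so every point of $\BwB$ lies in the $G$-orbit of $\dot{w}$.

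\textbf{Stabilizer.} Setting $(u_1, u_2, s) \cdot \dot{w} = \dot{w}$ and rearranging yields
\[
\dot{w}^{-1} u_1 \dot{w} \;=\; \Ad(\Frob(s))(u_2)\cdot \phi(s)^{-1}.
\]
For the left-hand side to lie in $\Bbf$ one needs $u_1 \in \Ubf \cap \Ad(\dot{w})\Ubf = \Ubf_w$; comparing torus parts on both sides then forces $\phi(s) = 1$, i.e., $s \in \Tbf^{w\Frob}$; and then $u_2 = \Ad(\Frob(s)^{-1})(\dot{w}^{-1} u_1 \dot{w})$ is determined. So the stabilizer is parametrized by pairs $(u_1, s) \in \Ubf_w \times \Tbf^{w\Frob}$.

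\textbf{Group structure.} A direct composition in $G$ gives
\[
(u_1, u_2, s)\cdot(u_1', u_2', s') \;=\; \bigl(u_1 \Ad(s)(u_1'),\; *\;,\; s s'\bigr),
\]
and restricted to the stabilizer this reads $(u_1, s)(u_1', s') = (u_1 \Ad(s)(u_1'), s s')$, which is exactly the multiplication law of the semidirect product $\Tbf^{w\Frob} \ltimes \Ubf_w$, where $\Tbf^{w\Frob}$ acts on $\Ubf_w$ by the adjoint action (well-defined since $\Tbf$ normalizes each root subgroup and preserves $R^+_w$).

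\textbf{Main obstacle and conclusion.} The only non-formal step is the surjectivity of $\phi$, for which one must argue that $w\Frob$ is a purely inseparable isogeny of $\Tbf$ (some power of it is a genuine $\Fqb$-Frobenius, since $w$ has finite order in $\Wbf$ and $\Frob$ has such a power) so that Lang's theorem applies. The rest is a bookkeeping exercise in the Bruhat decomposition. Combining surjectivity and the stabilizer computation yields the desired isomorphism
\[
\frac{\Ubf \backslash \BwB /\Ubf}{\Ad_{\Frob}\Tbf} \;=\; \pt/(\Tbf^{w\Frob} \ltimes \Ubf_w).
\]
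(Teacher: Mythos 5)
Your proof is correct, and since the paper defers to a citation (\cite[Lemma 3.2.4]{EtevePaper1}) rather than giving an argument, there is nothing to compare against directly; the route you take — parametrize the stratum as an orbit of $(\Ubf \times \Ubf) \rtimes \Tbf$, establish transitivity via the Bruhat decomposition and Lang's theorem for the Steinberg endomorphism $w\Frob$, and compute the stabilizer of $\dot{w}$ — is the natural one. One small inaccuracy: the differential of $\phi(s) = w^{-1}(s)\Frob(s)^{-1}$ at the identity is not $\mathrm{id}$ but the action of $w^{-1}$ on $\Lie \Tbf$ (the $\Frob$-term does vanish, but the $w^{-1}$-term contributes $d(w^{-1})$, which is a nontrivial linear automorphism in general); this is still invertible, so $\phi$ is still étale and your conclusion stands. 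In the stabilizer step it would also be worth stating explicitly why $\dot{w}^{-1}u_1\dot{w}\in\Bbf$ forces $\dot{w}^{-1}u_1\dot{w}\in\Ubf$ and hence the torus part $\phi(s)$ to be trivial: one uses that $\Ad(\dot{w}^{-1})(\Ubf)$ is the direct span of its intersections with $\Ubf$ and $\Ubf^-$, together with $\Ubf^- \cap \Bbf = \{1\}$, so $\Ad(\dot{w}^{-1})(\Ubf)\cap\Bbf = \Ad(\dot{w}^{-1})(\Ubf)\cap\Ubf = \Ad(\dot{w}^{-1})(\Ubf_w)$. With that spelled out the argument is complete.
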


\begin{proof}
See \cite[Lemma 3.2.4]{EtevePaper1}.
\end{proof}

Since $\Ubf_w$ is a connected unipotent group, there is an equivalence of categories
$$\DD(\pt/({\Tbf}^{w\Frob} \rtimes \Ubf_w)) = \DD(\Rep_{\Lambda} {\Tbf}^{w\Frob})$$
see \cite[Section 3.2]{EtevePaper1} for a discussion on the normalization of this equivalence. 
This category decomposes into blocks 
$$\DD(\Rep_{\Lambda} {\Tbf}^{w\Frob}) = \oplus_{\chi} \DD^{\chi}(\Rep_{\Lambda} {\Tbf}^{w\Frob})$$
where $\chi$ ranges through the irreducible characters of ${\Tbf}^{w\Frob}$ over $\Qlb$ if $\Lambda = \Qlb$ and over $\Flb$ is $\Lambda = \Flb, \Zlb$. The unipotent block (with respect to $\Lambda$) is the block containing the trivial representation. This is usually also called the principal block.   

Let $\rho$ be a representation of ${\Tbf}^{w\Frob}$, we denote by 
\begin{enumerate}
\item $\Delta_w(\rho) = j_{w,!}\rho[\ell(w)](\frac{\ell(w)}{2})$,
\item $\nabla_w(\rho) = j_{w,*}\rho[\ell(w)](\frac{\ell(w)}{2})$.
\end{enumerate}
Since the $j_w$ are affine immersions, these sheaves are perverse sheaves in $\Ocal^{\DL}$. We have indicated the Tate twist in this definition in anticipation of section \ref{sec:MixedCategoryO}, they will not play a role before this section.

For a character $\chi : {\Tbf}^{w\Frob} \to \Lambda^{\times}$ of ${\Tbf}^{w\Frob}$, we denote by $E_{\chi}$ the projective cover of $\chi$ and by 
\begin{enumerate}
\item $\Delta_{w, \chi} = \Delta_{w}(E_{\chi})$,
\item $\nabla_{w,\chi} = \nabla_w(E_{\chi})$. 
\end{enumerate}
The objects $\Delta_{w,\chi}$ and $\nabla_{w, \chi}$ are called the standard and costandard objects respectively. Note that if $\Lambda = \Qlb$, then $\chi = E_{\chi}$. 

Finally, we will also denote for a representation $\rho$ of ${\Tbf}^{w\Frob}$
$$\IC_{w}(\rho) = j_{w,!*}\rho[\ell(w)](\frac{\ell(w)}{2}).$$

\begin{prop}
\begin{enumerate}
\item The category $\Ocal^{\DL, \heartsuit}$ is noetherian. 
\item If $\Lambda$ is a field, it is also Artinian and has finitely many irreducible objects which are the $\IC_{w}(\chi)$ where $\chi$ ranges through the set of irreducible representations of ${\Tbf}^{w\Frob}$. 
\end{enumerate}
\end{prop}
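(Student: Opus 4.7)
The plan is to reduce both statements to the known representation theory of the finite groups $\Tbf^{w\Frob}$ by induction on the Bruhat stratification. On each stratum $S_w = \frac{\Ubf\backslash \BwB/\Ubf}{\Ad_\Frob \Tbf}$, the $t$-exact equivalence $\DD(S_w, \Lambda) = \DD(\Rep_\Lambda \Tbf^{w\Frob})$ recalled from the preceding lemma identifies $\Ocal^{\DL, \heartsuit}_{|S_w}$ with $\Rep_\Lambda^{\heartsuit}\Tbf^{w\Frob}$ (after correcting by the shift $[\ell(w)]$ that appears in the definition of $\IC_w(\rho)$, $\Delta_w(\rho)$ and $\nabla_w(\rho)$). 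Since $\Tbf^{w\Frob}$ is a finite group and $\Lambda$ is noetherian, $\Lambda[\Tbf^{w\Frob}]$ is a noetherian ring, so the local heart is a noetherian abelian category; if moreover $\Lambda$ is a field then it is artinian with finitely many simple objects, namely the irreducible $\Lambda$-representations of $\Tbf^{w\Frob}$. The finiteness of $\Wbf$ will let a finite induction glue these local facts into global ones.

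For the induction, I would order the strata $w_1, \ldots, w_N$ refining the Bruhat order so that each $Z_k := \bigsqcup_{i \leq k} S_{w_i}$ is closed in the horocycle stack. Write $i_{k-1}$ for the closed inclusion $Z_{k-1} \hookrightarrow Z_k$ and keep the notation $j_{w_k}$ for the open complement. Because $j_{w_k}$ is an affine immersion, both $j_{w_k,!}$ and $j_{w_k,*}$ are $t$-exact for the perverse $t$-structure, and the recollement $(i_{k-1,*}, j_{w_k,!}, j_{w_k,*})$ yields, for any $M \in \Ocal^{\DL, \heartsuit}_{|Z_k}$, a canonical filtration with graded pieces supported on $Z_{k-1}$ and on $S_{w_k}$. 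Given an ascending chain $(M^{(n)})$, the restrictions $j_{w_k}^* M^{(n)}$ stabilize by the noetherian property on the stratum, and the perverse truncations supported on $Z_{k-1}$ stabilize by the inductive hypothesis on $Z_{k-1}$; a standard five-lemma type argument then forces $(M^{(n)})$ itself to stabilize. Starting from the empty case $k=0$, this gives (i).

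For (ii), the very same induction carried out with descending chains yields the artinian statement once $\Lambda$ is a field, since then both the inductive hypothesis and the local input on $S_{w_k}$ provide artinianity. The classification of simples is then the usual recollement dichotomy: a simple object of $\Ocal^{\DL, \heartsuit}_{|Z_k}$ is either killed by $j_{w_k}^*$, in which case it is the pushforward of a simple on $Z_{k-1}$ and handled by induction, or its restriction $j_{w_k}^*$ is a simple object of $\Rep_\Lambda^{\heartsuit}\Tbf^{w_k\Frob}$, i.e.\ an irreducible character $\chi$, in which case the object must coincide with the intermediate extension $\IC_{w_k}(\chi) = j_{w_k,!*}\chi[\ell(w_k)](\tfrac{\ell(w_k)}{2})$. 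Finiteness of the set of simples is immediate from the finiteness of $\Wbf$ and of $\Irr_\Lambda \Tbf^{w\Frob}$. I do not expect any genuine obstacle: the only real input beyond standard recollement formalism is the $t$-exactness of the stratum-wise equivalence and the matching of perverse shifts, both of which are built into the normalization imported from \cite{EtevePaper1}.
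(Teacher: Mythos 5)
Your argument is correct and is, in substance, exactly the proof of the result the paper cites here, namely \cite[Th\'eor\`eme 4.3.1]{BBD}: you have simply unrolled BBD's noetherian induction on strata, with the useful extra care (implicit in the paper's one-line citation) that over $\Zlb$ the strata only contribute noetherianity of $\Rep_{\Zlb}\Tbf^{w\Frob}$, whereas over a field they contribute finite length, which is precisely why the proposition separates (i) from (ii). The one place worth sharpening is the inductive noetherian step: the phrase ``perverse truncations supported on $Z_{k-1}$'' is ambiguous (the canonical sub $i_*{}^pH^0(i^!M^{(n)})$ alone does not feed a five-lemma argument), so instead fix $N$ with $j^*_{w_k}M^{(n)}=j^*_{w_k}M^{(N)}$ for all $n\ge N$, note that the quotients $M^{(n)}/M^{(N)}$ are then perverse sheaves killed by $j^*_{w_k}$ and hence of the form $i_{k-1,*}R^{(n)}$ with $R^{(n)}$ an ascending chain in $\Ocal^{\DL,\heartsuit}_{|Z_{k-1}}$, and conclude from the inductive hypothesis together with the five lemma applied to $0\to M^{(N)}\to M^{(n)}\to M^{(n)}/M^{(N)}\to 0$.
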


\begin{proof}
This is a direct application of \cite[Theorem 4.3.1]{BBD}.
\end{proof}

\begin{prop}\label{prop:compactGenCatO}
The category $\Ocal^{\DL}$ is compactly generated. The compact objects are exactly the sheaves that are constructible and compact when $*$-pullbacked (equivalently $!$-pullbacked) to all strata. In particular, the standard and costandard objects are compact. 
\end{prop}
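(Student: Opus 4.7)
My approach exploits the finite Bruhat stratification $X = \frac{\Ubf\backslash\Gbf/\Ubf}{\Ad_\Frob\Tbf} = \bigsqcup_{w \in \Wbf} X_w$, where $X_w = \frac{\Ubf\backslash\BwB/\Ubf}{\Ad_\Frob\Tbf}$, together with the formal theory of compact generation for recollements of sheaf categories.

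First I analyze a single stratum. By the preceding lemma, $X_w \simeq \pt/({\Tbf}^{w\Frob}\ltimes \Ubf_w)$; since $\Ubf_w$ is connected unipotent, the category $\DD(X_w,\Lambda)$ identifies with $\DD(\Rep_\Lambda {\Tbf}^{w\Frob})$, i.e.\ the derived category of modules over the finite $\Lambda$-algebra $\Lambda[{\Tbf}^{w\Frob}]$. This is compactly generated: the regular representation is a compact generator, and, since the $E_\chi$ are the indecomposable direct summands of the regular representation, the family $\{E_\chi\}_\chi$ is a set of compact generators; the compact objects are exactly the perfect complexes.

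Next I assemble the strata. The finite Bruhat stratification provides an iterated recollement whose successive quotients are the $\DD(X_w,\Lambda)$. A standard formal argument then yields the following: if each stratum's sheaf category is compactly generated and each $j_{w,!}$ preserves compact objects, then $\DD(X,\Lambda)$ is compactly generated by $\{j_{w,!}E_\chi\}_{w,\chi}$, and its compact objects are precisely the constructible sheaves $K$ whose $*$-pullback $j_w^*K$ is compact in $\DD(X_w,\Lambda)$ for every $w$. The equivalent characterization via $j_w^!$-pullbacks then follows by dévissage along the closed/open pieces of the stratification, using that constructible objects on a stratum sit in an exact triangle between $j^*$ and $j^!$ restrictions up to shift.

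The remaining input is that the immersions $j_w$ preserve compactness both for $j_{w,!}$ and $j_{w,*}$. For $j_{w,!}$ this amounts to continuity of the right adjoint $j_w^!$, which is standard for locally closed immersions of finite type between algebraic stacks in the ind-constructible formalism of Hemo--Richarz--Scholbach; this already yields compactness of $\Delta_{w,\chi} = j_{w,!}E_\chi[\ell(w)]$. For $j_{w,*}$, the essential geometric input is that each $j_w$ is an \emph{affine} immersion, so that $j_{w,*}$ has finite cohomological amplitude and commutes with filtered colimits; hence it also preserves compactness, giving compactness of $\nabla_{w,\chi} = j_{w,*}E_\chi[\ell(w)]$.

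The main obstacle I expect is purely categorical bookkeeping: verifying that the recollement of $\DD(X,\Lambda)$ along the Bruhat stratification interacts correctly with compact generation in the ind-constructible setting for algebraic stacks, and in particular that the affineness of $j_w$ ensures $j_{w,*}$ preserves compact objects. Once these formal inputs are in place, compact generation, the characterization of compact objects via stratumwise compactness, and the compactness of the standard and costandard sheaves all follow from the general machinery applied to the compact generators $E_\chi$ of $\DD(\Rep_\Lambda {\Tbf}^{w\Frob})$.
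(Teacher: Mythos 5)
Your overall strategy is the same as the paper's (reduce to the single stratum $\DD(\Rep_\Lambda \Tbf^{w\Frob})$, use the finite Bruhat recollement, and note $j_{w,!}$ preserves compacts because $j_w^!$ is continuous), but there is a genuine gap in the step where you claim $j_{w,*}$ preserves compact objects. You argue that affineness makes $j_{w,*}$ of finite cohomological amplitude and continuous, ``hence it also preserves compactness.'' This inference is a non-sequitur: a functor $F$ preserves compacts when its \emph{right} adjoint is continuous. The continuity of $j_{w,*}$ itself is the condition showing its \emph{left} adjoint $j_w^*$ preserves compacts, which is a different statement. Nor can one fall back on ``$j_{w,*}$ preserves constructibility and has bounded amplitude, hence preserves compacts'': the paper explicitly warns (in the remark after the proposition) that for $\Lambda \in \{\Zlb,\Flb\}$ the compact objects of $\Ocal^{\DL}$ are a strictly smaller class than the constructible ones, already for the single stratum $\pt/\Tbf^{w\Frob}$, so constructibility arguments do not suffice.

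The paper instead proves compactness of the costandard objects last, after establishing the stratumwise criterion: a sheaf is compact iff it is constructible and its $j_w^*$- and $j_w^!$-restrictions are compact (perfect) on every stratum, proven by a dévissage peeling off open strata. For $\nabla_{w,\chi} = j_{w,*}E_\chi[\ell(w)]$, compactness of the stratumwise restrictions is reduced to the standard case by applying Verdier duality \emph{inside} the stratum categories $\DD(\Rep_\Lambda \Tbf^{w\Frob})$, where perfect complexes are stable under $\mathbb{D}$. Your dévissage remark for the ``equivalently $!$-pullbacked'' clause gestures in the right direction but is likewise vague; the cleaner argument, used by the paper, is again the stability of compacts under Verdier duality stratumwise, which turns $j_w^!$-compactness into $j_w^*$-compactness (and vice versa). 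I'd suggest reworking the $j_{w,*}$ step to go through the stratumwise criterion plus stratumwise Verdier duality rather than trying to show $j_{w,*}$ preserves compacts directly.
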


We denote by $\Ocal^{\DL, \omega}$ the full subcategory of compact objects.

\begin{rque}
The reader should note that if $\Lambda \in \{\Zlb, \Flb\}$ then the inclusion $\Ocal^{\DL, \omega} \subset \Ocal^{\DL}_{\cons}$ can be strict. This already happens in the case when $G$ is a torus as the horocycle stack is then $\pt/{\Tbf}^{\Frob}$ and the category of sheaves on it is simply the category of representations of ${\Tbf}^{\Frob}$. The constructible sheaves on ${\Tbf}^{\Frob}$ are exactly the complexes of representations whose underlying complex of $\Lambda$-modules is perfect but if $\ell | |{\Tbf}^{\Frob}|$ then the trivial representation has infinite cohomological dimension and is therefore constructible but not compact. 
\end{rque}

\begin{rque}
It follows from Proposition \ref{prop:compactGenCatO} that if $\Lambda = \Qlb$ then the inclusion $\Ocal^{\DL, \omega} \subset \Ocal_{\cons}^{\DL}$ is an equivalence. 
\end{rque}

\begin{proof}[Proof of proposition \ref{prop:compactGenCatO}]
The category $\Ocal^{\DL}$ is generated by the objects $\Delta_{w, \chi}$. Since the immersions $j_w$ are of finite presentation both functors $j_w^!$ and $j_{w,*}$ are continuous hence their left adjoints preserve compact objects. If follows that the standard sheaves form a set of compact generators of the category. 

Let $A$ be a compact object. By the previous point $j_w^*A$ is compact and since in $\DD(\Rep_{\Lambda}{\Tbf}^{w\Frob})$ the compact objects are stable under Verdier duality, we deduce that $j_w^!A$ is compact. Conversely let $A$ be such that for all $w$, the objects $j_w^!A$ and $j_w^*A$ are compact. Let $w$ be such that $A$ is supported on $\BwB$ and this stratum is maximal for the inclusion and consider $C = \mathrm{cone}(j_{w,!}j_w^!A \to A)$ the cone of the adjunction map. Let $i_w$ be the inclusion of the union of the strata in the closure of $\BwB$, then $i_w$ is a closed immersion of finite presentation and therefore $i_w^*$ and $i_{w,*}$ both preserve compact objects, hence $C = i_{w,*}i_w^*A$ is compact. 

It remains to show that the costandard objects are compact. It is enough to show that $j_v^*\nabla_{w,\chi}$ is compact which follows from applying Verdier duality and the case of the standard sheaves. 
\end{proof}

\subsection{Link with Hecke categories}\label{sec:ComparisonHeckeCat}

Hecke categories attached to $\Gbf$ have two realizations : the equivariant Hecke categories and the free monodromic Hecke categories. These categories were studied, in particular, in \cite{BGS}, \cite{LusztigYun}, \cite{BezrukavnikovYun}, \cite{BezrukavnikovRicheSoergelTheory},  \cite{Gouttard}, \cite{EteveThesis}. Let us give a brief account of these categories. We denote by $\pi_1^t(\Tbf)$ the tame quotient of the étale fundamental group of the torus $\Tbf$ at the geometric point $1 \in \Tbf$. It is known that there is an isomorphism 
$$\pi_1^t(\Tbf) = X_*(\Tbf) \otimes \pi_1^t(\Gm) = X_*(\Tbf) \otimes \hat{\Z}^{(p)}(1).$$
Let $\chi, \chi' : \pi_1^t(\Tbf) \to \Lambda^{\times}$ be two characters of finite order and let us denote by $\mathcal{L}_{\chi}$ (reps. $\mathcal{L}_{\chi'}$) the corresponding Kummer sheaves on $\Tbf$. We can also pull them back to $\Bbf$ and understand them as sheaves on $\Bbf$. With this data, one can construct two categories : 
\begin{enumerate}
\item the category $\DD^b_c((\Bbf, \mathcal{L}_{\chi})\backslash \Gbf/(\Bbf, \mathcal{L}_{\chi'}))$ of $(\Bbf \times \Bbf, \mathcal{L}_{\chi} \otimes \mathcal{L}_{\chi'})$-equivariant sheaves on $\Gbf$, we refer to \cite{LusztigYun} for their construction and properties, 
\item the category $\DD^b_c((\Ubf, \mathcal{L}_{\chi}) \fatbslash \Gbf \fatslash (\Ubf, \mathcal{L}_{\chi'}))$ of free monodromic sheaves with generalized monodromy given by $(\chi, \chi')$, we refer to \cite{BezrukavnikovYun}, \cite{Gouttard}, \cite{EteveThesis} for a construction and their properties.
\end{enumerate}
In all the previous papers, the equivariant (resp. monodromic) Hecke category is the category 
$$\Hh^{\equi} = \oplus_{\chi,\chi'} \DD^b_c((\Bbf, \mathcal{L}_{\chi})\backslash \Gbf/(\Bbf, \mathcal{L}_{\chi'})), (\text{resp.} \Hh^{\mon} =  \oplus_{\chi,\chi'} \DD^b_c((\Ubf, \mathcal{L}_{\chi}) \fatbslash \Gbf \fatslash (\Ubf, \mathcal{L}_{\chi'}))).$$ 

In both settings, the category $\DD^b_c((\Bbf, \mathcal{L}_{\chi})\backslash \Gbf/(\Bbf, \mathcal{L}_{\chi'}))$ (reps. $\DD^b_c((\Ubf, \mathcal{L}_{\chi}) \fatbslash \Gbf \fatslash (\Ubf, \mathcal{L}_{\chi'})$) is nonzero only if $\chi$ and $\chi'$ are in the same $\Wbf$-orbit. Finally in both settings, there are standard and costandard objects $\Delta_{w, \chi}^{\equi}, \nabla_{w, \chi}^{\equi} \in \DD^b_c((\Bbf, \mathcal{L}_{\chi})\backslash \Gbf/(\Bbf, \mathcal{L}_{\chi'})$ (resp. $\Delta_{w, \chi}^{\mon}, \nabla_{w, \chi}^{\mon} \in \DD^b_c((\Ubf, \mathcal{L}_{\chi}) \fatbslash \Gbf \fatslash (\Ubf, \mathcal{L}_{\chi'})$). Note that we are not indicating $\chi'$ in the notation for these standard objects as we must have $\chi' = w\chi$. 

We now introduce functors between these Hecke categories and $\Ocal^{\DL}$. 
\begin{enumerate}
\item The free monodromic Hecke category is realized as a category of sheaves on $\Ubf \backslash \Gbf/\Ubf$, let $\mathfrak{p} :  \Ubf \backslash \Gbf/\Ubf \to \frac{\Ubf \backslash \Gbf/\Ubf}{\Ad_{\Frob}(\Tbf)}$ be the natural quotient map. We will consider $\mathfrak{p}_! : \Hh^{\mon} \to \Ocal^{\DL}$, 
\item Let $(\chi, \chi')$ be such that there exists an element $w \in \Wbf$ such that $\chi' = w\Frob(\chi)$, then there is a natural forgetful functor $\mathfrak{q}^* : \DD^b_c((\Bbf, \mathcal{L}_{\chi})\backslash \Gbf/(\Bbf, \mathcal{L}_{\chi'})) \to \Ocal^{\DL}$ which is obtained by forgetting the $(\Tbf \times \Tbf, \Lcal_{\chi} \otimes \Lcal_{\chi'})$-equivariance down to $\Tbf$-equivariance along the inclusion $\Tbf \to \Tbf \times \Tbf$ given by $t \mapsto (t, \Frob(t^{-1}))$. We denote it by $\mathfrak{q}^*$ as we think of this functor as a pullback along some map $\frac{\Ubf \backslash \Gbf/\Ubf}{\Ad_{\Frob}\Tbf} \to (\Bbf, \mathcal{L}_{\chi})\backslash \Gbf/(\Bbf, \mathcal{L}_{\chi'})$. 
\end{enumerate}

\begin{lem}\label{lem:LinkWithHeckeCat}
Let $w \in \Wbf$ and $\chi$ be a character of $\Tbf^{w\Frob}$. There are natural isomorphisms
\begin{enumerate}
\item $\mathfrak{p}_!\Delta_{w,\chi}^{\mon}[\dim \Tbf](\frac{\dim \Tbf}{2}) = \Delta_w(E_{\chi}), \mathfrak{p}_!\nabla_{w, \chi}^{\mon}[\dim \Tbf](\frac{\dim \Tbf}{2}) = \nabla_w(E_{\chi})$
\item $\mathfrak{q}^*\Delta_{w,\chi}^{\equi} = \Delta_w(\chi), \mathfrak{q}^*\nabla_{w,\chi}^{\equi} = \nabla_w(\chi)$.
\end{enumerate}
\end{lem}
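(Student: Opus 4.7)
The plan is to prove both statements by reducing to a single Bruhat stratum, and then carrying out the computation on that stratum.

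Both $\mathfrak{p}$ and $\mathfrak{q}$ respect the Bruhat stratifications: the preimage of the $w$-stratum in $\Ocal^{\DL}$ is the $w$-stratum in $\Hh^{\mon}$ (resp.\ $\Hh^{\equi}$), up to a fibre that is either unipotent or contributes trivially under the chosen normalization. Base change at the affine immersions $j_w$ then gives $\mathfrak{p}_! \, j_{w,!}^{\mon} = j_{w,!} \, \mathfrak{p}_!|_w$ and $\mathfrak{q}^* j_{w,!}^{\equi} = j_{w,!} \, \mathfrak{q}^*|_w$; the analogous statement for the $*$-extensions follows either from base change for the closed complements or, over $\Qlb$, from Verdier duality on the stratum. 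This reduces the lemma to computing $\mathfrak{p}_!$ and $\mathfrak{q}^*$ applied to the coefficient sheaf living on a single stratum.

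For part (ii), on the $w$-stratum the equivariant stack is $\pt/(\Tbf \times \Tbf \ltimes \Ubf_w)$ carrying the twist by $\Lcal_{\chi} \boxtimes \Lcal_{w\Frob(\chi)}$, while the $\Ocal^{\DL}$-stratum is $\pt/(\Tbf^{w\Frob} \ltimes \Ubf_w)$. The functor $\mathfrak{q}^*$ restricts equivariance along $\Tbf \hookrightarrow \Tbf \times \Tbf$, $t \mapsto (t, \Frob(t^{-1}))$, and further to $\Tbf^{w\Frob}$. The compatibility condition $\chi' = w\Frob(\chi)$ is exactly what makes the pulled-back Kummer local system trivialize to a genuine representation, and a direct check shows that on $\Tbf^{w\Frob}$ this representation is the character $\chi|_{\Tbf^{w\Frob}}$. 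Thus $\mathfrak{q}^*$ of the rank-one Kummer coefficient is the character $\chi$ of $\Tbf^{w\Frob}$, and applying $j_{w,!}$ (resp.\ $j_{w,*}$) yields $\Delta_w(\chi)$ (resp.\ $\nabla_w(\chi)$).

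For part (i), on the stratum $\Delta^{\mon}_{w,\chi}$ is, up to its global shift, the free monodromic Kummer pro-sheaf $\widetilde{\Lcal}_{\chi}$ on $\Ubf \backslash \Bbf w \Bbf / \Ubf$, in the sense of \cite{BezrukavnikovYun, Gouttard, EteveThesis}. Under the normalization fixed in \cite[Section 3.2]{EtevePaper1}, the restriction of $\mathfrak{p}$ to the stratum factors through the Lang quotient $\Tbf \to \pt/\Tbf^{w\Frob}$, with the unipotent fibres contributing trivially. The defining property of $\widetilde{\Lcal}_{\chi}$ is that pushforward along the Lang covering realizes the pro-$\ell$-tame Kummer monodromy as the block of $\chi$ in $\Rep_\Lambda \Tbf^{w\Frob}$, and concretely produces the projective cover $E_{\chi}$, shifted by the relative dimension $\dim \Tbf$ and twisted accordingly. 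This matches the shift $[\dim\Tbf](\tfrac{\dim\Tbf}{2})$ in the statement, and the costandard case follows by the same argument with $\mathfrak{p}_*$ in place of $\mathfrak{p}_!$.

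The main obstacle is the last identification in part (i): matching $\mathfrak{p}_! \widetilde{\Lcal}_{\chi}$ with the projective cover $E_{\chi}$. It requires comparing the pro-completed monodromic structure of the free monodromic Hecke category with the block decomposition of $\Rep_\Lambda \Tbf^{w\Frob}$, in particular reconciling the pro-$\ell$-tame Kummer monodromy with the finite quotients produced by the Lang covering. This identification is essentially the content of \cite[Section 3.2]{EtevePaper1}, and will be invoked as a black box in the argument.
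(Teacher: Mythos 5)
Your proof is correct and matches the paper's in substance: the paper dismisses (ii) as $\mathfrak{q}^*$ being simply the forgetful functor and cites \cite[Lemma 2.8.3]{EtevePaper1} for (i), which is precisely the identification of $\mathfrak{p}_!$ of the free monodromic Kummer pro-sheaf with the projective cover $E_{\chi}$ that you isolate and invoke as a black box. The one small imprecision is the remark that the costandard case follows ``with $\mathfrak{p}_*$ in place of $\mathfrak{p}_!$''---the statement still concerns $\mathfrak{p}_!\nabla^{\mon}_{w,\chi}$; what you actually need is that $\mathfrak{p}_!$ commutes with $j_{w,*}$, which holds because $\mathfrak{p}$ is a smooth $\Tbf$-torsor, so $\mathfrak{p}_! \simeq \mathfrak{p}_*[2\dim\Tbf](\dim\Tbf)$ and smooth base change applies.
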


\begin{proof}
The second point is clear since $\mathfrak{q}^*$ is simply the forgetful functor. The first point follows from \cite[Lemma 2.8.3]{EtevePaper1}. 
\end{proof}

\subsection{Tilting objects}

\begin{defi}
Let $A \in \Ocal^{\DL, \heartsuit}$, 
\begin{enumerate}
\item a $\Delta$-filtration on $A$ is a filtration whose graded pieces are standard sheaves, 
\item a $\nabla$-filtration on $A$ is a filtration whose graded pieces are costandard sheaves,
\item the sheaf $A$ is tilting if $A$ has both a $\Delta$ and a $\nabla$-filtration. 
\end{enumerate}
We denote by $\Ocal^{\DL}_{\tilt}$ the full subcategory of $\Ocal^{\DL, \heartsuit}$ of tilting objects. 
\end{defi}

\begin{thm}\label{thm:StructureCatO}
\begin{enumerate}
\item The category $\Ocal^{\DL, \heartsuit}$ has enough projective objects and all projectives have a $\Delta$-filtration. If $\Lambda$ is a field it also has enough injective objects and the injective objects have a $\nabla$-filtration.
\item The category $\Ocal^{\DL}_{\tilt}$ is an additive, idempotent complete and Krull-Schmidt category.
\item There is a unique bijection between indecomposable tilting objects and pairs $(w, \chi)$ where $w \in \Wbf$ and $\chi$ is a irreducible character (over the residue field of $\Lambda$) of $\Tbf^{w\Frob}$ characterized by the fact that the indecomposable tilting corresponding to $(w,\chi)$ is supported on the closure of $\BwB$ and has $\Delta_{w,\chi}$ with multiplicity one in a $\Delta$-filtration. We denote this tilting object by $T_{w, \chi}$. 
\end{enumerate}
\end{thm}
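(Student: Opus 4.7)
The plan is to run the classical highest-weight machinery (following \cite{BGS}) on the stratified stack $\frac{\Ubf \backslash \Gbf/\Ubf}{\Ad_{\Frob}\Tbf}$, built on three ingredients: $\Ext$-orthogonality between standards and costandards, a stalk/costalk criterion for $\Delta$- and $\nabla$-filtrations, and an inductive extension along the Bruhat order. The cornerstone is the computation, for $v, w \in \Wbf$ and representations $\rho$ of $\Tbf^{v\Frob}$ and $\rho'$ of $\Tbf^{w\Frob}$, of
\[
\RHom_{\Ocal^{\DL}}(\Delta_v(\rho), \nabla_w(\rho')) \cong \RHom(j_w^* j_{v,!}\rho, \rho')[\ell(w)-\ell(v)],
\]
which vanishes when $v \neq w$ by disjointness of the strata, and for $v = w$ collapses (via $j_w^* j_{w,!} = \id$) to $\Hom_{\Tbf^{w\Frob}}(\rho, \rho')$ concentrated in degree zero.

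From this I derive the criterion: an object $A \in \Ocal^{\DL, \heartsuit}$ admits a $\Delta$-filtration iff for every $w$ the costalk $j_w^! A$ is concentrated in cohomological degree $-\ell(w)$ and is projective as a $\Tbf^{w\Frob}$-representation, and dually for $\nabla$-filtrations in terms of $j_w^* A$. To prove part $(i)$, I build projective covers by induction on the number of strata in the support: I start from the open stratum, where $\Rep_{\Lambda} \Tbf^{w_0 \Frob}$ trivially has enough projectives, and at each step extend across a new boundary stratum $v$ by a direct sum of copies of $\Delta_v(E_\mu)$'s chosen to kill the $\Ext^1$-obstructions between the already-built partial projective and the standards on the new stratum; the orthogonality of the previous paragraph guarantees both that the resulting object remains projective in $\Ocal^{\DL, \heartsuit}$ and that it still carries a $\Delta$-filtration. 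When $\Lambda$ is a field, Verdier duality on compact objects swaps $\Delta$'s and $\nabla$'s (up to $\chi \mapsto \chi^\vee$), and since $\Rep_{\Lambda}\Tbf^{w\Frob}$ is self-injective, it dualizes the whole argument, providing enough injectives with $\nabla$-filtrations.

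For $(ii)$ and $(iii)$, I construct $T_{w,\chi}$ by the standard iterative procedure: set $T^{(0)} = \Delta_{w,\chi}$ and recursively enlarge by extensions with $\Delta_v(E_\mu)$'s ($v < w$) on strata in the boundary, chosen to kill all classes in $\Ext^1(T^{(k)}, \nabla_v(E_\mu))$; since $\Wbf$ is finite the process terminates, and the costalk criterion certifies that the output is perverse tilting. For indecomposability and Krull--Schmidt, the standard argument yields a surjection $\End(T_{w,\chi}) \twoheadrightarrow \End_{\Tbf^{w\Frob}}(E_\chi)$ whose kernel consists of endomorphisms factoring through strata strictly below $w$, hence is nilpotent; since the target is local (the projective cover of an absolutely irreducible character), $\End(T_{w,\chi})$ is local by idempotent lifting, forcing indecomposability and uniqueness up to isomorphism. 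That every indecomposable tilting arises in this way follows by considering the highest stratum in its support and invoking uniqueness. Additivity and idempotent completeness of $\Ocal^{\DL}_{\tilt}$ are then immediate from the costalk criterion, which is stable under finite direct sums and summands. The main technical obstacle is running the extension arguments cleanly over $\Lambda = \Zlb$, where $\End_{\Tbf^{w\Frob}}(E_\chi)$ is a complete local ring rather than a field and the relevant $\Ext^1$'s are modules over such rings: one must verify that obstructions can be killed in a controlled finite number of steps compatibly with compactness (Proposition \ref{prop:compactGenCatO}), and that the multiplicity $[T : \Delta_{w,\chi}]$ appearing in the characterization of $(iii)$ is genuinely filtration-independent --- a statement that again reduces to the orthogonality of the first paragraph.
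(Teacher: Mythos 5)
Your approach is genuinely different from the paper's, and it contains at least one concrete error that needs to be fixed. The paper does not build projective and tilting objects from scratch inside $\Ocal^{\DL}$: it imports them from the free monodromic Hecke category $\Hh^{\mon}$ (where existence, $\Delta^{\mon}$-filtrations of projectives, and indecomposable tilting objects are already established by Bezrukavnikov--Yun and Gouttard), via the pushforward $\mathfrak{p}_!$ along $\Ubf\backslash\Gbf/\Ubf \to \frac{\Ubf\backslash\Gbf/\Ubf}{\Ad_{\Frob}\Tbf}$. The two observations that make this work are that $\mathfrak{p}_![\dim\Tbf]$ carries $\Delta^{\mon}$-filtrations to $\Delta$-filtrations (Lemma \ref{lem:LinkWithHeckeCat}) and that $\mathfrak{p}^![-\dim\Tbf]$ is $t$-exact, so $\mathfrak{p}_!$ preserves projectivity. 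Indecomposability of the image is deduced from the fact that $\End(\mathfrak{p}_!T)$ is a quotient of the local ring $\End(T)$ (Remark \ref{rq:IndecomposibilityOfTilting}), and uniqueness is cited from \cite{RicheHDR}. This buys a large economy: all the $\Zlb$-technicalities you flag at the end of your argument --- finiteness of the $\Ext^1$-obstruction modules, termination of the extension process, filtration-independence of multiplicities over a DVR --- are absorbed by the corresponding results in the Hecke category and never have to be re-run. Your Ringel-style direct construction is in principle viable (and is closer to how \cite{AcharRicheKoszulDuality1} and \cite{RicheSoergelWilliamson} proceed for ordinary category $\Ocal$ over $\Zl$), but as written it is a sketch of that much longer argument rather than a proof.

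Beyond the difference in strategy, there is a definite error in your stalk/costalk criterion: you assert that $A$ admits a $\Delta$-filtration iff $j_w^! A$ is concentrated in degree $-\ell(w)$ and projective, with the dual statement in terms of $j_w^* A$ for $\nabla$-filtrations. This is backwards. Since $\Delta_{v,\chi} = j_{v,!}E_\chi[\ell(v)]$ and the $*$-restriction kills extensions by zero from other strata ($j_w^* j_{v,!} = 0$ for $w\neq v$, $=\id$ for $w=v$), it is the \emph{stalks} $j_w^* A$ that are controlled by a $\Delta$-filtration; the costalks $j_w^!\Delta_{v,\chi}$ for $w$ in the boundary of $\BwB$ are in general complicated. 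The paper's Lemma \ref{lem:CharDeltaFilt} gets this right: $\Delta$-filtration $\Leftrightarrow$ $j_w^*A$ perverse and projective, $\nabla$-filtration $\Leftrightarrow$ $j_w^!A$ perverse and projective. Since your entire inductive construction of projectives and tiltings is certified against this criterion, the swap propagates; you would need to correct it before the argument can be believed. A smaller inaccuracy: you say $\RHom(\Delta_v(\rho),\nabla_w(\rho'))$ ``collapses to $\Hom_{\Tbf^{w\Frob}}(\rho,\rho')$ concentrated in degree zero'' for $v=w$ --- this is only true when $\rho$ is projective (which holds for the actual standards $\Delta_{w,\chi}$ with $\rho = E_\chi$, but not for arbitrary $\rho$ as your phrasing suggests).
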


\begin{rque}
Theorem \ref{thm:StructureCatO} is very analogous to the structure theorem for category $\Ocal$, see \cite[Section 3]{BGS} and the proof uses the yoga of highest weight categories, see \cite[Section 7]{RicheHDR} and \cite{AcharRicheKoszulDuality1} for the $\Zl$-case. 
\end{rque}

\begin{proof}[Proof of Theorem \ref{thm:StructureCatO}]
The existence of enough projective for $\Ocal^{\DL, \heartsuit}$ follows from the same statement established for free monodromic Hecke categories and the next two observations.
\begin{enumerate}
\item If $P$ is a free monodromic projective object then $\mathfrak{p}_!P[\dim \Tbf]$ is perverse. Indeed free monodromic projective objects have a $\Delta^{\mon}$-filtration, see \cite{BezrukavnikovYun} and \cite{Gouttard}, hence $\mathfrak{p}_!P[\dim \Tbf]$ has a $\Delta$-filtration by lemma \ref{lem:LinkWithHeckeCat}.
\item The right adjoint functor $\mathfrak{p}^![-\dim \Tbf]$ is $t$-exact, as $\mathfrak{p}$ is smooth, hence $\Hom(\mathfrak{p}_!P, -)[-\dim \Tbf] = \Hom(P, \mathfrak{p}^!-)[-\dim \Tbf]$ is an exact functor.  
\end{enumerate}
The existence of a $\Delta$-filtration is therefore established for a generating family of projective objects. The existence for all projective objects follows from the characterization of objects with a $\Delta$-filtration of Lemma \ref{lem:CharDeltaFilt} and the fact that this characterization is stable under taking direct summands. The statement about injective objects follows by Verdier duality. This establishes $(i)$. 

For $(ii)$, Lemma \ref{lem:CharDeltaFilt} guarantees that the category $\Ocal^{\DL}_{\tilt}$ is idempotent complete and additive. The fact that it is Krull-Schmidt follows from the constructibility of tilting objects and the fact that $\Lambda$ is a local ring. 

For the characterization of indecomposable tilting objects, we first note that the functor $\mathfrak{p}_![\dim \Tbf]$ sends tilting objects to tilting objects by Lemma \ref{lem:LinkWithHeckeCat}. For free monodromic Hecke categories, the corresponding statement is established in \cite{BezrukavnikovYun} in the unipotent case and \cite{Gouttard} in the non-unipotent case. This guarantees the existence of a tilting sheaf supported on the closure of $\BwB$ and such that the multiplicity of $\Delta_{w, \chi}$ in one of its $\Delta$-flags is $1$. Since the category of tilting sheaves is Krull-Schmidt there exists a direct summand that is indecomposable and satisfies both of these properties. The uniqueness can be proved as in \cite[Theorem 7.14]{RicheHDR}.  
\end{proof}

\begin{rque}\label{rq:IndecomposibilityOfTilting}
Note that we prove the existence of a generating collection of tilting sheaves from the existence of tilting sheaves in the Hecke category. Also note that the pushforward of an indecomposable tilting sheaf is indecomposable. Indeed if $T \in \Hh^{\mon}$ is indecomposable then $\End(T)$ is a local algebra and $\End(\mathfrak{p}_!T)$ is a quotient of it hence it is also local.
\end{rque}

\begin{lem}\label{lem:CharDeltaFilt}
A sheaf $A \in \Ocal^{\DL}$ is tilting if and only if for all $w \in \Wbf$, the objects $j_w^!A$ and $j_w^*A$ are perverse and projective representations of $\Tbf^{w\Frob}$. More generally $A$ has a $\Delta$ (reps. $\nabla$)-filtration if and only if $j_w^*A$ is perverse and projective (resp. $j_w^!A$ is perverse and projective). 
\end{lem}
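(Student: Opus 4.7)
The plan is to prove the refined statement about $\Delta$-filtrations; the $\nabla$-case follows by Verdier duality (using that $\Lambda[\Tbf^{w\Frob}]$ is a Frobenius algebra, so projective and injective representations coincide) and the tilting case is the conjunction of the two.

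For the forward direction, assume $A \in \Ocal^{\DL,\heartsuit}$ admits a $\Delta$-filtration $0 = A_0 \subset \cdots \subset A_n = A$ with $A_i/A_{i-1} \simeq \Delta_{v_i,\chi_i}$. Since $j_v^* j_{w,!} = 0$ for $v \neq w$ (the $!$-extension has vanishing stalks outside its stratum), applying $j_w^*$ kills all steps except those with $v_i = w$; on each such step one obtains a triangle $j_w^*A_{i-1} \to j_w^*A_i \to E_{\chi_i}[\ell(w)]$ in $\DD(\Rep_\Lambda \Tbf^{w\Frob})$. Inducting on $i$, one may assume $j_w^*A_{i-1} \simeq U[\ell(w)]$ with $U$ projective; the connecting class lives in $\Ext^1_{\Tbf^{w\Frob}}(E_{\chi_i}, U) = 0$, so the triangle splits and $j_w^*A_i$ is again perverse and projective.

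For the backward direction, I induct on the number of strata in $\mathrm{supp}(A)$ (a finite set, since $\Wbf$ is finite). Pick $w_0$ maximal in $\mathrm{supp}(A)$ and set $V := j_{w_0}^*A[-\ell(w_0)]$, a projective representation of $\Tbf^{w_0\Frob}$. By Krull--Schmidt over the local ring $\Lambda$, $V \cong \bigoplus_i E_{\chi_i}^{n_i}$, whence $\Delta_{w_0}(V) \cong \bigoplus_i \Delta_{w_0,\chi_i}^{n_i}$ is a direct sum of standards. The counit $j_{w_0,!}j_{w_0}^*A \to A$ fits into a triangle
$$\Delta_{w_0}(V) \to A \to B \to \Delta_{w_0}(V)[1].$$
Applying $j_v^*$: we get $j_{w_0}^*B = 0$ (cone of an identity) and, for $v \neq w_0$, the vanishing $j_v^*\Delta_{w_0}(V) = 0$ yields $j_v^*B \simeq j_v^*A$, still perverse and projective. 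Thus $B$ inherits the hypothesis and has strictly smaller support, so by induction it admits a $\Delta$-filtration and is in particular perverse.

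The long exact sequence of perverse cohomology attached to the triangle, together with perversity of $\Delta_{w_0}(V)$ and $B$, forces ${}^pH^i(A) = 0$ for $i \neq 0$ and yields a short exact sequence $0 \to \Delta_{w_0}(V) \to A \to B \to 0$ in $\Ocal^{\DL,\heartsuit}$. Splicing the inductively constructed $\Delta$-filtration of $B$ onto $\Delta_{w_0}(V) \simeq \bigoplus_i \Delta_{w_0,\chi_i}^{n_i}$ produces the desired $\Delta$-filtration of $A$. The main obstacle is the inductive step: verifying that $B$ again satisfies the hypothesis rests crucially on the vanishing $j_v^*\Delta_{w_0}(V) = 0$ for $v \neq w_0$ (i.e.\ that standards are $!$-extensions from a single stratum), after which perversity of $A$ and the short exact sequence fall out of the long exact sequence mechanically.
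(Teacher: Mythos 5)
Your proof is correct, and the backward direction takes a genuinely different route from the paper's. The paper inducts by peeling off the standard at a \emph{minimal} stratum $w$ of the support: it introduces the closed immersion $i_w$ of $\overline{\BwB}$ and its open complement $k_w$, uses minimality to identify $i_{w,*}i_w^*A$ with $j_{w,!}j_w^*A$, and realizes this direct sum of standards as a \emph{quotient} of $A$ via the excision triangle $k_{w,!}k_w^* \to \id \to i_{w,*}i_w^*$, with $B = k_{w,!}k_w^*A$ handling the inductive step. You instead peel off the standard at a \emph{maximal} stratum $w_0$ as a \emph{subobject}, using the adjunction counit $j_{w_0,!}j_{w_0}^*A \to A$ directly and taking its cone $B$; the key vanishing $j_v^*j_{w_0,!}=0$ for $v\neq w_0$ plays the same role that minimality plays in the paper. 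Both are inductions on the number of strata in the $*$-support and both are valid. Your version has two small advantages: it avoids the auxiliary closed subvariety $\overline{\BwB}$, and it explicitly derives perversity of $A$ from the long exact sequence of perverse cohomology, whereas the paper's phrase ``the map $A \to i_{w,!}i_w^*A$ is a map of perverse sheaves'' tacitly presupposes $A$ perverse, which is part of what is to be shown. The forward direction is essentially the same in both arguments; you make explicit the $\Ext^1$-vanishing/splitting step that the paper elides when concluding $j_w^*A$ is perverse and projective.
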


\begin{proof}
If $A$ has a $\Delta$-filtration, then $j_w^*A$ is filtered with graded pieces of the form $j_w^*\Delta_{v, \chi}$, where $v \in \Wbf$ and $\chi$ is a character of $\Tbf^{v\Frob}$. Since we have 
\begin{enumerate}
\item $j_w^*\Delta_{v, \chi} = 0$ if $v \neq w$, 
\item $j_w^*\Delta_{v, \chi} = E_{\chi}$ if $v = w$,
\end{enumerate}
it follows that $j_w^*$ is perverse and projective. 
Conversely if for all $w$, the sheaf $j_w^*A$ is perverse and projective, we now show that it is equipped with a $\Delta$-filtration. We equip $\Wbf$ with the Bruhat order, i.e. the order induced by the closure relations of strata and we choose an extension of this order to a total order, we denote this extension $\leq_{\tot}$. Let $w$ be the minimal element of $\Wbf$ such that $j_w^*A \neq 0$. We denote by $i_w : \frac{\Ubf \backslash \overline{\BwB}/\Ubf }{\Ad_{\Frob}\Tbf} \subset \frac{\Ubf \backslash \Gbf/\Ubf }{\Ad_{\Frob}\Tbf}$ the inclusion of the closure of the stratum corresponding to $w$ and by $k_w : V_w \subset  \frac{\Ubf \backslash \Gbf/\Ubf }{\Ad_{\Frob}\Tbf}$ the open complement. Since $w$ is minimal among the strata supporting $A$, we have 
$$j_{w,!}j_w^*A = i_{w,!}i_w^*A,$$
which is a direct sum of standard sheaves. 
We now have an excision triangle
$$k_{w,!}k_w^*A \to A \to i_{w,!}i_w^*A.$$
The map $A \to i_{w,!}i_w^*A$ is a map of perverse sheaves that is surjective in the category of perverse sheaves. Indeed the cokernel of this map is supported on $\frac{\Ubf \backslash {\BwB}/\Ubf }{\Ad_{\Frob}\Tbf}$ but the map $A \to i_{w,!}i_w^*A$ is an isomorphism when restricted to $\frac{\Ubf \backslash {\BwB}/\Ubf }{\Ad_{\Frob}\Tbf}$. It follows that this triangle is a short exact sequence of perverse sheaves. Finally, the sheaf $B = k_{w,!}k_w^*A$ satisfies
\begin{enumerate}
\item $j_v^*B = j_v^*A$ if $v >_{\tot} w$ and is perverse and projective by induction, 
\item $j_v^*B = 0$ if $v \leq_{\tot} w$. 
\end{enumerate}
Since $A$ is supported on finitely many strata (as $\Wbf$ is finite), by induction we can reduce to the case where $A$ is supported on a single stratum but then the claim is trivial. 
\end{proof}

\begin{lem}\label{lem:formalityHomTilting}
Let $T,T'$ be two tilting sheaves then $\Hom(T,T')$ is concentrated in degree $0$.
\end{lem}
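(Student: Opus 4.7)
The plan is to follow the standard tilting formalism: reduce to the computation of $\Hom$ between a standard and a costandard object, then devissage along the $\Delta$-filtration of $T$ and the $\nabla$-filtration of $T'$.

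The key calculation is that for any $w, v \in \Wbf$ and any characters $\chi$ of $\Tbf^{w\Frob}$, $\chi'$ of $\Tbf^{v\Frob}$, the complex $\Hom(\Delta_{w,\chi}, \nabla_{v,\chi'})$ is concentrated in degree $0$. By the adjunction between $j_{w,!}$ and $j_w^*$, and ignoring Tate twists which are irrelevant here, this computes (up to a shift) as
\[
\Hom(j_{w,!}E_{\chi}[\ell(w)], j_{v,*}E_{\chi'}[\ell(v)]) = \Hom(j_v^* j_{w,!}E_{\chi}, E_{\chi'})[\ell(v) - \ell(w)].
\]
Since the Bruhat strata are pairwise disjoint and $j_{w,!}$ is extension by zero, $j_v^* j_{w,!} = 0$ whenever $v \neq w$, so the $\Hom$ vanishes. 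When $v = w$, the shifts cancel and we are reduced to computing $\Hom(E_{\chi}, E_{\chi'})$ in $\DD(\Rep_{\Lambda} \Tbf^{w\Frob})$. Because $E_{\chi}$ is by definition a projective object in the abelian heart and $E_{\chi'}$ lies in the heart, this $\Hom$ is concentrated in degree $0$.

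From here, the general statement follows by an inductive long exact sequence argument. Fix a $\nabla$-filtration $0 = T'_0 \subset T'_1 \subset \cdots \subset T'_m = T'$ with graded pieces $\nabla_{v_j,\chi'_j}$; by induction on $m$ and the long exact sequence coming from each short exact sequence $T'_{j-1} \to T'_j \to \nabla_{v_j,\chi'_j}$ applied to $\Hom(T,-)$, it suffices to show $\Hom(T, \nabla_{v,\chi'})$ is concentrated in degree $0$ for each costandard. Then fix a $\Delta$-filtration of $T$ and run the same induction on the other side using the sequences $\Delta_{w_i,\chi_i} \to T/T_{i-1} \to T/T_i$, reducing to the standard-vs-costandard case already handled.

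The only subtlety I expect is making sure the vanishing $\Hom^{<0} = 0$ really holds, but this is automatic: both $\Delta_{w,\chi}$ and $\nabla_{v,\chi'}$ are perverse, so $\Hom$ between them has no negative cohomology, while the above adjunction argument handles the positive degrees. Noetherianity of the heart ensures the filtrations are finite, so the induction terminates.
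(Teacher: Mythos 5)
Your argument is correct and is essentially the paper's own argument spelled out in detail: the paper's proof consists of the single sentence ``By the five lemma, this reduces to the fact that $\Hom(\Delta,\nabla)$ is concentrated in degree~$0$,'' which is exactly the two-sided dévissage along the $\Delta$- and $\nabla$-filtrations that you carry out, followed by the adjunction computation on strata. Your explicit check of the base case (using $j_v^*j_{w,!}=0$ for $v\neq w$, and projectivity of $E_{\chi}$ when $v=w$) and of the absence of negative $\Ext$'s is exactly what the paper leaves implicit.
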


\begin{proof}
By the five lemma, this reduces to the fact that $\Hom(\Delta, \nabla)$ is concentrated in degree $0$. 
\end{proof}

\begin{prop}\label{prop:generationTilting}
The category $\Ocal^{\DL, \omega}$ is generated by $\Ocal^{\DL}_{\tilt}$ as a triangulated category.
\end{prop}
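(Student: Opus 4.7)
The plan is to show by induction on the Bruhat length $\ell(w)$ that each standard sheaf $\Delta_{w,\chi}$ lies in the thick triangulated subcategory of $\Ocal^{\DL}$ generated by $\Ocal^{\DL}_{\tilt}$. By Proposition \ref{prop:compactGenCatO} the standard sheaves form a family of compact generators of $\Ocal^{\DL}$, so $\Ocal^{\DL,\omega}$ is the thick triangulated subcategory they generate, and this reduction will therefore suffice.

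For the base case $\ell(w)=0$ we have $w=e$, the corresponding stratum is closed in the horocycle stack, so $j_{e}$ is a closed immersion and $\Delta_{e,\chi} = T_{e,\chi}$ is already tilting. For the inductive step, let $T_{w,\chi}$ be the indecomposable tilting provided by Theorem \ref{thm:StructureCatO}. By Lemma \ref{lem:CharDeltaFilt}, $j_w^*T_{w,\chi}$ is perverse and projective; since $T_{w,\chi}$ admits a $\Delta$-filtration in which $\Delta_{w,\chi}$ appears with multiplicity one and all other subquotients are of the form $\Delta_{v,\chi'}$ with $v<w$ in the Bruhat order, and since $j_w^*\Delta_{v,\chi'}=0$ for $v\neq w$ by support considerations, one computes $j_w^*T_{w,\chi} = E_{\chi}[\ell(w)](\tfrac{\ell(w)}{2})$. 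The adjunction triangle attached to the open immersion $j_w$ thus reads
$$\Delta_{w,\chi} \to T_{w,\chi} \to i_{w,*}i_w^*T_{w,\chi} \to \Delta_{w,\chi}[1],$$
where $i_w$ denotes the closed inclusion of the complement of $\BwB$ inside $\overline{\BwB}$. The cofiber is compact (the other two terms being so) and supported on strata of length strictly less than $\ell(w)$.

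To conclude, I argue that this cofiber lies in the thick triangulated subcategory generated by the $\Delta_{v,\chi'}$ with $v<w$, by iterating the stratification argument used in the proof of Proposition \ref{prop:compactGenCatO}: peel off a maximal stratum of its support via the analogous adjunction triangle and recurse. Each peeling step extracts a compact sheaf supported on a single stratum $v$, which under the equivalence $\DD(\pt/(\Tbf^{v\Frob}\ltimes\Ubf_v)) = \DD(\Rep_\Lambda \Tbf^{v\Frob})$ corresponds to a perfect complex of representations. Such a complex is thickly generated by the projective indecomposables $E_{\chi'}$, and its $j_{v,!}$-pushforward is therefore in the thick subcategory generated by the $\Delta_{v,\chi'}$. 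By the inductive hypothesis each such $\Delta_{v,\chi'}$ lies in the thick subcategory generated by tilting sheaves, hence so does the cofiber and, by the displayed triangle, so does $\Delta_{w,\chi}$.

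The main obstacle is the bookkeeping involved in rewriting the cofiber $i_{w,*}i_w^*T_{w,\chi}$ -- which is neither standard nor tilting -- as an iterated extension of standard sheaves on smaller strata. This is essentially a relative version of Proposition \ref{prop:compactGenCatO} applied to the closed complement, combined with the observation that perfect complexes of $\Tbf^{v\Frob}$-representations are thickly generated by the projective covers $E_{\chi'}$. No further input beyond Theorem \ref{thm:StructureCatO} and Lemma \ref{lem:CharDeltaFilt} is required.
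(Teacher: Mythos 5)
Your proof is correct and spells out exactly the triangularity induction that the paper's one-line proof (``standard and tilting objects generate the same category'') leaves implicit: tiltings are $\Delta$-filtered, and conversely each $\Delta_{w,\chi}$ is recovered from $T_{w,\chi}$ modulo strictly lower strata. One small simplification: you do not need the stratification/peeling argument to handle the cofiber $i_{w,*}i_w^*T_{w,\chi}$ --- since $\Delta_{w,\chi}=j_{w,!}j_w^*T_{w,\chi}\hookrightarrow T_{w,\chi}$ is a subobject of a $\Delta$-filtered perverse sheaf with $j_w^*$ of the quotient vanishing, the quotient itself carries a $\Delta$-filtration whose graded pieces are $\Delta_{v,\chi'}$ with $v<w$ (e.g.\ by Lemma \ref{lem:CharDeltaFilt}), so the inductive hypothesis applies directly and the reduction to perfect complexes on strata is unnecessary.
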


\begin{proof}
This is clear since the standard and tilting objects generate the same category. 
\end{proof}

\begin{lem}
Let $T \in \Ocal_{\tilt, \Zlb}^{\DL}$ then $T \otimes_{\Zlb} \Qlb$ and $T \otimes_{\Zlb} \Flb$ are tilting. Moreover $T$ is indecomposable if and only if $T \otimes_{\Zlb} \Flb$ is indecomposable. 
\end{lem}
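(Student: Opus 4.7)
The plan is to treat the two claims independently, using Lemma \ref{lem:CharDeltaFilt} for the first and Lemma \ref{lem:formalityHomTilting} together with Krull--Schmidt for the second.

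For preservation of tilting, I would invoke Lemma \ref{lem:CharDeltaFilt}: being tilting is detected stratumwise by the condition that $j_w^*T$ and $j_w^!T$ are perverse and projective as representations of $\Tbf^{w\Frob}$. The change of scalars functor $-\otimes_{\Zlb}^{L} \Lambda$, for $\Lambda \in \{\Qlb, \Flb\}$, commutes with $j_w^*$ (as a pullback) and with $j_w^!$ (which, for a locally closed immersion, factors through open restriction and local cohomology, both commuting with coefficient base change). Under the $t$-exact identification of each stratum with $\DD(\Rep \Tbf^{w\Frob})$, the hypothesis says $j_w^*T$ and $j_w^!T$ are finitely generated projective $\Zlb[\Tbf^{w\Frob}]$-modules in degree zero; such modules are flat over $\Zlb$, so the derived tensor product with $\Lambda$ stays in degree zero, and the result is a projective $\Lambda[\Tbf^{w\Frob}]$-module (since projectivity is preserved by tensor with $\Lambda$ as $\Lambda[G]$-modules). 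Reapplying Lemma \ref{lem:CharDeltaFilt} produces the tilting property for $T \otimes_{\Zlb}\Lambda$.

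For indecomposability, one direction is immediate: a splitting $T = T_1 \oplus T_2$ with both summands nonzero induces a splitting of $T \otimes_{\Zlb}\Flb$ with nonzero pieces, by Nakayama applied to the stalks (finitely generated $\Zlb[\Tbf^{w\Frob}]$-modules). For the converse I would compare endomorphism rings. The key identity is
\[
\End(T \otimes_{\Zlb} \Flb) \;=\; \End(T) \otimes_{\Zlb} \Flb,
\]
which I would derive from the base change formula
\[
\RHom_{\Flb}(T \otimes_{\Zlb}\Flb,\, T \otimes_{\Zlb}\Flb) \;\simeq\; \RHom_{\Zlb}(T,T) \otimes_{\Zlb}^{L} \Flb
\]
combined with Lemma \ref{lem:formalityHomTilting}, which forces both sides to be concentrated in degree zero and thus collapses $\otimes^L$ to $\otimes$.

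To conclude, I need that $\End(T)$ is a finitely generated $\Zlb$-module. This is proved by induction on the number of strata in the support of $T$: the one-stratum case reduces to $\Hom$ between finitely generated projective $\Zlb[\Tbf^{w\Frob}]$-modules, which is finite over $\Zlb$ since $\Tbf^{w\Frob}$ is finite; the inductive step is supplied by the excision triangle used in the proof of Lemma \ref{lem:CharDeltaFilt}, turned into a long exact sequence of Homs via Lemma \ref{lem:formalityHomTilting}. Granted this finiteness, $\End(T)$ is $\ell$-adically complete, and each of its maximal ideals contains $\ell$ (the residue field is a finite extension of $\Flb$), so $\ell$ lies in the Jacobson radical. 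Hensel's lemma lifts idempotents from $\End(T)/\ell\End(T) = \End(T\otimes_{\Zlb}\Flb)$ to $\End(T)$, so $\End(T)$ is local iff $\End(T\otimes_{\Zlb}\Flb)$ is; by the Krull--Schmidt property of $\Ocal^{\DL}_{\tilt}$ established in Theorem \ref{thm:StructureCatO}, this is precisely the claimed equivalence of indecomposability. The main obstacle I foresee is verifying the finiteness of $\End(T)$ over $\Zlb$; everything downstream is formal base-change and Hensel.
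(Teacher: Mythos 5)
Your proof is correct and follows essentially the same route as the paper's: both hinge on the Krull--Schmidt characterization (indecomposable iff $\End(T)$ local), the base-change isomorphism $\End(T)\otimes_{\Zlb}\Flb \simeq \End(T\otimes_{\Zlb}\Flb)$, and the finiteness of $\End(T)$ over $\Zlb$. The paper is terser — it asserts outright that $\End(T)$ is a finite free $\Zlb$-module and treats the local-iff-local-mod-$\ell$ equivalence as standard — whereas you derive the finiteness by induction on strata and spell out the idempotent-lifting step via Hensel, but the substance is identical.
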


\begin{proof}
The object $T$ is indecomposable if and only if $\End(T)$ is a local algebra. As $T$ is tilting $\End(T)$ is a finite free $\Zlb$-module and the natural map $\End(T) \otimes_{\Zlb} \Flb \to \End(T \otimes_{\Zlb} \Flb)$ is an isomorphism. Hence $\End(T)$ is local if and only $\End(T) \otimes_{\Zlb} \Flb$ is local. 
\end{proof}

\subsection{Action of $\Tbf^{\vee}$} 

Consider the action of $\Tbf \times \Tbf$ on the horocycle stack by left and right translations. We denote by $\Tbf^{\vee}$ the torus dual to $\Tbf$ defined over $\Lambda$ and by $\Frob^{\vee} : \Tbf^{\vee} \to \Tbf^{\vee}$ the morphism dual to $\Frob$.
\begin{lem}
All sheaves on $\frac{\Ubf \backslash \Gbf/\Ubf}{\Ad_{\Frob} \Tbf}$ are $\Tbf \times \Tbf$-monodromic. Moreover the monodromy map yields a $(\Tbf^\vee\sslash \Wbf)^{\Frob^{\vee}}$-linear structure on $\Ocal^{\DL}$. 
\end{lem}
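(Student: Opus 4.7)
The plan is to establish the two assertions separately: first $\Tbf \times \Tbf$-monodromicity, then the algebra structure, the latter by descent from the monodromic Hecke category $\Hh^\mon$.

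For monodromicity, I would proceed stratum by stratum. The earlier lemma identifies each stratum of the horocycle stack with $\pt/(\Tbf^{w\Frob}\ltimes \Ubf_w)$. The $\Tbf \times \Tbf$-action by left and right translations preserves strata, and on each one it factors through a finite quotient of $\Tbf \times \Tbf$ (the stabilizers contain the connected part of the acting torus), so every constructible sheaf on a stratum is automatically $\Tbf \times \Tbf$-monodromic. Since monodromicity is stable under the gluing functors $j_{w,!}, j_{w,*}$ and since the standard sheaves $\Delta_w(\rho)$ generate $\Ocal^{\DL}$ by Proposition \ref{prop:compactGenCatO}, every object of $\Ocal^{\DL}$ is $\Tbf \times \Tbf$-monodromic.

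For the algebra structure, the standard machinery of tame monodromy on a $\Tbf \times \Tbf$-monodromic category provides a canonical ring map $\mathcal{O}(\Tbf^\vee \times \Tbf^\vee) \to \End(\id_{\Ocal^{\DL}})$, and the plan is to show this factors through $\mathcal{O}((\Tbf^\vee \sslash \Wbf)^{\Frob^\vee})$ for two distinct reasons. First, the work of Bezrukavnikov--Yun (and Gouttard in the non-unipotent case) on $\Hh^\mon$ shows that the left and right $\Tbf$-monodromies on monodromic sheaves are linked by the Weyl group: on the $w$-stratum a nonzero sheaf must have monodromy characters satisfying $\chi_2 = w\chi_1$. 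Combined over all strata this forces the action on $\Hh^\mon$ to factor through $\Wbf$-invariants, yielding $\mathcal{O}(\Tbf^\vee \sslash \Wbf)$, and this structure transfers to $\Ocal^{\DL}$ via $\mathfrak{p}_!$. Second, the pushforward $\mathfrak{p}_!: \Hh^\mon \to \Ocal^{\DL}$ introduced in section \ref{sec:ComparisonHeckeCat} imposes $\Ad_\Frob\Tbf$-equivariance, which trivializes the monodromy along the twisted diagonal $t \mapsto (t, \Frob(t)^{-1})$. At the level of characters this enforces the relation $\chi_1 = \Frob^*\chi_2$, and on the dual side this corresponds precisely to the $\Frob^\vee$-fixed point condition on $\Tbf^\vee \sslash \Wbf$. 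Combining these two reductions, the monodromy factors through $\mathcal{O}((\Tbf^\vee \sslash \Wbf)^{\Frob^\vee})$ and produces the asserted linear structure.

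The main obstacle will be the precise identification in the second reduction: translating the condition $\chi_1 = \Frob^*\chi_2$ on characters of $\Tbf \times \Tbf$ into the $\Frob^\vee$-fixed scheme structure on $\Tbf^\vee \sslash \Wbf$, and checking this is compatible with the $\Wbf$-action from the first reduction. This amounts to careful bookkeeping with the duality $\Frob \leftrightarrow \Frob^\vee$ between the character and cocharacter lattices, which must be done with care since $\Frob$ is a general isogeny and need not be split; in particular the interaction between $\Frob^*$ and the $\Wbf$-action on $X^*(\Tbf)$ involves the pinning automorphism.
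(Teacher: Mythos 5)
Your first part (monodromicity via the strata being classifying spaces of finite groups) matches the paper's one-line argument exactly, just spelled out more. For the algebra structure you take a genuinely different route — the paper works intrinsically with the canonical monodromy action of $\pi_1^t(\Tbf \times \Tbf) \cong X_*(\Tbf \times \Tbf) \otimes \hat{\Z}^{(p)}(1)$ on sheaves on the horocycle stack, \emph{restricts} it to $\Ocal(\Tbf^\vee)^{\Wbf} \otimes \Ocal(\Tbf^\vee)^{\Wbf}$, and then uses the $\Ad_{\Frob}\Tbf$-equivariance to show this restricted map factors through the quotient by $(f \otimes 1 - 1 \otimes \Frob^\vee(f))$ — whereas you instead descend the structure from $\Hh^{\mon}$ via $\mathfrak{p}_!$. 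The detour is harmless but unnecessary: the $\Ad_{\Frob}\Tbf$-equivariance is already built into $\Ocal^{\DL}$, and invoking $\mathfrak{p}_!$ to ``impose'' it misattributes where the condition comes from.

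The one real imprecision is the claim that the stratum-wise relation $\chi_2 = w\chi_1$ ``forces the action on $\Hh^\mon$ to factor through $\Wbf$-invariants.'' Read literally this is false: the map $\Ocal(\Tbf^\vee \times \Tbf^\vee) \to \End(\id_{\Hh^{\mon}})$ is already nonzero (indeed injective) on $\Ocal(\Tbf^\vee) \otimes 1$, since sheaves on the open stratum realize every monodromy character, so the full action does not factor through anything smaller. What the linking relation actually buys is that once you \emph{restrict} to $\Ocal(\Tbf^\vee)^{\Wbf} \otimes \Ocal(\Tbf^\vee)^{\Wbf}$, the left and right copies act by the same endomorphism on each stratum (because $\Wbf$-invariant functions cannot distinguish $\chi_1$ from $w\chi_1$), so the restriction factors through the multiplication to $\Ocal(\Tbf^\vee \sslash \Wbf)$. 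That is the content you need, and it is a restriction followed by a collapse, not a factorization of the full monodromy. With that correction your second-reduction step (the twisted-diagonal / $\Frob^\vee$-fixed-point condition) is sound, and the bookkeeping worry you flag at the end is exactly the point the paper handles by phrasing everything in terms of the ideal $(f - \Frob^\vee(f))$ in the $\Wbf$-invariant subalgebra.
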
 

\begin{proof}
Recall that monodromic sheaves are sheaves that are locally constant along $\Tbf \times \Tbf$-orbits and tame local systems, we refer to \cite[Section 2]{EtevePaper1} for a discussion. Since all strata are classifying spaces the monodromic part is clear. Monodromic sheaves are equipped with a canonical action of $\pi_1^t(\Tbf \times \Tbf)$, the tame fundamental group of $\Tbf \times \Tbf$. This groups is isomorphic to $X_*(\Tbf \times  \Tbf) \otimes \hat{\Z}^{(p)}(1)$. After fixing a trivialization of the roots of $1$ in $\Fqb$ we get a topological generator of $\hat{\Z}^{(p)}(1)$ and an action of $\Lambda[X_*(\Tbf \times \Tbf)] = \Ocal(\Tbf^{\vee}) \otimes_{\Lambda} \Ocal(\Tbf^{\vee})$ on monodromic sheaves. We first restrict this action to an action of $\Ocal(\Tbf^{\vee})^{\Wbf} \otimes_{\Lambda} \Ocal(\Tbf^{\vee})^{\Wbf}$. Finally we observe that the $\Ad_{\Frob}(\Tbf)$-equivariance shows that the map 
$$\Ocal(\Tbf^{\vee})^{\Wbf} \otimes_{\Lambda} \Ocal(\Tbf^{\vee})^{\Wbf} \to \maps(\id_{\Ocal^{\DL}})$$
factors through the quotient of $\Ocal({\Tbf}^{\vee})^{\Wbf} \otimes_{\Lambda} \Ocal({\Tbf}^{\vee})^{\Wbf}$ by the ideal $(f - \Frob^{\vee}(f), f \in \Ocal((\Tbf^{\vee})^{\Wbf})$, that is, by $\Ocal(\Tbf^{\vee} \sslash \Wbf)^{\Frob^{\vee}})$. 
\end{proof}

\begin{rque}
It should be noted that there is a bigger scheme acting on $\Ocal^{\DL}$. However, this finer structure will not be compatible when we go to $\Gbf^{\Frob}$-representations.
\end{rque}

\begin{lem}\label{lem:pointOverLambda}
There is a bijection, depending on a choice of a trivialization of the roots of $1$ in $\Fqb$, between $\Lambda$-points of $(\Tbf^{\vee}\sslash \Wbf)^{\Frob^{\vee}}$ and $\Frob^*$-stable semisimple $\Gbf^*$-conjugacy classes of elements of order invertible in $\Lambda$. We denote this set $\Gbf^*_{\Lambda}/\mathrm{ss}$. 
\end{lem}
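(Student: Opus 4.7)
The plan is to factor the bijection through the Chevalley restriction theorem together with a careful analysis of $\Lambda$-points of the adjoint quotient. First, the Chevalley isomorphism $\Gbf^{*}\sslash \Gbf^{*} \cong \Tbf^{\vee}\sslash \Wbf$ is equivariant for the Frobenius on both sides, so it reduces the statement to showing that $\Lambda$-points of $(\Tbf^{\vee}\sslash\Wbf)^{\Frob^{\vee}}$ are in natural bijection with $\Frob^{\vee}$-stable $\Wbf$-orbits on an appropriate subset of $\Tbf^{\vee}(\Lambda)$, and then matching this subset, via the chosen trivialization of roots of $1$ in $\Fqb$, with elements of $\Tbf^*(\Fqb)$ of order invertible in $\Lambda$.

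For the first identification I would treat $\Lambda$ case by case. When $\Lambda$ is an algebraically closed field (i.e.\ $\Qlb$ or $\Flb$), the finite surjective quotient map $\Tbf^{\vee}\to \Tbf^{\vee}\sslash\Wbf$ yields $\Lambda$-points of the quotient as exactly the $\Wbf$-orbits on $\Tbf^{\vee}(\Lambda)$, and passing to $\Frob^{\vee}$-fixed points is compatible on either side. When $\Lambda = \Zlb$, I would use that $\Zlb$ is henselian and that the quotient morphism is finite to lift each $\Zlb$-point of $\Tbf^{\vee}\sslash\Wbf$ to a $\Zlb$-point of $\Tbf^{\vee}$ which is unique up to the $\Wbf$-action; the argument then proceeds as in the field case.

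For the second identification, the trivialization of roots of unity in $\Fqb$ restricts to an isomorphism between $\mu(\Lambda)$ and the subgroup of $\mu(\Fqb)$ whose orders are invertible in $\Lambda$, which via the formula $\Tbf^{\vee}(\Lambda) = X^*(\Tbf)\otimes_{\Z} \Lambda^\times$ embeds $\Tbf^{\vee}(\Lambda)$ into $\Tbf^*(\Fqb)$ with image precisely the elements of order invertible in $\Lambda$. The classical observation that every semisimple conjugacy class in $\Gbf^*(\Fqb)$ meets the maximal torus $\Tbf^*(\Fqb)$ in a single $\Wbf$-orbit, together with the compatibility of $\Frob^{\vee}$ and $\Frob^*$ under the dualities, then completes the chain of bijections. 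The main obstacle I expect is the $\Lambda=\Zlb$ case: the GIT / algebraic-closure arguments available in the field case must be replaced by a henselian lifting argument for the finite quotient morphism, and the bookkeeping of the pro-$\ell$ part of $\Zlb^{\times}$ relative to $\mu(\Zlb)$ requires care to cleanly match the ``order invertible in $\Lambda$'' condition on the dual side.
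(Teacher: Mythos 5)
The skeleton of your argument matches the paper's (which simply notes that both sets are identified with the Frobenius-stable $\Wbf$-orbits of characters $X_*(\Tbf)\to\Lambda^\times$ and cites \cite[Corollary 5.24]{DeligneLusztig}), but two of the steps as you have written them are not correct and need repair.

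First, the Chevalley isomorphism you invoke is $\Gbf^*\sslash\Gbf^* \cong \Tbf^*\sslash\Wbf$, and both sides live over $\Fqb$; it is not an isomorphism with $\Tbf^\vee\sslash\Wbf$, which is a scheme over $\Lambda$. To pass from one to the other you already need the trivialization of roots of unity, so the logical order is reversed. In fact Chevalley restriction (a statement about coordinate rings) is more than you need here: the relevant fact is the elementary one that a semisimple conjugacy class of $\Gbf^*(\Fqb)$ meets $\Tbf^*(\Fqb)$ in exactly one $\Wbf$-orbit, and that $\Frob^*$-stability of the class translates into $\Frob^*$-stability of that orbit.

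Second, and more seriously, the sentence asserting that the trivialization ``restricts to an isomorphism between $\mu(\Lambda)$ and the subgroup of $\mu(\Fqb)$ whose orders are invertible in $\Lambda$'' is false for $\Lambda = \Qlb$ and $\Lambda = \Zlb$. Both $\Qlb^\times$ and $\Zlb^\times$ contain all roots of unity of all orders, in particular $p$-power and $\ell$-power roots of unity, whereas the subgroup of $\mu(\Fqb)$ of order invertible in $\Lambda$ is the prime-to-$p$ (resp.\ prime-to-$p\ell$) part. What the trivialization actually gives you is an injection $\mu(\Fqb) \hookrightarrow \Lambda^\times$ for $\Lambda\in\{\Qlb,\Zlb\}$ (and a surjection onto $\mu(\Flb)$ with kernel the $\ell$-part for $\Lambda=\Flb$), and you must then separately impose the condition that the character has order prime to $p$ (forced by $\Frob^\vee$-stability, since a stable orbit is killed by $q^n-1$ for some $n$) and order invertible in $\Lambda$. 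With the statement as you wrote it, a Frobenius-stable $\Wbf$-orbit of characters $X_*(\Tbf)\to\Zlb^\times$ of $\ell$-power order would be sent to nothing on the $\Gbf^*$-side, so the two sides would not be in bijection; the restriction to the appropriate torsion subgroup is exactly what makes the counting come out right, and that bookkeeping is the content of \cite[Corollary 5.24]{DeligneLusztig} that the paper is quoting. Your flagging of $\Lambda=\Zlb$ as the delicate case is apt, but the delicacy sits here, in the matching of torsion, rather than in the henselian lifting step (which does work, e.g.\ because $\Zlb$ is a valuation ring with algebraically closed fraction field, so the valuative criterion produces a section of the finite quotient map).
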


\begin{proof}
Both sets are identified with the set of $\Wbf$-conjugacy classes of characters $X_*(\Tbf) \to \Lambda^{\times}$ that are stable under the Frobenius, see \cite[Corollary 5.24]{DeligneLusztig}. 
\end{proof}

\begin{corol}\label{corol:decompCatO}
The category $\Ocal^{\DL}$ splits as 
$$\Ocal^{\DL} = \bigoplus_{s \in (\Gbf^*_{\Lambda}/\mathrm{ss})^{\Frob^*}} \Ocal^{\DL,s}$$
where $\Ocal^{\DL,s}$ is the full subcategory of $\Ocal^{\DL}$ generated by $\Delta_{w, \chi}$ where the geometric conjugacy class of $(w, \chi)$ corresponds to $s$ (in the sense of \cite[Section 5]{DeligneLusztig}). 
\end{corol}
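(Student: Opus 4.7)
My plan is to exploit the $R := \Ocal((\Tbf^\vee\sslash \Wbf)^{\Frob^\vee})$-linear structure on $\Ocal^{\DL}$ furnished by the preceding lemma, and reduce the decomposition of the category to the idempotent decomposition of the ring $R$.

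First I would observe that $R$ is a finite commutative $\Lambda$-algebra. Indeed the fixed-point scheme $(\Tbf^\vee)^{\Frob^\vee}$ is finite over $\Lambda$, being Cartier dual to the characters of the finite abstract group $\Tbf^\Frob$ via the normalization of roots of unity fixed earlier; taking $\Wbf$-invariants preserves finiteness. Hence $R$ decomposes as a product of local rings $R = \prod_s R_s$ indexed by the closed points of $\Spec R$, which via Lemma \ref{lem:pointOverLambda} are in canonical bijection with $(\Gbf^*_\Lambda/\sss)^{\Frob^*}$. The corresponding central orthogonal idempotents $\{e_s\}_s$ act on $\Ocal^{\DL}$, yielding an initial decomposition $\Ocal^{\DL} = \bigoplus_s e_s\cdot \Ocal^{\DL}$ at the level of the stable category.

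The remaining and principal step is to identify the summand containing each standard sheaf $\Delta_{w,\chi}$. After the identification $\frac{\Ubf\backslash \BwB/\Ubf}{\Ad_\Frob\Tbf} = \pt/(\Tbf^{w\Frob} \ltimes \Ubf_w)$, the sheaf $j_w^*\Delta_{w,\chi}$ is the projective cover $E_\chi$ of $\chi$, and the action of the monodromy ring $\Ocal(\Tbf^\vee)^{\Wbf}\otimes \Ocal(\Tbf^\vee)^{\Wbf}$ factors on this stratum through the twisted diagonal $\Tbf^{w\Frob} \hookrightarrow \Tbf \times \Tbf$, $t\mapsto (t, w\Frob(t)^{-1})$, acting on $E_\chi$ via the generalized character $\chi$. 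A direct unwinding identifies the resulting $\Lambda$-point of $(\Tbf^\vee \sslash \Wbf)^{\Frob^\vee}$ with the image of $(w,\chi)$ under the bijection of \cite[Corollary 5.24]{DeligneLusztig} recalled in Lemma \ref{lem:pointOverLambda}, i.e.\ with its geometric semisimple conjugacy class in $\Gbf^*$.

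The main technical obstacle is precisely this last identification: several Weyl-group actions, Frobenius twists and Kummer-sheaf normalizations must be reconciled so as to match the monodromy eigenvalue of $\Delta_{w,\chi}$ with the semisimple class associated to $(w,\chi)$ in Deligne--Lusztig theory. Once this matching is in place, the full decomposition follows from the fact that the standard sheaves generate $\Ocal^{\DL}$, so each summand $\Ocal^{\DL,s}$ is generated by the $\Delta_{w,\chi}$ with geometric class equal to $s$.
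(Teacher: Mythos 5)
Your proposal is correct and follows essentially the same route as the paper: the paper's (terse) proof likewise invokes the $(\Tbf^\vee \sslash \Wbf)^{\Frob^\vee}$-linear structure from the preceding lemma together with Lemma~\ref{lem:pointOverLambda}, noting that the $\Lambda$-points index the connected components of $(\Tbf^\vee \sslash \Wbf)^{\Frob^\vee}$, which is exactly your idempotent decomposition of $R$. You have merely unpacked the one-line argument — finiteness of $R$ over $\Lambda$, factorization into local rings, matching of each $\Delta_{w,\chi}$ to its summand via the stratumwise monodromy, with the normalization delegated to \cite[Corollary 5.24]{DeligneLusztig} — which is indeed what the paper leaves implicit.
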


\begin{proof}
This follows immediately from Lemma \ref{lem:pointOverLambda} as the corresponding points index the connected components of $(\Tbf^{\vee}\sslash \Wbf)^{\Frob^{\vee}}$.
\end{proof}

\begin{rque}
In the following, we will denote by $\Ocal^{\DL, \unip}$ the direct summand corresponding to $s = 1$. 
\end{rque} 

\begin{lem}\label{lem:Decomp}
Let $A \in \Ocal^{\DL, \unip}_{\Zlb}$ then $A[\frac{1}{\ell}] = \oplus_{s_{\ell}} A_{s_{\ell}}$ where $s_{\ell}$ ranges through the set of conjugacy classes of element of order $\ell$. In particular we have
\begin{enumerate}
\item $\Delta_{w,1}^{\Zlb}[\frac{1}{\ell}] = \bigoplus_{\chi_{\ell}} \Delta_{w, \chi_{\ell}}^{\Qlb}$, 
\item $\nabla_{w,1}^{\Zlb}[\frac{1}{\ell}] = \bigoplus_{\chi_{\ell}} \nabla_{w, \chi_{\ell}}^{\Qlb}$, 
\item $T_{w, 1}^{\Zlb}[\frac{1}{\ell}] = \bigoplus_{\chi_{\ell}} T_{w, \chi_{\ell}}^{\Qlb} \oplus \bigoplus_{v < w, \chi_{\ell}} T_{v, \chi_{\ell}}^{n_{v,w,\chi_{\ell}}}$, where $n_{v,w,\chi_{\ell}}$ is some multiplicity. 
\end{enumerate}
In the previous sums the character $\chi_{\ell}$ ranges over characters of $\ell^{\infty}$-torsion of $\Tbf^{w\Frob}$. 
\end{lem}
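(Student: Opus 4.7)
The overall plan is to derive the general decomposition statement from Corollary \ref{corol:decompCatO}, and then obtain the three explicit formulas by tracking what happens to the building blocks after inverting $\ell$.

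First I would identify the finer block decomposition of $\Ocal^{\DL, \unip}_{\Zlb}[1/\ell]$. By Corollary \ref{corol:decompCatO} applied over $\Qlb$, the category $\Ocal^{\DL}_{\Qlb}$ splits along points of $(\Tbf^{\vee}\sslash\Wbf)^{\Frob^{\vee}}(\Qlb)$, which by Lemma \ref{lem:pointOverLambda} are $\Frob^*$-stable conjugacy classes of semisimple elements of prime-to-$p$ order. The $\Zlb$-decomposition is coarser: it only sees the conjugacy class of the underlying character modulo the kernel of the reduction $\Zlb^\times \to \Flb^\times$. The unipotent summand over $\Zlb$ therefore collects precisely the $\Qlb$-classes whose underlying character takes values in this kernel, equivalently, the semisimple classes of $\ell$-power order. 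This gives the first assertion.

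Next I would prove (i) and (ii) by computing the projective cover $E_1^{\Zlb}$ of the trivial representation in the unipotent block of $\Rep_{\Zlb}\Tbf^{w\Frob}$. Writing $\Tbf^{w\Frob} = H_\ell \times H_{\ell'}$ as its Sylow decomposition, the group algebra factors as $\Zlb[H_\ell] \otimes_{\Zlb}\Zlb[H_{\ell'}]$ with $\Zlb[H_{\ell'}]$ étale, so the unipotent block is the local ring $\Zlb[H_\ell]$ and $E_1^{\Zlb} = \Zlb[H_\ell]$. Inverting $\ell$ gives $\Qlb[H_\ell] = \bigoplus_{\chi_\ell} \chi_\ell$ over all characters of $H_\ell$, i.e.\ the characters of $\Tbf^{w\Frob}$ of $\ell$-power order. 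Since $j_{w,!}$ and $j_{w,*}$ commute with $-\otimes_{\Zlb}\Qlb$, formulas (i) and (ii) follow directly.

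For (iii) the strategy is a Krull--Schmidt argument. By Theorem \ref{thm:StructureCatO} and Lemma \ref{lem:CharDeltaFilt}, $T_{w,1}^{\Zlb}$ is supported on $\overline{\BwB}$ and admits a $\Delta$-flag in which $\Delta_{w,1}^{\Zlb}$ appears with multiplicity one, with all other graded pieces of the form $\Delta_{v,1}^{\Zlb}$ for $v<w$. Base change to $\Qlb$ preserves the tilting property (apply (i) and (ii) to each piece of the flag), and the Krull--Schmidt statement of Theorem \ref{thm:StructureCatO} writes $T_{w,1}^{\Zlb}[1/\ell]$ as a direct sum of indecomposables $T_{v,\chi_\ell}^{\Qlb}$. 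Support considerations force $v \leq w$; for $v=w$, the multiplicity of $T_{w,\chi_\ell}^{\Qlb}$ equals that of $\Delta_{w,\chi_\ell}^{\Qlb}$ in the $\Delta$-flag (since lower $T_{v,\chi'_\ell}^{\Qlb}$ cannot contribute to this top $\Delta$), and (i) tells us this multiplicity is exactly one. The remaining summands contribute the $v<w$ terms with multiplicities $n_{v,w,\chi_\ell}$ read off from the $\Delta$-flag. I expect the main obstacle to be the careful bookkeeping of the change in granularity of blocks between $\Zlb$ and $\Qlb$, and in particular matching the $\ell$-power order characters of $\Tbf^{v\Frob}$ with the $\Qlb$-blocks cut out inside $\Ocal^{\DL,\unip}_{\Zlb}[1/\ell]$; once this is in place, the structural input is entirely from the highest weight formalism already established.
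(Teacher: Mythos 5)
Your proposal is correct and follows essentially the same route as the paper: identify the relevant $\Qlb$-blocks inside $\Ocal^{\DL,\unip}_{\Zlb}[1/\ell]$ via Lemma \ref{lem:pointOverLambda} and Corollary \ref{corol:decompCatO}, deduce (i) and (ii) from the compatibility of $j_{w,!},j_{w,*}$ with base change, and obtain (iii) from the observation that $T^{\Zlb}_{w,1}[1/\ell]$ is tilting, supported on $\overline{\BwB}$, and contains each $\Delta^{\Qlb}_{w,\chi_\ell}$ exactly once, then invoke the Krull--Schmidt classification of Theorem \ref{thm:StructureCatO}. You simply spell out the two points the paper leaves implicit (the explicit Sylow computation of $E_1^{\Zlb}\otimes\Qlb$ and the support/multiplicity bookkeeping on the top stratum), which is fine.
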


\begin{proof}
The first point follows from the comparison between the $\Zlb$ and $\Qlb$-versions of Lemma \ref{lem:pointOverLambda} and Corollary \ref{corol:decompCatO}. The decompositions of $\Delta_{w,1}^{\Zlb}$ and $\nabla_{w,1}^{\Zlb}$ follows from the compatibility of the formation of the functors $j_{w,!}$ and $j_{w,*}$ with change of coefficients. The decomposition of the tilting objects follows from the first observation that $T_{w,1}^{\Zlb}[\frac{1}{\ell}]$ is tilting by the decomposition of the $\Delta$ and $\nabla$. Furthermore $T_{w,1}^{\Zlb}[\frac{1}{\ell}]$ is supported on the closure of $\BwB$ and contains each of the $\Delta_{w,\chi_{\ell}}^{\Qlb}$ with multiplicity one, the rest of the statement follows from the classification Theorem \ref{thm:StructureCatO}.
\end{proof}

\subsection{Mixed Deligne--Lusztig category $\Ocal$}\label{sec:MixedCategoryO}

We equip the stack $\frac{\Ubf \backslash \Gbf/\Ubf}{\Ad_{\Frob} \Tbf}$ with the endomorphism $\Frob^{\delta}$ where $\delta$ is minimal such that the pair $(\Gbf, \Frob^{\delta})$ is a split reductive group over $\F_{q^{\delta}}$. All mixed structures are done with respect to this Frobenius. Furthermore, the Bruhat stratification is stable under $\Frob^{\delta}$. We define the category $\Ocal^{\DL, \mix}$ to be the full subcategory of constructible mixed $\Qlb$-sheaves on $\frac{\Ubf \backslash \Gbf/\Ubf}{\Ad_{\Frob}\Tbf}$ generated as a triangulated category by Tate twists of pure $\IC$-sheaves of weight $0$. There is a natural forgetful functor 
$$\omega : \Ocal^{\DL, \mix} \to \Ocal_{\cons}^{\DL}.$$

\begin{lem}\label{lem:ExistenceMixedRefinements}
For all pairs $(w, \chi)$ where $w \in \Wbf$ and $\chi$ is a character of $\Tbf^{w\Frob}$, there exist objects $\Delta_{w,\chi}^{\mix}, \nabla_{w,\chi}^{\mix}, \IC_{w, \chi}^{\mix}$ and $T_{w,\chi}^{\mix}$ whose images under $\omega$ are respectively $\Delta_{w,\chi}, \nabla_{w,\chi}, \IC_{w, \chi}$ and $T_{w,\chi}$. 
\end{lem}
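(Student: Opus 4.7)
The plan is to build the mixed refinements of $\Delta$, $\nabla$ and $\IC$ directly from a mixed lift of $\chi$ on each stratum, and to obtain the tilting refinements by pushing forward known mixed tilting sheaves from the free monodromic Hecke category.

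\emph{Step 1 (Mixed lift of $\chi$ on each stratum).} Since $(\Gbf,\Frob^{\delta})$ is split, $\Frob^{\delta}$ fixes every element of $\Wbf$ and hence preserves each Bruhat stratum $\pt/(\Tbf^{w\Frob}\ltimes\Ubf_w)$. Under the equivalence with $\DD(\Rep_{\Qlb}\Tbf^{w\Frob})$, mixed sheaves correspond to Weil-representations of $\Tbf^{w\Frob}$, i.e.\ $\Tbf^{w\Frob}$-representations equipped with a compatible $\Frob^{\delta}$-equivariant structure. Any character $\chi$ admits such a lift $\chi^{\mix}$, pure of weight $0$, obtained by letting $\Frob^{\delta}$ act by an appropriate root of unity on the (possibly enlarged) orbit of $\chi$.

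\emph{Step 2 (Standard, costandard, and $\IC$).} Set
\[
\Delta_{w,\chi}^{\mix} = j_{w,!}\chi^{\mix}[\ell(w)](\tfrac{\ell(w)}{2}), \quad \nabla_{w,\chi}^{\mix} = j_{w,*}\chi^{\mix}[\ell(w)](\tfrac{\ell(w)}{2}), \quad \IC_{w,\chi}^{\mix} = j_{w,!*}\chi^{\mix}[\ell(w)](\tfrac{\ell(w)}{2}).
\]
Since $j_{w,!},j_{w,*},j_{w,!*}$ are defined in the mixed setting and commute with $\omega$, the required identifications are immediate. These objects lie in $\Ocal^{\DL,\mix}$: the $\IC^{\mix}$ version is pure of weight $0$ by construction of the intermediate extension, while $\Delta^{\mix}$ and $\nabla^{\mix}$ admit weight filtrations whose graded pieces are Tate twists of objects of the form $\IC_{v,\psi}^{\mix}$, so they are automatically in the triangulated subcategory generated by these.

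\emph{Step 3 (Tilting).} Invoke the existence of mixed indecomposable tilting sheaves $T_{w,\chi}^{\mon,\mix}$ in the free monodromic Hecke category, established in \cite{BezrukavnikovYun} in the unipotent case and \cite{Gouttard} in general. Define
\[
T_{w,\chi}^{\mix} = \mathfrak{p}_! T_{w,\chi}^{\mon,\mix}[\dim\Tbf](\tfrac{\dim\Tbf}{2}).
\]
Applying Lemma \ref{lem:LinkWithHeckeCat} in the mixed setting, $T_{w,\chi}^{\mix}$ inherits a $\Delta^{\mix}$-filtration from its monodromic lift, and is therefore tilting in $\Ocal^{\DL,\mix}$. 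Its image $\omega(T_{w,\chi}^{\mix})$ is supported on $\overline{\BwB}$, contains $\Delta_{w,\chi}$ with multiplicity one in its $\Delta$-filtration, and is indecomposable by the local-algebra argument of Remark \ref{rq:IndecomposibilityOfTilting} (the endomorphism ring being a quotient of a local ring, regardless of the mixed structure); hence it coincides with $T_{w,\chi}$ by the uniqueness part of Theorem \ref{thm:StructureCatO}.

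The main obstacle is Step 1: one must arrange the mixed structure on $\chi^{\mix}$ to be compatible with the mixed structures already fixed on both sides of Lemma \ref{lem:LinkWithHeckeCat}, so that Step 3 matches Step 2 on the nose. This is handled uniformly by pinning down all lifts to be pure of weight $0$, which leaves no ambiguity beyond a Tate twist that can then be absorbed into the stated shifts.
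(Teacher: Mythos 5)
Your proof takes essentially the same route as the paper's: mixed standard and costandard objects via the fact that $j_{w,!}$, $j_{w,*}$ preserve mixed sheaves, the mixed $\IC$ via the weight formalism of \cite{BBD}, and mixed tiltings by pushing forward mixed free monodromic tiltings along $\mathfrak{p}$; your Step 1 merely makes explicit the mixed lift of the character on a stratum that the paper treats as obvious. One caveat: in Step 3 you attribute the existence of \emph{mixed} non-unipotent free monodromic tiltings to \cite{Gouttard}, but the paper itself notes after Conjecture \ref{conj:CharacterMixedTilting} that \cite{Gouttard} does not treat the mixed setting --- the paper's own proof of this lemma cites only \cite{BezrukavnikovYun} (which covers the unipotent case), so the non-unipotent mixed case is implicitly taken as routine in both arguments, and the extra \cite{Gouttard} citation in your Step 3 overstates what that reference establishes.
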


\begin{proof}
The statement for the standard and costandard sheaves follows from the fact that the functors $j_{w,!}$ and $j_{w,*}$ preserve mixed sheaves. The statement for the $\IC$-sheaves is clear by \cite{BBD}. Finally the statement for the tilting follows from \cite{BezrukavnikovYun}, as the tilting sheaves are obtained by pushing along $\Ubf \backslash \Gbf/\Ubf \to \frac{\Ubf \backslash \Gbf/\Ubf}{\Ad_{\Frob}\Tbf}$ the free monodromic tilting sheaves which are mixed. 
\end{proof}

\begin{rque}
The mixed refinements in Lemma \ref{lem:ExistenceMixedRefinements} are uniquely characterized up to isomorphism by the fact that all four collections of objects are perverse, the sheaf $\IC_{w, \chi}^{\mix}$ is a simple mixed perverse sheaf and the object $T_{w,\chi}^{\mix}$ is a mixed tilting sheaf. 
\end{rque}

Let $\Hh$ denote the generic Hecke algebra of $\Wbf$ over $\Z[v^{\pm 1}]$. This is the algebra generated by elements $(T_w)$ subject to the relations 
\begin{enumerate}
\item $T_wT_{w'} = T_{ww'}$ if $\ell(ww') = \ell(w) + \ell(w')$, 
\item $T_s^2 = v^{-2}T_e + (v^{-2}-1)T_s$ if $s \in \Wbf$ is a simple reflection. 
\end{enumerate}
We denote by $H_w = v^{\ell(w)}T_w$ and we use the notation of \cite{SoergelHeckeAlgebra}. Recall from \cite{KazhdanLusztig} that there is an anti-involution 
$$\overline{(-)} : \Hh \to \Hh$$ 
satisfying $\overline{v} = v^{-1}$ and $\overline{H_w}= H^{-1}_{w^{-1}}$. There are now two bases of $\Hh$ that are self dual, they are denoted by $(C_w)$ and $(C'_w)$ in \emph{loc. cit.}. We shall use the notation of \cite{SoergelHeckeAlgebra}. 
\begin{thm}[\cite{KazhdanLusztig}]
For all $w \in \Wbf$, there exist two unique self dual elements $\underline{H}_w$ and $\underline{\tilde{H}}_w$ of $\Hh$ such that 
\begin{enumerate}
\item $\underline{H}_w \in H_w + \sum_{y < w} v\Z[v]H_y$, 
\item $\underline{\tilde{H}}_w \in H_w + \sum_{y < w} v^{-1}\Z[v^{-1}]H_y$.
\end{enumerate}
\end{thm}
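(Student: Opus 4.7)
The plan is to follow the original Kazhdan--Lusztig strategy: establish uniqueness via a self-duality argument, then existence by induction on Bruhat length.

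For uniqueness of $\underline{H}_w$, suppose $h, h'$ are two candidates and consider $d := h - h' = \sum_{y < w} d_y H_y$, which is self-dual with each $d_y \in v\Z[v]$. A short induction shows $\overline{H_z} \in H_z + \sum_{y<z}\Z[v^{\pm 1}]H_y$ (using $\overline{H_z} = H_{z^{-1}}^{-1}$ and the Hecke relations), so if $y_0$ is maximal in the support of $d$, the coefficient of $H_{y_0}$ in $\overline{d}$ equals $\overline{d_{y_0}}$. Self-duality then forces $d_{y_0} = \overline{d_{y_0}}$, so $d_{y_0}$ is simultaneously in $v\Z[v]$ and fixed by $v \mapsto v^{-1}$, hence is zero. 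Iterating down the Bruhat order yields $d = 0$. The case of $\underline{\tilde{H}}_w$ is strictly symmetric.

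For existence, I proceed by induction on $\ell(w)$. The base case $w = e$ is $\underline{H}_e = 1$. For a simple reflection $s$, the Hecke relation $H_s^2 = 1 + (v^{-1} - v)H_s$ gives $\overline{H_s} = H_s^{-1} = H_s + (v - v^{-1})$, whence $\underline{H}_s = H_s + v$ is the unique self-dual element of the required form. For general $w$ with $\ell(w) \geq 1$, I pick a simple reflection $s$ with $sw < w$ and form $h := \underline{H}_s \cdot \underline{H}_{sw}$. This is self-dual (the bar is a ring anti-involution and both factors are self-dual), and using the multiplication rules $H_s H_y = H_{sy}$ (when $sy > y$) or $H_s H_y = H_{sy} + (v^{-1} - v)H_y$ (when $sy < y$), one checks that $h \in H_w + \sum_{y<w}\Z[v^{\pm 1}] H_y$ with leading coefficient $1$. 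I then modify $h$ iteratively, starting from the maximal $y < w$ in its support: self-duality of the running expression forces the $v\mapsto v^{-1}$--fixed part of the coefficient of $H_y$ to equal some integer $a_y$ (determined by the already-corrected higher coefficients via the $R$-polynomials encoding $\overline{H_z}$ in the $H_y$ basis), and subtracting $a_y \underline{H}_y$ cancels this part without disturbing anything strictly above $y$. After finitely many steps all coefficients lie in $v\Z[v]$, producing $\underline{H}_w$.

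For the second basis $\underline{\tilde{H}}_w$, I either rerun the same inductive construction with the roles of $v$ and $v^{-1}$ swapped, starting from $\underline{\tilde{H}}_s = H_s - v^{-1}$, or I transfer $\underline{H}_w$ via the $\Z$-algebra involution of $\Hh$ sending $v \mapsto v^{-1}$ and $T_w \mapsto (-1)^{\ell(w)} T_w$, which intertwines the two normalization conventions. The main obstacle is the bookkeeping in the correction step of the existence argument: one needs to check that at each stage the ``self-dual part'' of the offending coefficient is a well-defined integer which can be cancelled by an integer multiple of the inductively constructed $\underline{H}_y$. This ultimately reduces to the elementary fact that the augmentation $p \mapsto p(v{=}1)$, or more precisely the decomposition of self-dual Laurent polynomials into $\Z \oplus (v - v^{-1})\Z[v, v^{-1}]^{\overline{\ \cdot\ }}$, identifies the obstruction to lying in $v\Z[v]$ as a single integer at each step.
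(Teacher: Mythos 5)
The paper offers no proof of this theorem (it is cited to Kazhdan--Lusztig), so I am evaluating your argument against the standard one. Your overall strategy — uniqueness via triangularity of the bar involution, existence via induction through $\underline{H}_s\underline{H}_{sw}$ followed by integer corrections — is the right one, and the uniqueness half and the base cases are fine. But the existence step contains a genuine gap.

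You correctly record the multiplication rules, but then only conclude $h := \underline{H}_s\underline{H}_{sw} \in H_w + \sum_{y<w}\Z[v^{\pm 1}]H_y$. That containment is true but too weak to make the correction step work; the crucial fact, which falls out of the very same computation, is that $h \in H_w + \sum_{y<w}\Z[v]H_y$. Indeed, writing $\underline{H}_{sw} = H_{sw} + \sum_{y<sw}p_y H_y$ with $p_y \in v\Z[v]$ and using $\underline{H}_s H_y = H_{sy} + vH_y$ (if $sy>y$) or $H_{sy} + v^{-1}H_y$ (if $sy<y$), one finds that every coefficient of $h$ lies in $\Z[v]$, and the only coefficients not already in $v\Z[v]$ are of the form $p_y v^{-1}$ with $sy<y$, whose constant term is the integer $\mu(y)$. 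This is the entire content of the correction step: because each offending coefficient lies in $\Z[v]$, its failure to lie in $v\Z[v]$ is measured by a single integer (its value at $v=0$), which is exactly cancelled by subtracting that integer multiple of the inductively known $\underline{H}_y$, while never introducing new terms above $y$ or negative powers of $v$.

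Your attempted substitute for this — the claim that self-dual Laurent polynomials decompose as $\Z \oplus (v-v^{-1})\Z[v,v^{-1}]^{\overline{\,\cdot\,}}$, so that the obstruction to lying in $v\Z[v]$ is a single integer — is false. If $q$ is bar-fixed then $(v-v^{-1})q$ is bar-\emph{anti}fixed, so the right-hand side of your proposed decomposition contains no nonzero self-dual elements beyond the constants. If instead $\Z[v,v^{-1}]^{\overline{\,\cdot\,}}$ is meant to denote bar-\emph{anti}fixed elements, then the decomposition still fails: $p = v + v^{-1}$ is self-dual, and $p-a$ is divisible by $v-v^{-1}$ only if it vanishes at both $v=1$ and $v=-1$, which would require $a=2$ and $a=-2$ simultaneously. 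So self-duality alone does not single out the correcting integer; the extra input you need is positivity of the coefficients (membership in $\Z[v]$), which you already have access to from the multiplication rules you quoted. If you replace ``$\Z[v^{\pm 1}]$'' by ``$\Z[v]$'' in your formula for $h$ and drop the last sentence, the argument closes correctly.
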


\begin{rque}[\protect{\cite[Remark 2.4]{SoergelHeckeAlgebra}}]\label{rq:Defb}
We denote by $b : \Hh \to \Hh$ the involution defined by 
$$b(H_w) = H_w, b(v) = -v^{-1}.$$ Then there is an equality
$$b(\underline{H}_w) = \underline{\tilde{H}}_w.$$
\end{rque}

\begin{lem}\label{lem:Categorification}
There is an isomorphism of abelian groups 
$$[-] : \Hh \simeq K_0(\Ocal^{\DL, \mix, \unip}_{\Qlb}).$$
Moreover the following holds 
$$[\Delta_{w,1}^{\mix}] = H_w, [\nabla_{w,1}^{\mix}] = H_{w^{-1}}^{-1}, [T_{w,1}^{\mix}] = \underline{H}_w, [\IC_{w,1}^{\mix}] = \underline{\tilde{H}}_w, [-(\frac{1}{2})] = v^{-1}[-].$$
\end{lem}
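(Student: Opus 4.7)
The plan is to first determine the underlying $\Z[v^{\pm 1}]$-module structure of $K_0(\Ocal^{\DL,\mix,\unip}_{\Qlb})$ via the Bruhat stratification, then transport the identifications with Kazhdan--Lusztig basis elements from the free monodromic Hecke category via the pushforward $\mathfrak{p}_!$ of Lemma \ref{lem:LinkWithHeckeCat}.

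For the module structure, I would argue as follows. Over $\Qlb$ the category $\Rep_{\Qlb} \Tbf^{w\Frob}$ is semisimple, and its unipotent block is generated by the trivial character; hence the unipotent mixed compact sheaves on the stratum $\pt/(\Tbf^{w\Frob} \ltimes \Ubf_w)$ have Grothendieck group the free rank one $\Z[v^{\pm 1}]$-module on the mixed trivial local system, with $v^{-1}$ implementing the half Tate twist $(\frac{1}{2})$. An induction on the Bruhat order, using the excision triangles $k_{w,!}k_w^*A \to A \to i_{w,!}i_w^*A$ from the proof of Lemma \ref{lem:CharDeltaFilt} together with the fact that $j_{w,!}$ of the mixed trivial sheaf is $\Delta_{w,1}^{\mix}$ up to shift and twist, then identifies $K_0(\Ocal^{\DL,\mix,\unip,\omega}_{\Qlb})$ with the free $\Z[v^{\pm 1}]$-module on $\{[\Delta_{w,1}^{\mix}]\}_{w \in \Wbf}$. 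Declaring $[\Delta_{w,1}^{\mix}] \mapsto H_w$ then defines a $\Z[v^{\pm 1}]$-linear isomorphism $\Hh \xrightarrow{\sim} K_0(\Ocal^{\DL,\mix,\unip}_{\Qlb})$, and the relation $[-(\frac{1}{2})] = v^{-1}[-]$ is built in.

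The identifications of the remaining three families then follow by transport of structure from the free monodromic Hecke category $\Hh^{\mon}$. By Lemma \ref{lem:LinkWithHeckeCat}, the (shifted, twisted) pushforward $\mathfrak{p}_!$ sends $\Delta_{w,1}^{\mon}$ and $\nabla_{w,1}^{\mon}$ to $\Delta_{w,1}$ and $\nabla_{w,1}$, and sends an indecomposable free monodromic tilting $T_{w,1}^{\mon}$ to an indecomposable tilting supported on $\overline{\BwB}$ whose $\Delta$-flag has $\Delta_{w,1}$ with multiplicity one, hence to $T_{w,1}$ by the uniqueness in Theorem \ref{thm:StructureCatO}; a parallel argument handles $\IC_{w,1}^{\mix}$ via the standard characterization of simple perverse sheaves. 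The categorification theorem for $\Hh^{\mon}$ of \cite{BezrukavnikovYun}, as rephrased in \cite{SoergelHeckeAlgebra}, identifies the classes of $\Delta_{w,1}^{\mon}, \nabla_{w,1}^{\mon}, T_{w,1}^{\mon}, \IC_{w,1}^{\mon}$ with $H_w, H_{w^{-1}}^{-1}, \underline{H}_w, \underline{\tilde{H}}_w$ respectively. Since $\mathfrak{p}_!$ induces a $\Z[v^{\pm 1}]$-linear map on $K_0$ matching standards to standards, it transports all four identifications.

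The hard part is the identification of the tilting class with $\underline{H}_w$, which relies on controlling the $\Delta$-flag multiplicities of $T_{w,1}^{\mix}$. Specifically, one needs that the multiplicity of $\Delta_{y,1}^{\mix}$ in a $\Delta$-flag of $T_{w,1}^{\mix}$, viewed as an element of $\Z[v^{\pm 1}]$ via the Tate twist grading, lies in $v\Z[v]$ for $y < w$ and equals $1$ for $y = w$; this is exactly the defining property of $\underline{H}_w$ in Soergel's conventions. This positivity is ultimately the purity/parity statement for free monodromic tilting sheaves in \cite{BezrukavnikovYun}, which survives pushforward along $\mathfrak{p}_!$ because $\mathfrak{p}^![-\dim \Tbf]$ is $t$-exact (as $\mathfrak{p}$ is smooth of relative dimension $\dim \Tbf$) and hence preserves the weight filtration on Hom-spaces between tiltings, ruling out cancellations when one passes from $\Ubf \backslash \Gbf / \Ubf$ to the horocycle stack.
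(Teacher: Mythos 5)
Your argument for the underlying $\Z[v^{\pm 1}]$-module structure, and for the identifications of $[\Delta_{w,1}^{\mix}]$, $[\nabla_{w,1}^{\mix}]$, $[T_{w,1}^{\mix}]$ via the pushforward $\mathfrak{p}_!$ from the free monodromic Hecke category, follows essentially the same route as the paper (using Lemma \ref{lem:LinkWithHeckeCat}, Remark \ref{rq:IndecomposibilityOfTilting}, and the Bezrukavnikov--Yun categorification). The final paragraph about weight filtrations on Hom-spaces and ruling out cancellations is unnecessary: once $\mathfrak{p}_![\dim\Tbf](\frac{\dim\Tbf}{2})$ is known to match mixed standards with mixed standards and to send indecomposable tiltings to indecomposable tiltings, the induced map on $K_0$ is a $\Z[v^{\pm1}]$-linear isomorphism sending $H_w \mapsto H_w$, so it transports $\underline{H}_w$ automatically; no purity argument is needed on top of what Bezrukavnikov--Yun already supply.

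The genuine gap is in the claim that ``a parallel argument handles $\IC_{w,1}^{\mix}$'' by transporting $\IC_{w,1}^{\mon}$ along $\mathfrak{p}_!$. The free monodromic Hecke category $\Hh^{\mon}$ does \emph{not} have a good theory of simple objects --- the paper itself emphasizes this in the introduction (``$\Hh^{\mon}$ has standard, costandard and tilting objects but usually not a good theory of simple object''), because the relevant category involves pro-objects/completions in which intersection complexes do not naturally live. And even if one produced some candidate simple object there, $\mathfrak{p}_!$ is a non-proper pushforward and is not perverse $t$-exact, so there is no reason it would send simple perverse sheaves to simple perverse sheaves. The paper instead obtains $[\IC_{w,1}^{\mix}] = \underline{\tilde H}_w$ from the \emph{equivariant} Hecke category $\Hh^{\eq,\unip,\mix}$ via the pullback $\mathfrak{q}^*$ of Lemma \ref{lem:LinkWithHeckeCat}: $\mathfrak{q}^*$ is the forgetful functor along a smooth quotient (hence $t$-exact up to shift), preserves simple perverse sheaves, induces an isomorphism on $K_0$ matching standards to standards, and in $\Hh^{\eq,\unip,\mix}$ the identification of simple classes with $\underline{\tilde H}_w$ is classical. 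You should replace your $\IC$ step with this $\mathfrak{q}^*$-transport; the rest stands.
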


\begin{proof}
Since $\Ocal^{\DL, \mix, \unip}$ is stratified by the categories $\DD(\pt/\Tbf^{w\Frob})^{\unip, \mix} \simeq \DD(\pt)^{\mix}$, we get an isomorphism 
$$\Hh \simeq K_0(\Ocal^{\DL, \mix, \unip}_{\Qlb})$$
satisfying $[\Delta_{w,1}^{\mix}] = H_w$. 
Similarly, there are isomorphisms between $\Hh$ and the  Grothendieck groups of the two mixed realizations of the Hecke categories $\Hh^{\eq, \unip, \mix}$ and $\Hh^{\mon, \unip, \mix}$ constructed in \cite{BezrukavnikovYun}, that is we have 
$$K_0(\Hh^{\eq, \unip, \mix}) = \Hh = K_0(\Hh^{\mon, \unip, \mix}).$$
The quotient maps 
$$\Ubf \backslash \Gbf/\Ubf \to \frac{\Ubf \backslash \Gbf/\Ubf}{\Ad_{\Frob}\Tbf} \to \Bbf \backslash \Gbf/\Bbf$$
yield isomorphisms of Grothendieck groups (induced by pushforward for the first one and pullback for the second one) 
$$K_0(\Hh^{\mon, \unip, \mix}) = K_0(\Ocal^{\DL,\unip, \mix}) = K_0(\Hh^{\eq, \unip, \mix}).$$
Since the pushforward $\Hh^{\mon, \unip, \mix} \to \Ocal^{\DL, \unip, \mix}$ sends standard (resp. costandard, resp. tilting) to standard (resp. costandard, resp. tilting), we deduce from the categorification theorem of \emph{loc. cit.} that  $[\nabla_{w,1}^{\mix}] = H_{w^{-1}}^{-1}$ and $[T_{w,1}^{\mix}] = \underline{H}_w$. Similarly, the pullback $\Hh^{\eq, \unip, \mix} \to \Ocal^{\DL, \unip, \mix}$ sends standard (resp. costandard, resp. simple) to standard (resp. costandard, resp. simple) which allows us to deduce that $[\IC_{w,1}^{\mix}] = \underline{\tilde{H}}_w$.
\end{proof}

\begin{rque}
The reader should note that while there is a ring structure on $\Hh_q$, there is a priori no ring structure or action defined on the RHS. However this equivalence is linear over $\Z[v^{\pm 1}]$.
\end{rque}

More generally let $\Hh_{\mon}$ be the monodromic Hecke algebra defined in \cite[1.4]{LusztigConjClasses} and see \cite[Section 3.14]{LusztigYun} for an account. This is the $\Z[v^{\pm 1}]$-algebra with generators $(T_w, 1_{\chi})$ where $\chi \in \Ch(T)$ subject to the following relations 
\begin{enumerate}
\item $T_{w}T_{w'} = T_{ww'}$ if $\ell(w) + \ell(w') = \ell(ww')$, 
\item $1_{\chi}1_{\chi'} = \delta_{\chi,\chi'}1_{\chi}$,
\item $T_w1_{\chi} = 1_{w\chi}T_w$, 
\item $T_s^2 = v^2T_s + (v^2 - 1)\sum_{s \in \Wbf_s^{\circ}}T_s1_{\chi}$, we refer to \cite{LusztigYun} for the notation $\Wbf_s^{\circ}$. 
\item $\sum_{\chi} 1_{\chi} = 1$, note that combined with the second relation the elements $1_{\chi}$ form a set of orthogonal idempotents and the sum makes sense. 
\end{enumerate}
We set $H_w = v^{-\ell(w)}T_w$ and we note that $\Hh$ is the direct summand attached to the idempotent $1_{\triv}$. We will also denote by $H_{w, \chi} = H_w1_{\chi}$. 

As in the unipotent case, this Hecke algebra is equipped with an involution 
$$\overline{(-)} : \Hh_{\mon} \to \Hh_{\mon}, v^nH_{w}1_{\chi} \mapsto v^{-n}H^{-1}_{w^{-1}}1_{\chi}.$$
We refer to \cite[Section 3.14]{LusztigYun} for a discussion on the proof of the next theorem. 
\begin{thm}
There exist two self-dual bases $\underline{H}_{w, \chi}$ and $\underline{\tilde{H}}_{w,\chi}$ of $\Hh_{\mon}$ such that 
\begin{enumerate}
\item $\underline{H}_{w, \chi} \in H_{w, \chi} + \sum_{y < w, \chi'} v\Z[v]H_{y, \chi'}$, 
\item $\underline{\tilde{H}}_{w,\chi} \in H_{w, \chi} + \sum_{y < w, \chi'} v^{-1}\Z[v^{-1}]H_{y, \chi'}$.
\end{enumerate}
\end{thm}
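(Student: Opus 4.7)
The plan is to adapt the classical Kazhdan--Lusztig construction to the monodromic Hecke algebra $\Hh_{\mon}$, following the approach outlined in \cite[Section 3.14]{LusztigYun}. The first observation is that the analogue of the involution $b$ from \cite[Remark 2.4]{SoergelHeckeAlgebra} extends to $\Hh_{\mon}$ via $b(v) = -v^{-1}$ and $b(H_{w,\chi}) = H_{w,\chi}$, and exchanges the two proposed bases; hence it suffices to construct the first family $\underline{H}_{w,\chi}$, after which setting $\underline{\tilde{H}}_{w,\chi} := b(\underline{H}_{w,\chi})$ produces the second family automatically satisfying condition (ii).

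Uniqueness is the standard triangular argument. I would first verify that $\overline{H_{y,\chi'}} - H_{y,\chi'}$ lies in $\sum_{z<y,\chi''} \Z[v,v^{-1}] H_{z,\chi''}$, i.e.\ the matrix of the bar involution in the $H$-basis is upper unitriangular with respect to the Bruhat order on the $w$-index. If two self-dual elements both satisfy (i), their difference $h$ is self-dual and lies in $\sum_{y<w,\chi'} v\Z[v] H_{y,\chi'}$; decomposing $h$ by its maximal-length support and applying bar shows that the leading coefficient must lie in $v\Z[v] \cap v^{-1}\Z[v^{-1}] = 0$, and descending induction gives $h = 0$.

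For existence I would induct on $\ell(w)$, with base case $\underline{H}_{e,\chi} = 1_{\chi}$. For the inductive step, choose a simple reflection $s$ with $sw < w$ and form the product $\underline{H}_{s, w\chi} \cdot \underline{H}_{sw,\chi}$, where $\underline{H}_{s,\chi'}$ is constructed by hand from the modified quadratic relation and the commutation rule $T_s 1_{\chi} = 1_{s\chi} T_s$. Such a product is manifestly self-dual and expands as $H_{w,\chi} + \sum_{y<w,\chi'} c_{y,\chi'}(v) H_{y,\chi'}$ with $c_{y,\chi'} \in \Z[v,v^{-1}]$. The standard Kazhdan--Lusztig correction procedure, subtracting a suitable $\Z$-linear combination of the previously constructed $\underline{H}_{y,\chi'}$ with $y<w$, then produces $\underline{H}_{w,\chi}$ with all lower coefficients in $v\Z[v]$. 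The basis property follows at once from the unitriangular transition matrix from $\{H_{w,\chi}\}$ to $\{\underline{H}_{w,\chi}\}$.

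The main obstacle will be the construction of the base case $\underline{H}_{s,\chi'}$: because the quadratic relation only receives contributions from characters in $\Wbf_s^{\circ}$, the correct formula genuinely depends on whether $\chi' \in \Wbf_s^{\circ}$ and on whether $s\chi' = \chi'$, so one must handle several sub-cases. Combined with the character shift under $T_s 1_{\chi} = 1_{s\chi} T_s$, this bookkeeping is substantially more delicate than in the unipotent algebra $\Hh$ treated in \cite{KazhdanLusztig, SoergelHeckeAlgebra}, and is precisely the calculation carried out in \cite[Section 3.14]{LusztigYun} which we invoke.
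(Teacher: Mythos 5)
The paper's own treatment of this theorem is just a citation: it refers the reader to \cite[Section 3.14]{LusztigYun} for the proof, so your fleshed-out sketch of the Kazhdan--Lusztig construction adapted to the monodromic algebra is essentially a reconstruction of the same argument, and you also end by invoking the same source, so the approaches coincide.

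One small bookkeeping slip worth flagging in the inductive step: since $H_{w,\chi}=H_w 1_\chi=1_{w\chi}H_w$, the product that has $H_{w,\chi}$ as its leading term (when $sw<w$) is $\underline{H}_{s,\,sw\chi}\cdot\underline{H}_{sw,\chi}$, not $\underline{H}_{s,\,w\chi}\cdot\underline{H}_{sw,\chi}$; with your indexing the two idempotents do not match unless $w\chi$ happens to be $s$-fixed. This does not affect the validity of the overall argument, which is ultimately delegated to \cite[Section 3.14]{LusztigYun} both in your proposal and in the paper, but the idempotent matching is exactly the kind of case analysis you correctly identify as the delicate part of the construction.
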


As in the monodromic case we have a categorification theorem 
\begin{thm}\label{thm:ThmCategorificationMonodromicHeckeAlg}
There exists an injective map 
$$K_0(\Ocal^{\DL, \mix}) \to \Hh_{\mon}$$
such that 
$$[\Delta_{w,\chi}^{\mix}] = H_{w,\chi}, [\nabla_{w,\chi}^{\mix}] = H_{w^{-1},\chi}^{-1}, [\IC_{w,\chi}^{\mix}] = \underline{\tilde{H}}_{w,\chi}.$$
\end{thm}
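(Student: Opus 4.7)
The plan is to extend the argument of Lemma \ref{lem:Categorification} from the unipotent block to the whole category $\Ocal^{\DL, \mix}$, using the full monodromic and equivariant Hecke categories in place of their unipotent summands. First I would identify $K_0(\Ocal^{\DL, \mix})$ as a free $\Z[v^{\pm 1}]$-module on the basis $\{[\Delta_{w, \chi}^{\mix}]\}_{w, \chi}$, where $\chi$ runs over characters of $\Tbf^{w\Frob}$: this follows from the Bruhat stratification together with the $t$-exact equivalence $\DD(\pt/(\Tbf^{w\Frob} \ltimes \Ubf_w))^{\mix} = \DD(\Rep_{\Qlb} \Tbf^{w\Frob})^{\mix}$ and the block decomposition of the latter into copies of $\DD(\pt)^{\mix}$ indexed by irreducible characters. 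Via the fixed trivialization of roots of unity in $\Fqb$, each such $\chi$ corresponds to a $w\Frob$-stable character of $\pi_1^t(\Tbf)$ and hence determines an element $H_{w, \chi} \in \Hh_{\mon}$. Defining the map on basis elements by $[\Delta_{w, \chi}^{\mix}] \mapsto H_{w, \chi}$ then gives the desired $\Z[v^{\pm 1}]$-linear map, and injectivity is automatic since the $H_{w, \chi}$ are part of the standard basis of $\Hh_{\mon}$.

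The formulas for costandards and $\IC$-sheaves I would deduce by transporting them from the two Hecke categories that are already known to categorify $\Hh_{\mon}$. For costandards, I would use the pushforward $\mathfrak{p}_!: \Hh^{\mon} \to \Ocal^{\DL}$ of Section \ref{sec:ComparisonHeckeCat}, which by Lemma \ref{lem:LinkWithHeckeCat} sends monodromic costandards to costandards up to a fixed shift and half-Tate twist; the categorification of $\Hh_{\mon}$ by $\Hh^{\mon, \mix}$ established in \cite{LusztigYun}, \cite{Gouttard}, \cite{EteveThesis} then yields $[\nabla_{w, \chi}^{\mix}] = H_{w^{-1}, \chi}^{-1}$. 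For the $\IC$-sheaves I would instead use the forgetful functor $\mathfrak{q}^*: \Hh^{\equi} \to \Ocal^{\DL}$, which being essentially a smooth pullback preserves $\IC$-sheaves by \cite{BBD}, and combine this with the monodromic refinement of Kazhdan-Lusztig categorification in the equivariant setting to conclude $[\IC_{w, \chi}^{\mix}] = \underline{\tilde{H}}_{w, \chi}$.

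The principal technical obstacle is the careful bookkeeping of the three different character parametrizations in play: elements of $\Ch(\Tbf) = \Hom(\pi_1^t(\Tbf), \Qlb^{\times})$ in the monodromic Hecke category, pairs $(\chi, \chi')$ of such characters subject to $\chi' = w\Frob(\chi)$ in the equivariant Hecke category (a constraint forced by $\Ad_{\Frob}(\Tbf)$-equivariance), and characters of the finite group $\Tbf^{w\Frob}$ in $\Ocal^{\DL, \mix}$ itself. These identifications all arise from Lang--Steinberg type isomorphisms combined with the chosen trivialization of roots of unity, but verifying that the resulting formulae for standards, costandards, tilting and $\IC$-sheaves are compatible across the three settings demands careful tracking of shifts, Tate twists, and monodromy normalizations. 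Granting this bookkeeping, the theorem reduces to the already-established categorification statements for the two Hecke categories.
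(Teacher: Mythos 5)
Your overall strategy is the same as the paper's: extend the proof of Lemma \ref{lem:Categorification} from the unipotent block to the whole category by transporting categorification statements from the Hecke categories via $\mathfrak{p}_!$ and $\mathfrak{q}^*$. The identification of $K_0$ with the span of standard basis elements $H_{w,\chi}$ indexed by pairs with $\chi$ a character of $\Tbf^{w\Frob}$ (equivalently, a $w\Frob$-fixed character of $\pi_1^t(\Tbf)$) and the resulting injectivity are correct, as is the deduction of the $\IC$ formula from the mixed equivariant categorification of \cite{LusztigYun} via the pullback $\mathfrak{q}^*$.

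However, your route for the costandard formula has a gap that the paper deliberately avoids. You invoke ``the categorification of $\Hh_{\mon}$ by $\Hh^{\mon,\mix}$ established in \cite{LusztigYun}, \cite{Gouttard}, \cite{EteveThesis},'' but the paper states explicitly, right after Conjecture \ref{conj:CharacterMixedTilting}, that a source for the \emph{mixed} free monodromic theory in the \emph{non-unipotent} case does not exist in the literature: Gouttard treats the non-unipotent case but not the mixed setting, and Bezrukavnikov--Yun treat the mixed setting but only unipotently. This is precisely why the paper states the tilting formula $[T^{\mix}_{w,\chi}] = \underline{H}_{w,\chi}$ only as a conjecture, and why the theorem you are proving conspicuously omits the tilting class. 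Your argument for costandards, as written, implicitly relies on the same unestablished mixed monodromic input. The paper's proof instead pulls everything ($\Delta$, $\nabla$, and $\IC$) from the equivariant categories of \cite{LusztigYun} alone; equivariant Hecke categories have costandard objects and $\mathfrak{q}^*$ sends them to $\Ocal^{\DL}$ costandards by Lemma \ref{lem:LinkWithHeckeCat}. Alternatively, once $[\Delta_{w,\chi}^{\mix}] = H_{w,\chi}$ is known, the costandard formula follows from Verdier duality on mixed sheaves and the bar involution $\overline{H_{w,\chi}} = H_{w^{-1},\chi}^{-1}$. Either fix eliminates the dependence on the unavailable mixed monodromic categorification.
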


\begin{proof}
As in the unipotent case, these statements can be deduced from the corresponding statement from the equivariant categories of \cite{LusztigYun}, see in particular Section 3.14 of \emph{loc.cit.} 
\end{proof}

\begin{conjecture}\label{conj:CharacterMixedTilting}
Using the same notation as in Theorem \ref{thm:ThmCategorificationMonodromicHeckeAlg} we have 
$$[T_{w,\chi}^{\mix}] = \underline{H}_{w,\chi}.$$
\end{conjecture}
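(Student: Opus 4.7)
The plan is to identify $[T^{\mix}_{w,\chi}] \in \Hh_{\mon}$ by verifying that it satisfies the three defining properties of the Kazhdan--Lusztig element $\underline{H}_{w,\chi}$: (a) a triangular expansion in the basis $\{H_{y,\chi'}\}$ with leading term $H_{w,\chi}$; (b) positivity, meaning the coefficients of the lower terms lie in $v\Z[v]$; and (c) self-duality under the bar involution $\overline{(\,\cdot\,)}$. By the uniqueness characterization of $\underline{H}_{w,\chi}$, these three properties together force the desired equality. A parallel, more structural route would be to reduce to the mixed free monodromic Hecke category via $\mathfrak{p}_!$, exactly as in the proof of Lemma \ref{lem:Categorification} for the unipotent case, once one has a mixed categorification theorem available for $\Hh^{\mon,\mix}$ with non-unipotent parameters.

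The triangularity (a) is essentially free: by the classification in Theorem \ref{thm:StructureCatO}, $T^{\mix}_{w,\chi}$ is supported on the closure of $\BwB$ and admits a $\Delta$-filtration in which $\Delta^{\mix}_{w,\chi}$ occurs with multiplicity one, so the expansion afforded by Theorem \ref{thm:ThmCategorificationMonodromicHeckeAlg} has the required shape with unique leading coefficient $1$. For (b), I would show that $T^{\mix}_{w,\chi}$ is pure of weight $0$, reducing via Lemma \ref{lem:CharDeltaFilt} to endowing $j_y^*T^{\mix}_{w,\chi}$ and $j_y^!T^{\mix}_{w,\chi}$ with pure mixed structures of weight $0$; purity then forces the graded multiplicities of the $\Delta^{\mix}_{y,\chi'}$-pieces, for $y<w$, to occur only with strictly positive Tate twists, translating into $P_{y,\chi';w,\chi}(v) \in v\Z[v]$. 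Property (c) comes from Verdier self-duality of tilting sheaves, together with the fact that the bar involution on $\Hh_{\mon}$ is set up in \cite{LusztigYun} precisely so as to correspond to Verdier duality on the Hecke category realizations; this compatibility must then be transferred to $\Ocal^{\DL,\mix}$ through $\mathfrak{p}_!$.

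The main obstacle is (b), the purity of $T^{\mix}_{w,\chi}$ outside the unipotent block. In the unipotent case, purity is inherited from the free monodromic tilting sheaves of \cite{BezrukavnikovYun}, which are pure of weight zero by construction; transporting this statement to non-unipotent parameters requires the mixed weight filtration to be developed on the free monodromic formalism of \cite{Gouttard}, involving pro-completions at the torus monodromy and careful tracking of weights through the Koszul resolution used to define free monodromic objects. Once this is available, the pushforward along the smooth (and $t$-exact, up to shift) map $\mathfrak{p}$ preserves purity, and the conjecture follows exactly as in the unipotent case handled in Lemma \ref{lem:Categorification}, namely by identifying the image of the mixed free monodromic tilting sheaf under $\mathfrak{p}_!$ with $T^{\mix}_{w,\chi}$ and quoting the monodromic categorification.
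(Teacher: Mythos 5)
This statement is a \emph{Conjecture} in the paper, not a theorem, and the paper offers no proof of it. Immediately after stating Conjecture~\ref{conj:CharacterMixedTilting}, the paper explains why: the unipotent case follows from the mixed theory of free monodromic sheaves in \cite{BezrukavnikovYun}, but the analogous mixed theory has not been carried out in the literature for non-unipotent parameters (Gouttard's thesis \cite{Gouttard} handles the non-unipotent monodromic category but not in the mixed setting, which only gives the specialization $[T_{w,\chi}](1) = \underline{H}_{w,\chi}(1)$). The paper explicitly leaves this as an expected but unproven statement.

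Your proposal is not a proof; it is a reduction, and you say so yourself when you write that ``the main obstacle is (b)'' and that the conjecture follows ``once this is available.'' That missing ingredient --- a weight theory for free monodromic sheaves with non-unipotent monodromy, compatible with the pro-completion along the torus --- is precisely the gap the paper identifies as the reason the statement remains conjectural. So your diagnosis is accurate and agrees with the paper's own commentary, but you should be explicit that you have not closed the gap. Two smaller remarks on the sketch itself: first, your property (a) is indeed easy, but note that the multiplicity-one leading term uses the uniqueness part of Theorem~\ref{thm:StructureCatO}(iii) together with the fact that $\Delta$-filtration multiplicities are well-defined (via Lemma~\ref{lem:CharDeltaFilt}); and second, for property (c), ``Verdier self-duality of tilting sheaves'' requires some care in the monodromic setting because Verdier duality interacts nontrivially with the free monodromic pro-structure and with the Kummer twist $\chi \leftrightarrow \chi^{-1}$, so the bar involution on $\Hh_{\mon}$ does not literally come from Verdier duality on $\Ocal^{\DL}$ but from a duality built into the equivariant realization in \cite{LusztigYun}; transporting it through $\mathfrak{p}_!$ or $\mathfrak{q}^*$ is itself something that needs to be checked against the normalizations used in Theorem~\ref{thm:ThmCategorificationMonodromicHeckeAlg}.
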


The proof of this conjecture in the unipotent case relied on the study of free monodromic mixed unipotent sheaves as done in \cite{BezrukavnikovYun}. We did not find a source in the literature that does this in the non-unipotent mixed case. In the thesis of Gouttard \cite{Gouttard}, the author studies the non-unipotent case but not in the mixed setting, it can be deduced from the categorification theorems of \emph{loc. cit.} that 
$$[T_{w,\chi}](1) = \underline{H}_{w,\chi}(1).$$
We expect that the generalization of \cite{BezrukavnikovYun} to the non-unipotent setting should be a matter of adapting the arguments in the correct framework. 

\subsection{Relation with category $\mathcal{O}$}\label{sec:ComparisonCatO}

The goal of this section is to relate the decomposition numbers appearing in Lemma \ref{lem:Decomp} to decomposition numbers in the category $\mathcal{O}$. Recall that category $\Ocal$ is the category of sheaves on the stack 
$$\Ubf \backslash \Gbf/\Bbf.$$
For a coefficient ring $\Lambda$, we will denote by 
$$\Ocal_{\Lambda} = \DD(\Ubf \backslash \Gbf/\Bbf, \Lambda),$$
as in the case of $\Ocal^{\DL}$, we will drop the index $\Lambda$ when the coefficients are clear. We also equip this category with its Bruhat stratification $\Ubf \backslash \Gbf/\Bbf = \sqcup_w \Ubf \backslash \BwB/\Bbf$ and we denote by $j_w : \Ubf \backslash \BwB/\Bbf \subset \Ubf \backslash \Gbf/\Bbf$ the inclusion of the stratum corresponding to $w$. 
We recall from \cite{AcharRicheKoszulDuality1}, \cite{RicheSoergelWilliamson} that we have the following objects
\begin{enumerate}
\item the standard sheaves $\Delta_w^{\Ocal, \Lambda} = j_{w,!}\Lambda[\ell(w)]$, where $\Lambda$ denotes the constant sheaf, 
\item the costandard sheaves $\nabla_w^{\Ocal, \Lambda} = j_{w,*}\Lambda[\ell(w)]$, 
\item the intersection complexes $\IC_w^{\Ocal, \Lambda} = j_{w,!*}\Lambda[\ell(w)]$, 
\item the indecomposable tilting sheaves $T_w^{\Ocal, \Lambda}$, which exist by \cite[Appendix B]{AcharRicheKoszulDuality1}. 
\end{enumerate}
As explained in \cite[Section 2.7]{AcharRicheKoszulDuality1}, if $T \in \Ocal^{\heartsuit}$ is a tilting sheaf, then $T[\frac{1}{\ell}]$ is also a tilting sheaf. Consequently, in \emph{loc. cit.} they introduce multiplicities 
$$T_w^{\Ocal, \Zlb}[\frac{1}{\ell}] = T_w^{\Ocal, \Qlb} \oplus \bigoplus_{v < w} (T_v^{\Ocal, \Qlb})^{\oplus n^{\Ocal}_{v,w}}.$$

\begin{lem}\label{lem:EqualityDecompNumbers}
For all $v,w \in W$, there is an equality of multiplicities
$$n_{v,w}^{\Ocal} = n_{v,w,1}$$
where the integer in the RHS is the multiplicity defined in Lemma \ref{lem:Decomp}. 
\end{lem}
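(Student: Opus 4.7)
The plan is to express both integral tilting sheaves $T_w^{\Ocal, \Zlb}$ and $T_{w,1}^{\Zlb}$ as images of a single free monodromic unipotent tilting sheaf $\widetilde{T}_w$ on $\Ubf\backslash\Gbf/\Ubf$ under analogous pushforwards, and then to transport the decomposition of $\widetilde{T}_w[\tfrac{1}{\ell}]$ along each of them.

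Consider the two $\Tbf$-torsor projections
$$\Ubf\backslash\Gbf/\Bbf \xleftarrow{q_{\Ocal}} \Ubf\backslash\Gbf/\Ubf \xrightarrow{q_{\DL}} \frac{\Ubf\backslash\Gbf/\Ubf}{\Ad_{\Frob}\Tbf},$$
where $q_{\Ocal}$ quotients by right translation and $q_{\DL}$ quotients by $\Ad_{\Frob}\Tbf$. By Lemma \ref{lem:LinkWithHeckeCat}, the functor $q_{\DL,!}$, up to shift and Tate twist, sends $\widetilde{T}_w$ to the indecomposable tilting $T_{w,1}$. The analogous statement for $q_{\Ocal,!}$, namely that free monodromic unipotent tilting sheaves push forward to indecomposable tilting sheaves in classical category $\Ocal$, is established in \cite{BezrukavnikovYun} and its integral refinement appears in \cite{AcharRicheKoszulDuality1}. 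Using the unicity of indecomposable tilting sheaves prescribed by their support and leading $\Delta$-term (Theorem \ref{thm:StructureCatO} and its classical analogue) together with Remark \ref{rq:IndecomposibilityOfTilting} on the preservation of indecomposability under pushforward, I would conclude that $q_{\Ocal,!}(\widetilde{T}_w^{\Zlb}) \simeq T_w^{\Ocal, \Zlb}$ and $q_{\DL,!}(\widetilde{T}_w^{\Zlb}) \simeq T_{w,1}^{\Zlb}$ at the integral level, up to shift and twist.

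Since both pushforwards commute with the base change $- \otimes_{\Zlb} \Qlb$, the $\Qlb$-decomposition
$$\widetilde{T}_w^{\Zlb}[\tfrac{1}{\ell}] = \widetilde{T}_w^{\Qlb} \oplus \bigoplus_{v<w} (\widetilde{T}_v^{\Qlb})^{\oplus m_{v,w}}$$
of the integral free monodromic tilting sheaf transports into matching decompositions on both sides. Projecting the resulting decomposition of $T_{w,1}^{\Zlb}[\tfrac{1}{\ell}]$ to the $\chi_{\ell}=1$ summand via Lemma \ref{lem:Decomp} yields multiplicity $m_{v,w}$ for $T_{v,1}^{\Qlb}$, and pushing the decomposition of $\widetilde{T}_w^{\Zlb}[\tfrac{1}{\ell}]$ along $q_{\Ocal}$ yields multiplicity $m_{v,w}$ for $T_v^{\Ocal, \Qlb}$. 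Hence $n_{v,w,1} = m_{v,w} = n_{v,w}^{\Ocal}$.

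The main obstacle is establishing a theory of integral free monodromic unipotent tilting sheaves compatible with both pushforwards, since the existing free monodromic literature works primarily with field coefficients. An alternative workaround is to replace the comparison via $\widetilde{T}_w^{\Zlb}$ by a direct argument at the level of Hecke algebra classes: by an integral strengthening of Lemma \ref{lem:Categorification} together with the corresponding classical categorification theorem, both $[T_w^{\Ocal, \Zlb}]$ and $[T_{w,1}^{\Zlb}]$ should equal the same $\ell$-canonical basis element ${}^{\ell}\!\underline{H}_w$ of $\Hh$, and the decomposition multiplicities are then given by the expansion of this common element in the $\underline{H}_v$-basis.
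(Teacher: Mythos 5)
Your strategy — push a single indecomposable free monodromic tilting sheaf on $\Ubf\backslash\Gbf/\Ubf$ along both $\Tbf$-quotient maps and match up the decompositions after inverting $\ell$ — is exactly what the paper does (your $q_\Ocal$, $q_\DL$ are the paper's $\mathfrak{p}'$, $\mathfrak{p}$). But there is a genuine gap in how you pass from $\Zlb$ to $\Qlb$ on the free monodromic side, and your own text already shows the symptom. You assert that ``both pushforwards commute with the base change $-\otimes_{\Zlb}\Qlb$'' and that $\widetilde{T}_w^{\Zlb}[\tfrac{1}{\ell}]$ decomposes into \emph{unipotent} $\Qlb$-free monodromic tiltings $\widetilde{T}_v^{\Qlb}$. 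These two claims are incompatible: pushing each $\widetilde{T}_v^{\Qlb}$ along $q_\DL$ lands only in the unipotent Lusztig series, whereas $T_{w,1}^{\Zlb}[\tfrac{1}{\ell}] = q_{\DL,!}(\widetilde{T}_w^{\Zlb})[\tfrac{1}{\ell}]$ has non-unipotent summands $T_{v,\chi_\ell}^{\Qlb}$ by Lemma \ref{lem:Decomp}. The resolution, which the paper uses, is that naive base change $-\otimes_{\Zlb}\Qlb$ does \emph{not} land in $\Hh^{\mon,\unip}_{\Qlb}$; one must use the inversion-of-$\ell$ functor $\inv_\ell$ of \cite[Section 2.2]{EtevePaper1}, which is $-\otimes_{\Zlb}\Qlb$ followed by a completion. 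Crucially, $\inv_\ell$ interacts asymmetrically with the two pushforwards: $\mathfrak{p}'_!\,\inv_\ell T = (\mathfrak{p}'_! T)[\tfrac{1}{\ell}]$ on the nose (because right translation by $\Tbf$ kills the extra monodromy), but $\mathfrak{p}_!\,\inv_\ell T = ((\mathfrak{p}_! T)[\tfrac{1}{\ell}])^{s=1}$, i.e., only the unipotent Lusztig series component survives. Once you replace $\widetilde{T}_w^{\Zlb}[\tfrac{1}{\ell}]$ by $\inv_\ell\widetilde{T}_w^{\Zlb}$ and use these two commutation formulas, your ``project to $\chi_\ell=1$'' step becomes a consequence rather than an ad hoc fix, and the argument closes as you describe. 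Finally, the obstacle you flag at the end — the absence of an integral free monodromic tilting theory — is not actually an obstacle: such a theory is developed in \cite{EteveThesis} and \cite{EtevePaper1}, together with the indecomposability-preservation under $\mathfrak{p}_!$ (Remark \ref{rq:IndecomposibilityOfTilting}) and the classical analogue under $\mathfrak{p}'_!$ from \cite[Lemma 5.8]{BezrukavnikovRicheSoergelTheory}. Your alternative sketch via an integral $K_0$ categorification and the $\ell$-canonical basis is plausible but not what the paper does, and would itself require an integral categorification statement that the paper does not prove.
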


We consider the categories of unipotent free monodromic sheaves on $\Ubf \backslash \Gbf/\Ubf$ as constructed in \cite{BezrukavnikovYun} and \cite{Gouttard} for $\Lambda = \Qlb$ and $\Lambda = \Zlb$, we denote these categories by $\Hh^{\mon,\unip}_{\Zlb}$ and $\Hh^{\mon, \unip}_{\Qlb}$. Moreover, there is an inversion of $\ell$-functor 
$$\inv_{\ell} : \Hh^{\mon,\unip}_{\Zlb} \to \Hh^{\mon, \unip}_{\Qlb}$$
which is constructed in \cite[Section 2.2]{EtevePaper1} essentially as composition of $\Qlb \otimes_{\Zlb}-$ and a certain completion. 
We denote by 
$$\mathfrak{p}' : \Ubf \backslash \Gbf/\Ubf \to \Ubf \backslash \Gbf/\Bbf$$
the quotient map.

\begin{proof}
Consider the following diagram of categories 
\[\begin{tikzcd}
	& {\Hh^{\mon, \unip, \Zlb}} \\
	{\Ocal^{\DL, \Zlb}} & {\Hh^{\mon, \unip, \Qlb}} & {\Ocal^{\Zlb}} \\
	{\Ocal^{\DL, \Qlb}} && {\Ocal^{\Qlb}.}
	\arrow["{\mathfrak{p}_![\dim \Tbf]}"{description}, from=1-2, to=2-1]
	\arrow["{\inv_{\ell}}"{description}, from=1-2, to=2-2]
	\arrow["{\mathfrak{p}'_![\dim \Tbf]}"{description}, from=1-2, to=2-3]
	\arrow["{\Qlb \otimes_{\Zlb}-}"', from=2-1, to=3-1]
	\arrow["{\mathfrak{p}_![\dim \Tbf]}"{description}, from=2-2, to=3-1]
	\arrow["{\mathfrak{p}'_![\dim \Tbf]}"{description}, from=2-2, to=3-3]
	\arrow["{\Qlb \otimes_{\Zlb}-}", from=2-3, to=3-3]
\end{tikzcd}\]
Let $T \in \Hh^{\mon, \unip, \Zlb}$ be a free monodromic tilting object, then the object $\inv_{\ell}T \in \Hh^{\mon, \unip, \Qlb}$ also free monodromic tilting. Furthermore the following relations hold by definition of the functor $\inv_{\ell}$. We have 
$$\mathfrak{p}_!\inv_{\ell}T[\dim \Tbf] = ((\mathfrak{p}_!T)[\frac{1}{\ell}])^{s = 1}[\dim \Tbf]$$ 
where $^{\unip}$ denotes the direct summand of $(p_!T)[\frac{1}{\ell}])$ corresponding to $s = 1$ in the decomposition of lemma \ref{lem:Decomp}. We also have
$$\mathfrak{p}'_!\inv_{\ell}T[\dim \Tbf] = (\mathfrak{p}'_!T)[\frac{1}{\ell}][\dim \Tbf].$$
Let $\hat{T}_w^{\Zlb}$ be the indcomposable free monodromic tilting sheaf in $\Hh^{\mon, \unip, \Zlb}$ corresponding to $w$. By Remark \ref{rq:IndecomposibilityOfTilting}, we have 
$$\mathfrak{p}_!\hat{T}_w^{\Zlb}[\dim \Tbf] = T_{w,1}^{\Zlb}$$ 
and by \cite[Lemma 5.8]{BezrukavnikovRicheSoergelTheory}, we have 
$$\mathfrak{p}'_!\hat{T}_w^{\Zlb}[\dim \Tbf] = T_w^{\Ocal, \Zlb}.$$
Now since the category of free monodromic tilting sheaves in $ \Hh^{\mon, \unip, \Qlb}$ is Krull-Schmidt there is a decomposition
$$\inv_{\ell}\hat{T}_w^{\Zlb} = \hat{T}^{\Qlb}_{w} \oplus \bigoplus_{v<w} (\hat{T}^{\Qlb}_{v})^{\oplus \hat{n}_{v,w}}.$$
It now follows that $n_{v,w,1} = \hat{n}_{v,w} = n^{\Ocal}_{v,w}$. 
\end{proof}

\begin{rque}
The decomposition numbers that appear in Lemma \ref{lem:EqualityDecompNumbers} control the difference between the usual Kazhdan--Lusztig basis in $\Hh$ and the $\ell$-Kazhdan--Lusztig basis (which depends on $\Gbf$ and not just on $\Wbf$), firstly defined in the case of crystallographic Coxeter groups in \cite{ParitySheaves}. We refer to \cite{pcanonicalBasis} for an account on this basis. 
\end{rque}

\begin{lem}\label{lem:IncomposabilityLargeL}
For a fixed $\Gbf$, there exists $\ell_0$ large enough such that for all $\ell \geq \ell_0$ the Kazhdan-Lusztig and $\ell$-Kazhdan--Lusztig basis coincide. For such $\ell$, we have $n_{v,w,1} = 0$ for all $v < w$.
\end{lem}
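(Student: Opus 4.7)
The two assertions of the lemma are equivalent in view of the remark following Lemma \ref{lem:EqualityDecompNumbers}: by that remark the integers $n_{v,w,1} = n_{v,w}^{\Ocal}$ are precisely the coefficients expressing the $\ell$-Kazhdan--Lusztig basis of $\Hh$ in the ordinary Kazhdan--Lusztig basis. So the existence of $\ell_0$ such that the two bases agree for $\ell \geq \ell_0$ is the same as the vanishing statement $n_{v,w,1} = 0$ for $v < w$. It is enough to prove one of them.

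The plan is to invoke the integral theory of parity sheaves on the flag variety $\Gbf/\Bbf$. Since $\Wbf$ is finite there are only finitely many Schubert varieties $\overline{\BwB/\Bbf}$. The integral Bott--Samelson complexes, and the finite list of $\Hom$ and cohomology groups between them that control the category of tilting sheaves on $\Gbf/\Bbf$, are finitely generated $\Z$-modules whose torsion primes depend only on the root datum of $\Gbf$. Following the definition surveyed in \cite{ParitySheaves}, \cite{pcanonicalBasis}, the $\ell$-Kazhdan--Lusztig basis element $^{\ell}\underline{H}_w$ is the Grothendieck class of the indecomposable integral parity sheaf attached to $w$, which in this setting coincides with the class of the tilting sheaf $T_w^{\Ocal, \Zlb}$. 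Let $\ell_0$ be strictly larger than all torsion primes appearing in this finite list of $\Z$-modules; this bound depends only on $\Gbf$ and not on $\Frob$, since the categorical side lives on $\Ubf \backslash \Gbf /\Bbf$. For $\ell \geq \ell_0$ the $\Zlb$-endomorphism algebra of $T_w^{\Ocal, \Zlb}$ is a free $\Zlb$-module, and base change along $\Zlb \to \Qlb$ preserves its being local. Hence $T_w^{\Ocal, \Zlb}[\tfrac{1}{\ell}]$ remains indecomposable and equals $T_w^{\Ocal, \Qlb}$, giving $n_{v,w}^{\Ocal} = 0$ for $v < w$.

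The main obstacle is not any calculation but the identification of the correct input: the fact that for $\ell$ larger than a root-datum-dependent bound the $\ell$-canonical basis collapses to the ordinary Kazhdan--Lusztig basis. This is standard in the parity-sheaf literature and follows from the existence of integral forms with bounded torsion; once it is invoked, translating through Lemma \ref{lem:EqualityDecompNumbers} and the comparison results of Section \ref{sec:ComparisonCatO} is immediate.
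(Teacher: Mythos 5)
Your approach is in the right spirit — the paper's proof also goes through the theory of parity sheaves on the flag variety — but two steps in your argument are actually false as stated, and together they leave a genuine gap.

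First, the claim that ``the $\ell$-Kazhdan--Lusztig basis element $^{\ell}\underline{H}_w$ is the Grothendieck class of the indecomposable integral parity sheaf attached to $w$, which in this setting coincides with the class of the tilting sheaf $T_w^{\Ocal, \Zlb}$'' conflates two different objects. The indecomposable parity sheaf and the indecomposable tilting sheaf are \emph{not} the same, and their Grothendieck classes differ in general: over $\Qlb$ the parity sheaf is $\IC_w^{\Ocal,\Qlb}$ while the tilting sheaf $T_w^{\Ocal,\Qlb}$ has a $\Delta$-filtration with multiple layers. What \emph{is} true, and what the paper invokes, is \cite[Theorem 2.6]{AcharRicheKoszulDuality1}: for $\ell$ good for $\Gbf$ the \emph{decomposition numbers} of tilting sheaves coincide with those of parity sheaves. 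This is a nontrivial theorem (it goes through Koszul duality), and your proof substitutes an incorrect equality of classes for it.

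Second, the final step is a false implication. You write that the $\Zlb$-endomorphism algebra of $T_w^{\Ocal,\Zlb}$ is a free $\Zlb$-module and that ``base change along $\Zlb \to \Qlb$ preserves its being local.'' Freeness of $\End(T_w^{\Ocal,\Zlb})$ over $\Zlb$ is automatic for any tilting sheaf (it follows from the $\Delta$/$\nabla$-filtrations and Lemma \ref{lem:formalityHomTilting}) and has nothing to do with the size of $\ell$; more importantly, a local, finite free $\Zlb$-algebra need not stay local after inverting $\ell$. For example $R = \{(a,b)\in \Zlb\times\Zlb : a\equiv b \pmod{\ell}\}$ is local and free of rank $2$ over $\Zlb$, yet $R\otimes_{\Zlb}\Qlb \cong \Qlb\times\Qlb$ is not local. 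So nothing in your argument actually rules out $T_w^{\Ocal,\Zlb}[\tfrac{1}{\ell}]$ decomposing. The missing input is precisely \cite[Proposition 2.41]{ParitySheaves}: for all but finitely many $\ell$, the mod-$\ell$ indecomposable parity sheaf coincides with the IC sheaf, which forces the parity decomposition numbers (and, via the Achar--Riche comparison, the tilting decomposition numbers $n_{v,w}^{\Ocal}$) to vanish. Your appeal to bounded torsion of Bott--Samelson $\Hom$-groups is pointing in this direction, but it needs to be routed through that statement rather than through the incorrect locality-under-base-change claim.
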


\begin{proof}
By \cite[Theorem 2.6]{AcharRicheKoszulDuality1}, if $\ell$ is good for $\Gbf$, these decomposition numbers are equal to decomposition numbers of parity sheaves. By \cite[Proposition 2.41]{ParitySheaves}, mod $\ell$ indecomposable parity sheaves and $\IC$ sheaves coincide for all but finitely many primes $\ell$.
\end{proof}

\begin{rque}
Predicting the behaviour of this basis and in particular when it differs from the usual Kazhdan--Lusztig basis is a very hard problem. In \cite{CalculationPCanonicalBasis}, the authors construct an algorithm to compute this basis. 
\end{rque}

\section{Tilting representations}\label{sec:TiltingRep}

\subsection{Horocycle correspondence}

We introduce the horocycle correspondence which we attribute to Lusztig. This same correspondence (not presented in a stacky and twisted way) was used to first define character sheaves \cite{CharacterSheaves1}. 

$$\frac{\Gbf}{\Ad_{\Frob}\Gbf} \xleftarrow{\alpha} \frac{\Gbf}{\Ad_{\Frob}\Bbf} \xrightarrow{\beta} \frac{\Ubf \backslash \Gbf/\Ubf}{\Ad_{\Frob}\Tbf}.$$

We introduce the following functors (which stand for horocycle and character) 
$$\ch = \alpha_!\beta^*[\dim \Ubf](\frac{\dim \Ubf}{2}) : \Ocal^{\DL} \to \DD(\Rep_{\Lambda} \Gbf^{\Frob})$$
and 
$$\hc = \alpha_!\beta^*[-\dim \Ubf](-\frac{\dim \Ubf}{2}) : \DD(\Rep_{\Lambda} \Gbf^{\Frob}) \to \Ocal^{\DL}.$$
Note that since $\alpha$ is proper and smooth, and $\beta$ is smooth the functor $\ch$ is right adjoint to $\hc$.

\begin{thm}[\protect{\cite[Corollary 3.3.5, Lemma 3.2.7]{EtevePaper1}}]\label{thm:ConservativityAndDeligneLusztig} 
\begin{enumerate}
\item The functor $\hc$ is conservative.
\item There is an isomorphism of functors $ \ch j_{w,!} = R_w[\ell(w)]$ where the RHS is the Deligne--Lusztig induction functor. 
\end{enumerate}
\end{thm}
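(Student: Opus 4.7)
The plan is to reduce both claims to a base-change computation on each Bruhat stratum, where the horocycle correspondence recovers the classical Deligne--Lusztig varieties.

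For part (2), I would restrict the correspondence above the $w$-stratum. Forming the Cartesian square obtained by pulling back $\beta$ along $j_w$, one obtains a stack that, after absorbing the unipotent radicals on both sides and applying Lang's theorem to trivialize the residual $\Bbf$-bundle, is identified with the quotient $Y(\dot w)/(\Gbf^{\Frob}\times \Tbf^{w\Frob})$, where $Y(\dot w)$ is the Deligne--Lusztig variety carrying its natural $\Gbf^{\Frob}\times \Tbf^{w\Frob}$-action. Proper base change (valid since $\alpha$ is proper) together with the projection formula then expresses $\alpha_!\beta^* j_{w,!}$ as the $\Tbf^{w\Frob}$-equivariant functor $\RGamma_c(Y(\dot w), -)$, which is exactly the Deligne--Lusztig induction $R_w$. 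Combining the perversity shift $[\dim \Ubf]$ built into $\ch$ with the relative dimension $\dim \BwB - \dim \Bbf = \ell(w)$ of the stratum produces the prescribed shift $[\ell(w)]$.

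For part (1), I would proceed via the adjunction $\hc \dashv \ch$. Since $\Ocal^{\DL}$ is compactly generated by the standard objects $\Delta_{w,\chi}$ (Proposition \ref{prop:compactGenCatO}), conservativity of $\hc$ on $\DD(\Rep_{\Lambda}\Gbf^{\Frob})$ is equivalent to the claim that the essential image of $\ch$ generates the target under colimits. By part (2), this image contains every Deligne--Lusztig complex $\ch\Delta_{w,\chi} = R_w(E_\chi)[\ell(w)]$, and these are known to generate $\DD(\Rep_{\Lambda}\Gbf^{\Frob})$: every simple $\Gbf^{\Frob}$-representation occurs in some $R_w^\theta$, and this generation lifts to the derived category because $\DD(\Rep_{\Lambda}\Gbf^{\Frob})$ is the ind-completion of its subcategory of perfect complexes and the $R_w$ exhaust the irreducible constituents already at the level of the Grothendieck group after tensoring with a field.

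The main obstacle is the geometric identification in (2): carefully tracking the $\Tbf^{w\Frob}$-equivariance through the Lang trivialization and matching the conventions on cohomological shifts and Tate twists between the definition of $\ch$ and the classical normalization of $R_w$. The conservativity in (1) is largely formal once (2) is established, but the generation statement is delicate when $\Lambda\in\{\Zlb,\Flb\}$, where one cannot argue purely at the level of characters and must appeal to the derived enhancement and to the fact that every block of $\Rep_{\Lambda}\Gbf^{\Frob}$ meets the image of some $R_w$. Both points are carried out in detail in the cited reference \cite{EtevePaper1}.
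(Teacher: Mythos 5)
The paper does not prove Theorem \ref{thm:ConservativityAndDeligneLusztig} itself; both parts are imported verbatim from the cited reference \cite{EtevePaper1}. So there is no internal proof to compare with, and the review is of the internal consistency of your proposed argument.

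Your sketch of part (2) is sound: restricting the horocycle correspondence over the $w$-stratum, using that $j_w$ is a locally closed immersion into the horocycle stack, and invoking proper/smooth base change does identify $\alpha_!\beta^* j_{w,!}$ with the Deligne--Lusztig induction from $\Tbf^{w\Frob}$, once one tracks the shift $\dim\Ubf + \ell(w) - \dim \Bbf$. The real content is the bookkeeping with Lang's theorem, which you correctly flag.

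Part (1) has a genuine gap. You assert that, because $\hc \dashv \ch$, conservativity of $\hc$ is \emph{equivalent} to the essential image of $\ch$ generating $\DD(\Rep_{\Lambda}\Gbf^{\Frob})$. But $\hc$ is the \emph{left} adjoint: from $\hc(A)=0$ the adjunction gives $\Hom(A,\ch(B)[n])=0$ for all $B,n$, which is a \emph{cogeneration} condition on the image of $\ch$, not a generation one. To convert cogeneration to generation you need an extra input, namely the Verdier self-duality of $\DD(\Rep_{\Lambda}\Gbf^{\Frob})$ (equivalently, that $\Lambda[\Gbf^{\Frob}]$ is a symmetric algebra) together with the observation that the image of $\ch$ is stable under this duality up to shifts and twists, because $\alpha$ is proper smooth and $\beta$ is smooth. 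Without this, the claimed equivalence is false for an arbitrary left adjoint. More importantly, even granting the duality, the statement that Deligne--Lusztig complexes generate $\DD(\Rep_{\Lambda}\Gbf^{\Frob})$ over $\Zlb$ and $\Flb$ is itself a non-trivial theorem, not a fact one can read off from ``every simple occurs as a constituent of some $R_w^\theta$'': appearing as a composition factor in the Grothendieck group does not give thick triangulated generation in a non-semisimple setting. Modulo duality, that generation statement is logically \emph{equivalent} to the conservativity you are trying to prove, so the reduction is essentially circular. An argument that actually closes the loop would have to, e.g., exhibit the identity functor as a retract (or more generally a stratified piece) of the composite $\ch\circ\hc$, built from the Bruhat decomposition of $\frac{\Gbf}{\Ad_{\Frob}\Bbf}\times_{\text{horo}}\frac{\Gbf}{\Ad_{\Frob}\Bbf}$, rather than appeal to a generation fact of the same strength.
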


Let us also record the compatibility with (geometric) Lusztig series.

\begin{thm}
\begin{enumerate}
\item The category $\DD(\Rep_{\Lambda} \Gbf^{\Frob})$ splits as 
$$\DD(\Rep_{\Lambda} \Gbf^{\Frob}) = \bigoplus_{s \in (G_{\Lambda}^*/\mathrm{ss})^{\Frob^*}} \DD(\Rep^s_{\Lambda} \Gbf^{\Frob})$$
where $\DD(\Rep^s_{\Lambda} \Gbf^{\Frob})$ is the Lusztig series corresponding to the conjugacy class $s$. 
\item The functors $\ch$ and $\hc$ preserve the splittings into Lusztig series and the splitting of $\Ocal^{\DL}$ of Corollary \ref{corol:decompCatO}.  
\end{enumerate}
\end{thm}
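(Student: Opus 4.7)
The plan is as follows. For part (i), I would invoke the classical decomposition of $\DD(\Rep_\Lambda \Gbf^\Frob)$ into Lusztig series: over $\Qlb$ this is due to Deligne--Lusztig \cite{DeligneLusztig}, while over $\Zlb$ and $\Flb$ it is due to Broué--Michel, where Lusztig series are unions of $\ell$-blocks indexed by $\Frob^*$-stable semisimple $\ell'$-conjugacy classes in the dual group $\Gbf^*$. Combined with Lemma \ref{lem:pointOverLambda}, this identifies the indexing set with $(\Gbf^*_\Lambda/\sss)^{\Frob^*}$, yielding the claimed splitting.

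For part (ii), I would first verify that $\ch$ is compatible with the two decompositions. By Corollary \ref{corol:decompCatO}, the subcategory $\Ocal^{\DL, s}$ is generated under colimits by the standard sheaves $\Delta_{w, \chi}$ whose geometric conjugacy class is $s$. By Theorem \ref{thm:ConservativityAndDeligneLusztig}(ii), up to a cohomological shift and Tate twist, $\ch(\Delta_{w, \chi})$ is the Deligne--Lusztig induction $R_w(E_\chi)$. Since $E_\chi$ lies in the block of $\chi$, and since $R_w(\chi)$ lies in the Lusztig series attached to the geometric conjugacy class of $(\Tbf^{w\Frob}, \chi)$ by the very definition of these series, we have $\ch(\Delta_{w, \chi}) \in \DD(\Rep^s_\Lambda \Gbf^\Frob)$. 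Since the Lusztig subcategory is closed under colimits, this gives $\ch(\Ocal^{\DL, s}) \subseteq \DD(\Rep^s_\Lambda \Gbf^\Frob)$.

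The compatibility of $\hc$ then follows formally from the adjunction $\hc \dashv \ch$. For $A \in \DD(\Rep^s_\Lambda \Gbf^\Frob)$ and $B \in \Ocal^{\DL, t}$ with $t \neq s$, one computes
\[
\Hom_{\Ocal^{\DL}}(\hc(A), B) = \Hom(A, \ch(B)) = 0,
\]
since $\ch(B) \in \DD(\Rep^t_\Lambda \Gbf^\Frob)$ and distinct Lusztig series are pairwise orthogonal. Thus $\hc(A)$ is orthogonal to every $\Ocal^{\DL, t}$ with $t \neq s$, hence lies in $\Ocal^{\DL, s}$.

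The main obstacle I anticipate lies in part (i): for modular coefficients one must carefully invoke the block-theoretic refinement of Broué--Michel to identify the indexing set with $\Frob^*$-stable $\ell'$-semisimple conjugacy classes and to match the geometric parametrization with Lemma \ref{lem:pointOverLambda}. Once (i) is in hand, the sheaf-theoretic content of (ii) reduces to the classical fact that $R_w^\chi$ lies in the Lusztig series indexed by the geometric class of $(\Tbf^{w\Frob}, \chi)$, so the argument is essentially formal.
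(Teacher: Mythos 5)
Your proof is correct and follows essentially the same route as the paper. For $(i)$ the paper cites Deligne--Lusztig (for $\Qlb$) and Broué--Michel (for $\Zlb$, $\Flb$) exactly as you do; for $(ii)$ the paper also reduces to the compatibility $\ch j_{w,!} = R_w[\ell(w)]$ from Theorem \ref{thm:ConservativityAndDeligneLusztig} together with the very definition of Lusztig series, which is what you spell out on generators. Where the paper is terse about why $\hc$ also preserves the splitting, your adjunction-and-orthogonality argument is a valid and clean way to fill that in (one could equally invoke a dual compatibility of $\hc$ with Deligne--Lusztig restriction from the cited reference). One small point worth flagging: when you pass from the generators $\Delta_{w,\chi}$ to all of $\Ocal^{\DL,s}$, you need $\ch$ to commute with colimits; this holds because $\alpha$ is proper smooth and $\beta$ is smooth, so $\ch$ is a two-sided adjoint, but it is worth saying since the paper only records that $\ch$ is a \emph{right} adjoint to $\hc$.
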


\begin{proof}
The first point is done in the original paper of Deligne and Lusztig \cite{DeligneLusztig} if $\Lambda = \Qlb$. The integral and the modular cases are done in \cite[Theorem 2.2]{BroueMichel}. In view of Theorem \ref{thm:ConservativityAndDeligneLusztig}, the second point follows from the definition of the Lusztig series and the compatibility between the Deligne--Lusztig functors and the functors $\ch$ and $\hc$ of theorem \ref{thm:ConservativityAndDeligneLusztig}. 
\end{proof}
\subsection{Tilting representations}

\begin{defi}
A tilting representation of $\Gbf^{\Frob}$ is an object in $\DD(\pt/\Gbf^{\Frob}, \Lambda)$ of the form $\ch(T)$ where $T \in \Ocal_{\tilt}^{\DL}$. 
\end{defi}

\begin{lem}\label{lem:Affineness}
All the complexes $\ch(\bigoplus_{\chi}\Delta_{w,\chi})$ are concentrated in nonnegative degree. 
\end{lem}

\begin{proof}
This is a consequence of Artin vanishing as all Deligne--Lusztig varieties are affine.
If $q$ is large enough then this is \cite[Theorem 9.7]{DeligneLusztig}, the general case is done in \cite[Corollary 3.12]{AffinenessDL}.
\end{proof}

\begin{rque}
Another proof of Lemma \ref{lem:Affineness} is done in \cite{ZhuLLC} and is independent of \cite{AffinenessDL}.
\end{rque} 

\begin{thm}\label{thm:TiltingRepsAreProjective}\footnote{This result was obtained independently by Zhu \cite{ZhuLLC}}
Let $T \in \Ocal_{\tilt}^{\DL}$.
\begin{enumerate}
\item The complex $\ch(T)$ is a compact object of $\DD(\Rep_{\Lambda} \Gbf^{\Frob})$.
\item The complex $\ch(T)$ is concentrated in degree $0$ and is a projective object of $\Rep_{\Lambda}\Gbf^{\Frob}$. 
\end{enumerate}
\end{thm}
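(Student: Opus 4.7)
The plan is to deduce both statements from Theorem \ref{thm:ConservativityAndDeligneLusztig}(ii), which identifies $\ch$ on standard and costandard sheaves with (shifts of) the compactly supported and ordinary cohomology of the Deligne--Lusztig varieties $Y(\dot{w})$, combined with the two-sided filtration structure on tilting objects: a tilting $T$ is by definition simultaneously filtered by $\Delta$-objects and by $\nabla$-objects.

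For $(i)$, I would use that $T \in \Ocal_{\tilt}^{\DL}$ admits a finite $\Delta$-filtration by definition, so applying the exact functor $\ch$ yields a finite filtration of $\ch(T)$ whose graded pieces are of the form $\ch(\Delta_{w,\chi}) = e_\chi\,\RGamma_c(Y(\dot{w}), \Lambda)[\ell(w)]$ (up to Tate twist). Since $Y(\dot{w})$ is a quasi-projective $\Fqb$-variety equipped with an action of the finite group $\Gbf^{\Frob}$, the complex $\RGamma_c(Y(\dot{w}), \Lambda)$ is a perfect complex of $\Lambda[\Gbf^{\Frob}]$-modules by the classical finiteness theorem for $\ell$-adic cohomology with finite group actions (valid even when $\ell$ divides $|\Gbf^{\Frob}|$; see e.g.\ Bonnafé--Rouquier or Rickard). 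Projection by the idempotent $e_\chi$ and cohomological shifts preserve compactness, and compact objects are closed under finite extensions in a stable $\infty$-category, so $\ch(T)$ is compact.

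For $(ii)$, assume that each $Y(\dot{w})$ is affine. Being smooth of pure dimension $\ell(w)$, Artin vanishing then forces $\RGamma(Y(\dot{w}), \Lambda)$ to be concentrated in cohomological degrees $[0, \ell(w)]$ while its dual (Poincaré duality for $\Lambda = \Qlb, \Flb$, and a universal coefficient argument for $\Lambda = \Zlb$) forces $\RGamma_c(Y(\dot{w}), \Lambda)$ to lie in degrees $[\ell(w), 2\ell(w)]$. After applying the shift $[\ell(w)]$, this yields
\[
\ch(\Delta_{w,\chi}) \in \DD^{\geq 0}(\Rep_{\Lambda}\Gbf^{\Frob}) \quad \text{and} \quad \ch(\nabla_{w,\chi}) \in \DD^{\leq 0}(\Rep_{\Lambda}\Gbf^{\Frob}).
\]
Since a tilting object admits both a $\Delta$-filtration and a $\nabla$-filtration (Theorem \ref{thm:StructureCatO}), $\ch(T)$ lies simultaneously in non-negative and non-positive cohomological degrees, and is therefore concentrated in degree zero. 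Combining this with $(i)$, $\ch(T)$ is a compact object of $\DD(\Rep_{\Lambda}\Gbf^{\Frob})$ concentrated in a single cohomological degree, i.e.\ a perfect complex of $\Lambda[\Gbf^{\Frob}]$-modules concentrated in degree zero, which is automatically a finitely generated projective $\Lambda[\Gbf^{\Frob}]$-module.

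The core non-formal input is the perfectness of $\RGamma_c$ of a finite-type variety with finite group action. Once this is granted, the argument exploits the two-sided filtration structure on tilting sheaves in a very clean way: the $\Delta$-filtration controls compactness and the lower cohomological bound, while the $\nabla$-filtration controls the upper cohomological bound, and affineness is exactly what is needed to align these bounds at degree zero. The only expected technical care is in keeping track of the normalizations (shifts and half Tate twists) in the definition of $\ch$, but these do not affect either compactness or the location of cohomology.
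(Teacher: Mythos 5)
Your argument for part $(i)$ takes a genuinely different route from the paper's, and both work. The paper observes that $\alpha$ is proper smooth and $\beta$ is smooth, so $\ch = \alpha_!\beta^*[\dim\Ubf](\tfrac{\dim\Ubf}{2})$ admits a continuous right adjoint and hence preserves compact objects; combined with the fact that tilting sheaves are compact in $\Ocal^{\DL}$ (Proposition \ref{prop:compactGenCatO} and Lemma \ref{lem:CharDeltaFilt}), this finishes $(i)$ in one line, without ever unwinding what $\ch(\Delta_{w,\chi})$ is. You instead unravel the $\Delta$-filtration, identify the graded pieces with $e_\chi\RGamma_c(Y(\dot w),\Lambda)[\ell(w)]$, and appeal to the perfectness of $\RGamma_c$ of a finite-type variety with finite group action. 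Your version is more concrete and requires the extra input of Rickard--Bonnafé--Rouquier type finiteness, whereas the paper's is purely formal; both are correct.

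There is, however, a genuine gap in your final step of part $(ii)$. You claim that a perfect complex of $\Lambda[\Gbf^{\Frob}]$-modules concentrated in degree zero is \emph{automatically} a finitely generated projective module. This is false for a general ring: already $\Z/2$ admits the finite free resolution $0 \to \Z \xrightarrow{2} \Z \to \Z/2 \to 0$ over $\Z$, so $\Z/2[0]$ is a perfect complex concentrated in degree $0$, yet it is not projective. The same phenomenon occurs for $\Lambda = \Zlb$: perfectness over $\Zlb[\Gbf^{\Frob}]$ only forces $H^0(\ch(T))$ to have finite projective dimension, which by itself does not force projectivity. What rescues the argument, and what the paper explicitly invokes, is that $\Lambda[\Gbf^{\Frob}]$ is a \emph{symmetric} $\Lambda$-algebra; over a symmetric (in particular self-injective) algebra, a finitely generated module of finite projective dimension is already projective. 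You should add this appeal to the symmetric structure of $\Lambda[\Gbf^{\Frob}]$ to close the argument; the rest of part $(ii)$, including the cohomological bounds from the $\Delta$- and $\nabla$-filtrations under the affineness hypothesis, is identical to the paper's.
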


\begin{proof}
Since the map $\alpha$ is proper and smooth and the map $\beta$ is smooth the functor $\ch$ has a continuous right adjoint hence $\ch$ preserves compact objects. Since tilting sheaves are compact by Proposition \ref{prop:compactGenCatO}, the objects $\ch(T)$ are compact.

By Theorem \ref{thm:ConservativityAndDeligneLusztig} and Lemma \ref{lem:Affineness}, it follows that $\ch(\Delta) \in \DD^{\geq 0}(\pt/\Gbf^{\Frob})$ (resp. $\ch(\nabla) \in \DD^{\leq 0}(\pt/\Gbf^{\Frob})$). Since $T$ has both a $\Delta$ and a $\nabla$-filtration, we get that $\ch(T) \in \DD^{\heartsuit}(\pt/\Gbf^{\Frob})$. Hence $\ch(T)$ is a representation of $\Gbf^{\Frob}$ concentrated in degree $0$, which is compact in $\DD(\pt/\Gbf^{\Frob})$. Since $\Lambda[\Gbf^{\Frob}]$ is a symmetric algebra, it follows that the representation $\ch(T)$ is projective. 
\end{proof}

\begin{prop}
The representations $\ch(T)$ for $T \in \Ocal_{\tilt}^{\DL}$ generate the triangulated category $\Perf(\Lambda[\Gbf^{\Frob}])$.
\end{prop}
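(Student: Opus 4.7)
The approach is Neeman's compact generation theorem: a set of compact objects in the compactly generated triangulated category $\DD(\Rep_{\Lambda} \Gbf^{\Frob})$ generates its subcategory of compact objects, namely $\Perf(\Lambda[\Gbf^{\Frob}])$, as a thick (equivalently triangulated) subcategory if and only if it has trivial right orthogonal in the ambient category. Each $\ch(T)$ is compact by Theorem~\ref{thm:TiltingRepsAreProjective}(i), so it is enough to verify this vanishing.

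The first step is a reduction. By Proposition~\ref{prop:generationTilting} the tilting sheaves generate $\Ocal^{\DL, \omega}$ as a triangulated category, and $\Ocal^{\DL, \omega}$ compactly generates $\Ocal^{\DL}$. Since $\ch$ preserves colimits---its right adjoint $R$ is continuous, as recorded in the proof of Theorem~\ref{thm:TiltingRepsAreProjective}---the localizing subcategory of $\DD(\Rep_{\Lambda} \Gbf^{\Frob})$ generated by the $\ch(T)$'s contains the full essential image $\ch(\Ocal^{\DL})$. It therefore suffices to show that whenever $X \in \DD(\Rep_{\Lambda} \Gbf^{\Frob})$ satisfies $\Hom(\ch(A), X) = 0$ for every $A \in \Ocal^{\DL}$, one has $X = 0$.

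By the adjunction $\ch \dashv R$ this hypothesis reads $R(X) = 0$, so the task reduces to checking that $R$ is conservative. The plan is to identify $R$ with $\hc$ up to a shift and Tate twist by unwinding the horocycle correspondence: one has $R \simeq \beta_* \alpha^!$ while $\hc \simeq \beta_! \alpha^*$, and the proper--smoothness of $\alpha$ gives $\alpha^! \simeq \alpha^*$ up to a shift and twist, while the smoothness of $\beta$ with connected unipotent fibers over the Bruhat strata gives $\beta_* \simeq \beta_!$ up to a shift and twist. Then $\hc(X) = 0$, and the conservativity of $\hc$ from Theorem~\ref{thm:ConservativityAndDeligneLusztig} forces $X = 0$.

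The main obstacle is the bookkeeping in the identification $R \simeq \hc$: the relative dimension of $\beta$ varies across Bruhat strata, so the shift/twist relating $\beta_*$ and $\beta_!$ is a priori stratum-dependent, and one must check that the normalizations chosen in the definitions of $\ch$ and $\hc$ are globally compatible. A technically lighter alternative is to appeal to classical Deligne--Lusztig generation: Bonnaf\'e--Rouquier-type results, with integral and modular refinements, already show that the thick subcategory of $\Perf(\Lambda[\Gbf^{\Frob}])$ generated by the cohomologies $\RGamma_c(Y(\dot w), \Lambda)$ is all of $\Perf$. Combined with the formula $\ch(\Delta_{w, \chi}) = e_\chi \RGamma_c(Y(\dot w), \Lambda)[\ell(w)]$ and with the observation that the $\ch(T)$'s and $\ch(\Delta_{w,\chi})$'s generate the same thick subcategory of $\Perf$---which follows from Proposition~\ref{prop:generationTilting}---this gives the conclusion without needing the adjoint identification.
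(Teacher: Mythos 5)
Your argument follows essentially the same route as the paper, which simply observes that since tilting sheaves generate $\Ocal^{\DL,\omega}$ (Proposition~\ref{prop:generationTilting}), the statement reduces to the conservativity of $\hc$ from Theorem~\ref{thm:ConservativityAndDeligneLusztig}. You are more explicit about the mechanism: you correctly frame this via Neeman compact generation, and you correctly identify that the adjunction doing the work is $\ch \dashv R$ with $R = \beta_*\alpha^![-\dim\Ubf]$ the \emph{right} adjoint of $\ch$, not the stated $\hc \dashv \ch$; the paper elides this distinction and one must relate $R$ to $\hc$. That said, your diagnosis of the obstacle is slightly off: the relative dimension of $\beta$ is uniform across strata (its fibers are $\Ubf$-gerbe-like of fixed dimension), so the shift relating $\beta_*$ and $\beta_!$ on the base is not stratum-dependent; what is stratum-dependent is the dimension $\ell(w)$ of the affine Deligne--Lusztig variety $Y(\dot w)$, which enters after taking $j_w^*$/$j_w^!$. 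More importantly, one does not need $R \simeq \hc$ up to a uniform shift: it suffices that $R$ and $\hc$ have the same kernel. This follows stratum by stratum, since by the formula of Theorem~\ref{thm:ConservativityAndDeligneLusztig}(ii) and its adjoint version, $j_w^*\hc(X)$ and $j_w^!R(X)$ are computed by $\RGamma_c$ and $\RGamma$ of the local system $\alpha^*X$ on the smooth variety $Y(\dot w)$, and Poincar\'e duality relates these (with the stratum-dependent shift $[2\ell(w)](\ell(w))$ that is harmless for a vanishing statement). Your alternative appeal to classical Deligne--Lusztig generation is a valid shortcut, though it is in essence the same fact as the conservativity of $\hc$, merely packaged in terms of the $\RGamma_c(Y(\dot w),\Lambda)$ rather than the functor.
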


\begin{proof}
Since the $\ch(T)$ generate the category $\Ocal_{\tilt}^{\DL}$ this statement is equivalent to the conservativity property of the functor $\hc$ of Theorem \ref{thm:ConservativityAndDeligneLusztig}.  
\end{proof}

\section{Application to Alvis--Curtis duality}

\subsection{The Alvis--Curtis dual of an $\mathrm{IC}$ representation}

We consider the category $\DDc(\pt/\Gbf^{\Frob}, \Qlb)^{\mix}$ of representations of $\Gbf^{\Frob}$ such that the underlying complex of vector spaces is mixed, where we use, as in Section \ref{sec:MixedCategoryO}, the Frobenius $\Frob^{\delta}$.  

We consider the Alvis--Curtis duality functor, see \cite{DeligneLusztigDuality1}
$$d : \DDc(\pt/\Gbf^{\Frob}, \Qlb)^{\mix} \to \DDc(\pt/\Gbf^{\Frob}, \Qlb)^{\mix}.$$
We denote by $R(\Gbf^{\Frob})$ the Grothendieck group of $\DDc(\Gbf^{\Frob},\Qlb)$ and we consider the ring $\Z[v^{\pm 1}] \otimes R(\Gbf^{\Frob})$. There is natural map 
$$\gamma : K_0(\DDc(\pt/\Gbf^{\Frob}, \Qlb)^{\mix}) \to \Z[v^{\pm 1}] \otimes R(\Gbf^{\Frob})$$
which sends the class of a representation $\rho = \oplus_j \rho_j$ where $\rho_j$ is the generalized eigenspace of Frobenius of weight $j$ to $\sum_j v^j \otimes [\rho_j]$. The group $K_0(\DDc(\pt/\Gbf^{\Frob}, \Qlb)^{\mix})$ is $\Z[v^{\pm 1}]$-linear where $v^{-1}$ acts by the half-Tate twist and the map $\gamma$ is $\Z[v^{\pm 1}]$-linear. 

\begin{rque}
The functor $\gamma \circ \ch$ induces a $\Z[v^{\pm 1}]$-linear morphism 
$$K_0(\Ocal^{\DL, \mix}_{\Qlb}) \to \Z[v^{\pm 1}] \otimes R(\Gbf^{\Frob}).$$
\end{rque}

Let $a : \Hh \to \Hh$ be the automorphism of the Hecke algebra given by 
$$a(v) = v, a(H_x) = (-1)^{\ell(x)}H_{x^{-1}}^{-1}.$$
Note that we have $\underline{H}_w = (-1)^{\ell(w)}a(\underline{\tilde{H}}_w)$, see \cite[Theorem 2.7]{SoergelHeckeAlgebra}. 

We will denote, as in \cite[Section 3.1]{LusztigBook}, $\Irr(\Wbf)_{\ex}$ the set of irreducible modules of $\Wbf$ which can be extended to a $\Wbf \rtimes \langle \Frob \rangle$-module, and for $\widetilde{E} \in \Irr(\Wbf)_{\ex}$ we denote by $\widetilde{E}(v)$ the corresponding $\Q(v)$-irreducible $\Hh \otimes_{\Z[v^{\pm 1}]} \Q(v)$-representation. Finally given $\widetilde{E} \in \Irr(\Wbf)_{\ex}$, we denote by $R_{\widetilde{E}}$ the corresponding almost character of $\Gbf^{\Frob}$ constructed in \cite{LusztigBook} and we consider the following two maps $\Hh \to \Q(v) \otimes R(\Gbf^{\Frob})$
\begin{enumerate}
\item $\Hh = K_0(\Ocal^{\DL, \mix, \unip}_{\Qlb}) \xrightarrow{\ch} K_0(\Rep^{\mix}_{\Qlb} \Gbf^{\Frob}) \xrightarrow{\gamma} \Q(v) \otimes R(\Gbf^{\Frob})$, we shall denote this map by $\ch$, 
\item $\tr : \Hh \to \Q(v) \otimes R(\Gbf^{\Frob})$ the map defined by $\tr(f) = \sum_{\widetilde{E}} \tr(f\Frob, \widetilde{E}(u)) \otimes R_{\tilde{E}}$. 
\end{enumerate}

\begin{thm}[\protect{\cite[Theorem 3.8]{LusztigBook}}]\label{thm:Lusztig38}
There is an equality of $\Z[v^{\pm 1}]$-linear maps $\Hh \to \Q(v) \otimes R(\Gbf^{\Frob})$ 
$$\tr = \ch \circ b.$$
(Recall that $b$ is defined in Remark \ref{rq:Defb}.)
\end{thm}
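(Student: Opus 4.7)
The plan is to reduce the equality $\tr = \ch \circ b$ to its values on the standard basis $\{H_w\}_{w \in \Wbf}$ of $\Hh$, which is fixed pointwise by $b$. Both sides will then be determined on the rest of $\Hh$ by extension using the appropriate (semi)linearity properties, where the semilinearity of $\ch \circ b$ in $v$ (coming from $b(v) = -v^{-1}$) matches the identity $\underline{H}_w = (-1)^{\ell(w)} a(\underline{\tilde{H}}_w)$ discussed before the statement.

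For the right hand side, Lemma \ref{lem:Categorification} gives $H_w = [\Delta_{w,1}^{\mix}]$, hence
\[
\ch(b(H_w)) = \ch(H_w) = \gamma\bigl(\ch(\Delta_{w,1}^{\mix})\bigr).
\]
By Theorem \ref{thm:ConservativityAndDeligneLusztig}(ii), the underlying sheaf $\ch(\Delta_{w,1}^{\mix})$ coincides, up to a shift by $\ell(w)$ and a half-Tate twist, with the Deligne--Lusztig virtual representation $R_w^1 = \RGamma_c(X(w), \Qlb)$ equipped with its natural $\Frob^{\delta}$-action. Passing through $\gamma$ therefore yields $R_w^1$ refined by its generalized Frobenius-eigenvalue decomposition, with the variable $v$ recording weights via the normalization $[-(\frac{1}{2})] = v^{-1}[-]$.

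For the left hand side, the defining formula
\[
\tr(H_w) = \sum_{\widetilde{E}} \tr\bigl(H_w \Frob,\, \widetilde{E}(v)\bigr)\, R_{\widetilde{E}}
\]
is precisely the expansion of the weighted Deligne--Lusztig character $R_w^1$ in the basis of almost characters, with coefficients obtained by taking the trace of $H_w \Frob$ on the irreducible $\Q(v)$-Hecke modules $\widetilde{E}(v)$. This is exactly the content of Lusztig's original Theorem 3.8 in \cite{LusztigBook}, which computes each graded piece of $\RGamma_c(X(w), \Qlb)$ under its Frobenius eigenspace decomposition as a linear combination of almost characters $R_{\widetilde{E}}$ with coefficients of this Hecke-theoretic form.

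The main obstacle is convention bookkeeping: one must carefully align the half-Tate twist encoded by $v^{-1}$ in $\gamma$ with the weight filtration on $\RGamma_c(X(w), \Qlb)$, track the $\ell(w)$-shift separating $\Delta_{w,1}^{\mix}$ from the unshifted cohomology, and verify that the normalization of $\widetilde{E}(v)$ and of the almost characters $R_{\widetilde{E}}$ used in the definition of $\tr$ coincide with those employed in \cite{LusztigBook}. Once these identifications are in place, the equality on $H_w$ is exactly Lusztig's formula, and extending to all of $\Hh$ completes the proof.
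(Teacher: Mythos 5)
Your proposal evaluates the identity on the standard basis $\{H_w\}$ and then tries to extend; this does not match the paper's proof, which evaluates directly on the Kazhdan--Lusztig basis $\{\underline{H}_w\}$, and the difference is not harmless. There are two genuine problems.

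\textbf{Misattribution of Lusztig's Theorem 3.8.} Evaluated at $H_w$ (where $b(H_w)=H_w$), the claim $\tr = \ch\circ b$ becomes $\tr(H_w) = \ch(H_w) = \gamma\bigl(\ch(\Delta_{w,1}^{\mix})\bigr)$, i.e.\ a formula for the graded character of $\RGamma_c(X(w))$. That is \emph{not} Lusztig's Theorem~3.8. That theorem concerns the intersection cohomology $\IH(\overline{X(w)})$, which under $\ch$ corresponds to $\IC_{w,1}^{\mix}$, i.e.\ to $\underline{\tilde{H}}_w = b(\underline{H}_w)$. The paper's proof is exactly the observation that Lusztig's Theorem 3.8 \emph{is} the identity $\tr = \ch\circ b$ evaluated at $\underline{H}_w$: one gets $\tr(\underline{H}_w) = \ch(\underline{\tilde{H}}_w) = \gamma(\ch(\IC_{w,1}^{\mix}))$. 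A formula of the $H_w$ type does exist in the literature, but it is a different statement.

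\textbf{The extension argument fails.} Since $b(v) = -v^{-1}$, the map $\ch\circ b$ is not $\Z[v^{\pm 1}]$-linear: one has $(\ch\circ b)(v^n f) = (-1)^n v^{-n}(\ch\circ b)(f)$, whereas $\tr(v^n f) = v^n \tr(f)$. Consequently, agreement on the $\Z[v^{\pm 1}]$-basis $\{H_w\}$ does \emph{not} propagate. Concretely, writing $\underline{H}_w = \sum_y p_{y,w}(v) H_y$ with $p_{y,w}$ the Kazhdan--Lusztig polynomials, one has $\tr(\underline{H}_w) = \sum_y p_{y,w}(v)\tr(H_y)$ but $(\ch\circ b)(\underline{H}_w) = \ch(\underline{\tilde{H}}_w) = \sum_y p_{y,w}(-v^{-1})\ch(H_y)$, and $p_{y,w}(v) \neq p_{y,w}(-v^{-1})$ in general. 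Worse, if $\tr(H_w) = \ch(H_w)$ held as you claim, then since both $\tr$ and $\ch$ \emph{are} $\Z[v^{\pm1}]$-linear you would conclude $\tr = \ch$, which is a different (and false) assertion from $\tr = \ch\circ b$. The invocation of $\underline{H}_w = (-1)^{\ell(w)}a(\underline{\tilde{H}}_w)$ does not repair this, since $a$ and $b$ are different involutions. The reduction must be done at the Kazhdan--Lusztig basis, where Lusztig's Theorem 3.8 applies verbatim and is exactly what is consumed by Corollary~\ref{cor:Duality}.
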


\begin{proof}
Tracing the definitions, the statement of \cite[Theorem 3.8]{LusztigBook} is precisely this equality evaluated at $\underline{H}_w$. Since these elements form a basis of $\Hh$, the theorem follows. 
\end{proof}

\begin{thm}[\protect{\cite[Proposition 6.9]{LusztigBook}}]\label{thm:Lusztig69}
There is an equality in $\Q(v) \otimes R(\Gbf^{\Frob})$,
$$d\ch(\underline{\tilde{H}}_{w,1}) = \pm\tr(\underline{\tilde{H}}_{w,1}).$$
\end{thm}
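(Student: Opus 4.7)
The plan is to recognize this identity as a reformulation of Lusztig's \cite[Proposition 6.9]{LusztigBook}, translated through the categorification framework of Section \ref{sec:MixedCategoryO}.

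The starting point is to rewrite the right-hand side using Theorem \ref{thm:Lusztig38}. Applied to $\underline{\tilde{H}}_{w,1}$ and combined with the involutive identity $b(\underline{\tilde{H}}_w) = \underline{H}_w$ from \cite[Remark 2.4]{SoergelHeckeAlgebra}, it yields
$$\tr(\underline{\tilde{H}}_{w,1}) = \ch(b(\underline{\tilde{H}}_{w,1})) = \ch(\underline{H}_{w,1}).$$
The theorem is thus equivalent to the cleaner statement
$$d\ch(\underline{\tilde{H}}_{w,1}) = \pm\, \ch(\underline{H}_{w,1})$$
in $\Q(v) \otimes R(\Gbf^{\Frob})$.

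Next, I would translate both sides to the geometric objects provided by Lemma \ref{lem:Categorification}. The left-hand side is (up to normalization coming from the half-shift $[\dim\Ubf](\frac{\dim\Ubf}{2})$ in the definition of $\ch$) the Alvis--Curtis dual of the mixed class $[\IH(\overline{X(w)}, \Qlb)]$, since $\underline{\tilde{H}}_{w,1} = [\IC_{w,1}^{\mix}]$ and $\ch$ converts IC sheaves on the horocycle stack into intersection cohomology of closures of Deligne--Lusztig varieties via the proper map $\alpha$. The right-hand side is the mixed character $\ch(T_{w,1}^{\mix})$ of the tilting representation, since $\underline{H}_{w,1} = [T_{w,1}^{\mix}]$.

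Finally, I would match these two quantities using Lusztig's explicit calculation. In \cite[Proposition 6.9]{LusztigBook}, the Alvis--Curtis dual of $[\IH(\overline{X(w)})]$ is expressed in the standard basis $(H_v)$ of $\Hh$ via Kazhdan--Lusztig polynomials; the same polynomials, up to the sign change $\underline{H}_w = (-1)^{\ell(w)} a(\underline{\tilde{H}}_w)$ of \cite[Theorem 2.7]{SoergelHeckeAlgebra}, govern the $\Delta$-filtration of $T_{w,1}^{\mix}$ and hence, after applying $\ch$ (which sends $H_v$ to a shifted Deligne--Lusztig character $R_v[\ell(v)]$ by Theorem \ref{thm:ConservativityAndDeligneLusztig}(ii)), the character of $\ch(T_{w,1}^{\mix})$. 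Term-by-term comparison then yields the identity, with the $\pm$ sign accounted for by the $(-1)^{\ell(w)}$ factor.

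The main obstacle is not conceptual but bookkeeping: one has to carefully track the interaction between the two involutions $a$ and $b$ of $\Hh$, the Tate twists and shifts absorbed into the definition of $\ch$, and Lusztig's original almost-character normalization. Once these are aligned, the geometric identity reduces cleanly to the Hecke-algebraic identity $\underline{H}_w = (-1)^{\ell(w)} a(\underline{\tilde{H}}_w)$ translated by Theorem \ref{thm:Lusztig38}.
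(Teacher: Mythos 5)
The paper gives no proof of this theorem at all: it is stated as a direct citation of Lusztig's Proposition 6.9, the only added remark being one sentence about signs. Your proposal therefore cannot be compared against a paper-internal argument, and indeed in your final step you invoke Lusztig's Proposition 6.9 itself (``match these two quantities using Lusztig's explicit calculation''), so the substance of your argument is still just the citation.

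The more serious issue is that the intermediate machinery you insert — converting $\tr(\underline{\tilde{H}}_{w,1})$ to $\ch(\underline{H}_{w,1})$ via Theorem \ref{thm:Lusztig38} and the relation $b(\underline{\tilde{H}}_w) = \underline{H}_w$, then reinterpreting both sides geometrically through Lemma \ref{lem:Categorification} as $d\,[\IH(\overline{X(w)})]$ and $\ch(T^{\mix}_{w,1})$ respectively — is not a proof of Theorem \ref{thm:Lusztig69}, but precisely the paper's derivation of Corollary \ref{cor:Duality} \emph{from} Theorem \ref{thm:Lusztig69}. You have essentially written out the deduction of the corollary and labeled it a proof of the theorem. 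Had you stopped after the sentence ``recognize this identity as a reformulation of Lusztig's \cite[Proposition 6.9]{LusztigBook}'' (possibly with a brief translation of notation, as the paper does for Theorem \ref{thm:Lusztig38}), you would have matched what the paper actually does; the remaining paragraphs belong to the next statement.
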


The signs are determined in \emph{loc. cit.}, we will not need them so we do not introduce them to avoid cumbersome notations.

Using the categorification Theorem \ref{lem:Categorification}, we then deduce the following corollary. 

\begin{corol}\label{cor:Duality}
There is an equality in $\Q(v) \otimes R(\Gbf^{\Frob})$ 
$$d(\ch([\IC_{w,1}^{\mix}])) = \pm\ch([T_{w,1}^{\mix}]).$$
\end{corol}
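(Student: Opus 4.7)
The proof is essentially a matter of chaining together the three inputs stated just above the corollary: the categorification dictionary of Lemma \ref{lem:Categorification}, Lusztig's formula Theorem \ref{thm:Lusztig69}, and the identification of $\tr$ with $\ch \circ b$ given by Theorem \ref{thm:Lusztig38}. The bridge between the two self-dual bases is the involution $b$, which is what links the IC side (the basis $\underline{\tilde{H}}_w$) to the tilting side (the basis $\underline{H}_w$).

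More precisely, I would proceed as follows. First, by Lemma \ref{lem:Categorification} the class $[\IC_{w,1}^{\mix}]$ corresponds to $\underline{\tilde{H}}_w$ in $\Hh$, so by Theorem \ref{thm:Lusztig69} we have
$$d\bigl(\ch([\IC_{w,1}^{\mix}])\bigr) = d\bigl(\ch(\underline{\tilde{H}}_w)\bigr) = \pm \tr(\underline{\tilde{H}}_w).$$
Next, apply Theorem \ref{thm:Lusztig38}, which rewrites the right-hand side as $\pm \ch(b(\underline{\tilde{H}}_w))$. Now use the remark preceding Lemma \ref{lem:Categorification}: since $b$ is an involution and $b(\underline{H}_w) = \underline{\tilde{H}}_w$, we have $b(\underline{\tilde{H}}_w) = \underline{H}_w$. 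Finally, Lemma \ref{lem:Categorification} again identifies $\underline{H}_w = [T_{w,1}^{\mix}]$, which yields
$$d\bigl(\ch([\IC_{w,1}^{\mix}])\bigr) = \pm \ch\bigl(\underline{H}_w\bigr) = \pm \ch\bigl([T_{w,1}^{\mix}]\bigr),$$
as claimed.

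In this form the corollary really is a formal consequence of the two Lusztig theorems together with the categorification dictionary, so there is no genuine obstacle; the only point that requires care is correctly matching the involutions. One should verify that the $b$ appearing in Theorem \ref{thm:Lusztig38} is the same involution of $\Hh$ as the one featured in the remark identifying $b(\underline{H}_w) = \underline{\tilde{H}}_w$, and that the $\Z[v^{\pm 1}]$-linearity of all the maps (in particular of $\ch$ via the half-Tate twist) is compatible with the normalizations used in \cite{LusztigBook}. Once these compatibilities are in place, the statement about signs can be read off directly from Theorem \ref{thm:Lusztig69}, so no new sign analysis is required.
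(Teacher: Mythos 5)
Your proof is correct and is essentially the deduction the paper leaves implicit when it says ``Using the categorification Theorem \ref{lem:Categorification}, we then deduce the following corollary'': you apply Lemma \ref{lem:Categorification} to translate $[\IC_{w,1}^{\mix}]$ to $\underline{\tilde{H}}_w$, invoke Theorem \ref{thm:Lusztig69}, rewrite $\tr$ via Theorem \ref{thm:Lusztig38}, use the involutivity of $b$ together with $b(\underline{H}_w) = \underline{\tilde{H}}_w$ to get $b(\underline{\tilde{H}}_w) = \underline{H}_w$, and translate back with Lemma \ref{lem:Categorification}. Your flagged check that the $b$ of Theorem \ref{thm:Lusztig38} is the same involution as in the remark before the theorem is settled by the paper's consistent notation, and the sign is indeed inherited directly from Theorem \ref{thm:Lusztig69} as you note.
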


\begin{rque}
We have restricted ourselves to the unipotent case, as this suffices to prove the conjecture of Dudas–Malle. However it should be noted that, assuming Conjecture \ref{conj:CharacterMixedTilting}, a non-unipotent variant of Corollary \ref{cor:Duality} could be deduced. Indeed, Theorems \ref{thm:Lusztig38} and \ref{thm:Lusztig69} are both shown in \cite{LusztigBook} in the non-unipotent case. 
\end{rque}

\subsection{Eigenvalues of Frobenius}

We make a quick reminder on the theory of eigenvalues of Frobenius. 

\begin{thm}[\protect{\cite[Theorem 3.8.1]{LusztigBook}}]\label{thm:ExistenceRoot1}
Let $\rho \in \Irr_{\Qlb}^{\unip} \Gbf^{\Frob}$ be an irreducible unipotent representation. There exists a root of unity $\zeta_{\rho} \in \Qlb^{\times}$ satisfying the following property, for all $A \in \Ocal^{\DL, \unip, \mix}$, the eigenvalues of $\Frob^{\delta}$ on
the $\rho$-isotypic component of $\ch(A)$, are of the form $q^{\frac{i}{2}}\zeta$ for $i \in \Z$. 
\end{thm}

\begin{proof}
This exact statement is proved in \cite[Theorem 3.8.1]{LusztigBook} when $A = \Delta_{w,1}^{\mix}$ (and thus $\ch(A) = \RGamma_c(X(w), \Qlb)[\ell(w)]$). Since these objects generate the category $\Ocal^{\DL, \unip, \mix}$, the statement follows. 
\end{proof}

Let $\rho$ be as in Theorem \ref{thm:ExistenceRoot1}, and let $d(\rho) \in \Irr_{\Qlb}^{\unip} \Gbf^{\Frob}$ be the Alvis--Curtis dual of $\rho$ (written without signs). 

\begin{lem}\label{lem:DualityRootOf1}
The following equality holds 
$$\zeta_{d(\rho)} = \zeta_{\rho}.$$
\end{lem}

\begin{proof}
By \cite[3.33]{LusztigBook77} (see also \cite[Lemma 4.2]{GeckMalle}), the map 
$$\rho \mapsto \zeta_{\rho}$$ 
is constant on Harish-Chandra series. Since Harish-Chandra series are invariant under Alvis--Curtis duality, the statement follows.
\end{proof}

Recall (see \ref{sec:Notations}) that we denote by $\DD_c(\pt/\Gbf^{\Frob}, \Qlb)^{\Weil}$ the category of pairs $(\rho, \alpha)$ where $\rho \in \DD_c(\pt/\Gbf^{\Frob}, \Qlb)$ and $\alpha$ is an isomorphism $\Frob^{\delta,*}\rho \simeq \rho$. For $\lambda \in \Qlb$ and $(\rho, \alpha) \in \DD_c(\pt/\Gbf^{\Frob}, \Qlb)^{\Weil}$, we denote by 
$$\rho[\lambda]$$
the direct summand of $(\rho, \alpha)$ where $\alpha$ has generalized eigenvalue $\lambda$. For $\overline{\lambda} \in \Flb$, we denote by 
$$\rho[\overline{\lambda}] = \bigoplus_{\lambda} \rho[\lambda]$$ 
where the direct sum is indexed by all $\lambda \in \Zlb$ which reduce to $\overline{\lambda} \in \Flb$. 

\begin{lem}\label{lem:DualityEVFrob}
Let $A \in \Ocal^{\DL, \unip, \mix}$, then there is an equality in $K_0(\Rep_{\Qlb}(\Gbf^{\Frob}))$ for all $\lambda \in \Qlb^{\times}$,
$$d(\ch(A)[\lambda]) = (d(\ch(A))[\lambda].$$
\end{lem}

\begin{proof}
It is enough to prove the statement for a generating collection of sheaves in $\Ocal^{\DL, \unip, \mix}$, we will prove it for $\IC_w^{\mix}$. By Theorem \ref{thm:ExistenceRoot1}, we can assume that $\lambda = q^{\frac{i}{2}}\zeta$ for some root of unity $\zeta$. Since the equality of Corollary \ref{cor:Duality} is $\Q(v)$-linear, we have
\begin{equation}\label{eq:DualityCal1}
\sum_{\zeta'} d(\IC_w^{\mix}[q^{\frac{i}{2}}\zeta']) = \sum_{\zeta'} d(\IC_w^{\mix})[q^{\frac{i}{2}}\zeta'].
\end{equation}
Let $\rho \in \Irr_{\Qlb}^{\unip} \Gbf^{\Frob}$, applying $\langle \rho, - \rangle$ to \ref{eq:DualityCal1}, we have 
\begin{equation}\label{eq:DualityCal2}
\sum_{\zeta'} \langle \rho, d(\IC_w^{\mix}[q^{\frac{i}{2}}\zeta']) \rangle = \sum_{\zeta'} \langle \rho, d(\IC_w^{\mix})[q^{\frac{i}{2}}\zeta'] \rangle. 
\end{equation}
By Theorem \ref{thm:ExistenceRoot1}, the LHS is 
\begin{align*}
\sum_{\zeta'} \langle \rho, d(\IC_w^{\mix}[q^{\frac{i}{2}}\zeta']) \rangle &= \sum_{\zeta'} \langle d(\rho), \IC_w^{\mix}[q^{\frac{i}{2}}\zeta'] \rangle \\
&= \langle d(\rho), \IC_w^{\mix}[q^{\frac{i}{2}}\zeta_{d(\rho)}] \rangle \\
&= \langle \rho, d(\IC_w^{\mix}[q^{\frac{i}{2}}\zeta_{d(\rho)}]) \rangle. 
\end{align*}
Similarly, the RHS evaluates to
$$\sum_{\zeta'} \langle \rho, d(\IC_w^{\mix})[q^{\frac{i}{2}}\zeta'] \rangle = \langle \rho,  d(\IC_w^{\mix})[q^{\frac{i}{2}}\zeta_{\rho}] \rangle.$$
It thus follows from Lemma \ref{lem:DualityRootOf1} that 
$$\langle \rho, d(\IC_w^{\mix}[q^{\frac{i}{2}}\zeta]) \rangle = \langle \rho, d(\IC_w^{\mix})[q^{\frac{i}{2}}\zeta] \rangle$$
for all $\zeta$ and the lemma follows. 
\end{proof}

\begin{corol}\label{cor:DualityEVFrob}
Let $A \in \Ocal^{\DL, \unip, \mix}$, for all $\overline{\lambda} \in \Flb^{\times}$, there is an equality in $K_0(\Rep_{\Qlb}(\Gbf^{\Frob}))$ 
$$d(\ch(A)[\overline{\lambda}]) = (d(\ch(A))[\overline{\lambda}]).$$
\end{corol}

\begin{proof}
This is simply a matter of grouping terms and applying Lemma \ref{lem:DualityEVFrob}. 
\end{proof}

\subsection{A conjecture of Dudas--Malle}

The goal of this section is to provide an answer to a conjecture of Dudas and Malle \cite[Conjecture 1.2]{DudasMalle}.
The following hypothesis is in force in this section :  

\begin{center}
The Kazhdan--Lusztig basis and the $\ell$-Kazhdan--Lusztig basis coincide. 
\end{center}

By Lemma \ref{lem:IncomposabilityLargeL} this is guaranteed whenever $\ell$ is large enough (depending only on $\Gbf$ and not on the Frobenius). 

Let $(w, \chi)$ be a pair with $w \in \Wbf$ and $\chi$ a $\Qlb$-character of $\Tbf^{w\Frob}$. The object $\IC_{w,\chi}^{\mix}$ is equipped with a canonical Weil-structure, i.e., an isomorphism
$$\Frob^{\delta,*}\IC_{w,\chi}^{\mix} \simeq \IC_{w,\chi}^{\mix}.$$
By functoriality and compatibility with the Frobenius, the object $\ch(\IC_{w,\chi}^{\mix})$ is also equipped with an isomorphism 
$$\Frob^{\delta,*}\ch(\IC_{w,\chi}^{\mix}) \simeq \ch(\IC_{w,\chi}^{\mix}).$$

\begin{thm}[\protect{\cite[Conjecture 1.2]{DudasMalle}}]\label{thm:ConjDudasMalle}
For all $\overline{\lambda} \in \Flb$, the character 
$$d(\ch(\IC_{w,1}^{\mix}[\overline{\lambda}]))$$
is the unipotent part of a character of a projective $\Zlb$-representation. 
\end{thm}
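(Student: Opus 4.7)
The plan is to combine three ingredients: the duality identity of Corollary \ref{cor:Duality}; the projectivity of tilting representations furnished by Theorem \ref{thm:TiltingRepsAreProjective}; and the decomposition of integral tiltings after inverting $\ell$ given in Lemma \ref{lem:Decomp}.

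First, I would promote the identity of Corollary \ref{cor:Duality} to one compatible with the generalized $\Frob^{\delta}$-eigenspace decomposition. Both $\IC_{w,1}^{\mix}$ and $T_{w,1}^{\mix}$ are mixed perverse sheaves, hence carry canonical Weil structures, and the functors $d$, $\ch$ and the almost character map all commute with $\Frob^{\delta,*}$. Applying this Frobenius-compatible version of the identity and projecting to the $\overline{\lambda}$-generalized eigenspace yields, for every $\overline{\lambda} \in \Flb$,
$$d(\ch(\IC_{w,1}^{\mix}[\overline{\lambda}])) = \pm\, \ch(T_{w,1}^{\mix}[\overline{\lambda}])$$
in the Grothendieck group of Weil-structured representations.

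Next, I would exhibit the right-hand side as the unipotent part of a genuine projective $\Zlb$-representation. By Theorem \ref{thm:StructureCatO}, the integral indecomposable tilting $T_{w,1}^{\Zlb} \in \Ocal^{\DL,\unip}_{\Zlb}$ exists, and by Theorem \ref{thm:TiltingRepsAreProjective}, combined with the affineness of the Deligne--Lusztig varieties, $P := \ch(T_{w,1}^{\Zlb})$ is a projective $\Zlb[\Gbf^{\Frob}]$-module. The Frobenius $\Frob^{\delta}$ acts on $P$, and each generalized eigenspace $P[\overline{\lambda}]$ is a direct summand of $P$, hence still projective. After inverting $\ell$, Lemma \ref{lem:Decomp} decomposes $T_{w,1}^{\Zlb}[\tfrac{1}{\ell}]$ according to $\ell$-power characters $\chi_{\ell}$ of $\Tbf^{w\Frob}$; its unipotent ($\chi_{\ell} = 1$) summand equals $T_{w,1}^{\Qlb} \oplus \bigoplus_{v<w}(T_{v,1}^{\Qlb})^{\oplus n_{v,w,1}}$, and Lemma \ref{lem:IncomposabilityLargeL} together with the standing assumption on $\ell$ forces the multiplicities $n_{v,w,1}$ to vanish. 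Consequently, the unipotent part of $P[\overline{\lambda}][\tfrac{1}{\ell}]$ equals $\ch(T_{w,1}^{\Qlb})[\overline{\lambda}]$, which coincides with $\ch(T_{w,1}^{\mix}[\overline{\lambda}])$ once one identifies the Weil structure inherited from the integral lift with that coming from the canonical mixed refinement. Combining the two steps, $d(\ch(\IC_{w,1}^{\mix}[\overline{\lambda}]))$ equals, up to the sign $(-1)^{\ell(w)}$, the unipotent part of the character of the projective $\Zlb$-representation $P[\overline{\lambda}]$, which is the claim.

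The main obstacle lies in justifying the compatibility of the two Weil structures on $\ch(T_{w,1}^{\Qlb})$: the one inherited from $T_{w,1}^{\mix}$ via the mixed categorification of Section \ref{sec:MixedCategoryO}, and the one obtained as the generic fiber of the integral tilting $T_{w,1}^{\Zlb}$. Verifying this identification requires checking that the operations of taking an $\overline{\lambda}$-generalized eigenspace of $\Frob^{\delta}$, extracting the unipotent Lusztig series, and passing from $\Zlb$ to $\Qlb$ coefficients all commute, so that the projectivity of $P[\overline{\lambda}]$ genuinely witnesses the integrality of the mixed side. A secondary subtlety is handling the sign in Corollary \ref{cor:Duality}, which must be absorbed in the precise formulation of the conjecture.
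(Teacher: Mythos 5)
Your overall architecture — produce the projective $\Zlb$-lift from $\ch(T_{w,1}^{\Zlb})$, identify its unipotent part over $\Qlb$ via Lemma~\ref{lem:Decomp} and Lemma~\ref{lem:IncomposabilityLargeL}, and match it against the dual of the $\IC$-representation via Corollary~\ref{cor:Duality} — is exactly the paper's. Your Step~2 is sound and close to the text. The problem is Step~1, where you claim the identity of Corollary~\ref{cor:Duality} can be promoted to a generalized-$\lambda$-eigenspace identity simply because $d$, $\ch$, and the almost character map ``all commute with $\Frob^{\delta,*}$.'' That reasoning does not actually produce the refinement. Corollary~\ref{cor:Duality} is an equality in $\Q(v) \otimes R(\Gbf^{\Frob})$, i.e.\ it only records the \emph{weight} grading (the power of $q^{1/2}$ in $|\lambda|$), not the full Frobenius eigenvalue. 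Two Weil-structured representations with very different eigenvalue decompositions can nevertheless have the same image in $\Q(v) \otimes R(\Gbf^{\Frob})$, so the fact that the functors are Weil-equivariant does not allow you to project the Grothendieck-group identity onto a single $\lambda$- (or $\overline{\lambda}$-) summand.

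The missing ingredient is the argument the paper draws from Lusztig and Geck--Malle: every eigenvalue of $\Frob^{\delta}$ occurring on $\ch(A)$ for $A \in \Ocal^{\DL, \unip, \mix}$ is of the form $q^{i/2}\zeta_{\rho}$ where $\zeta_{\rho}$ is a root of unity depending only on the unipotent irreducible $\rho$; this $\zeta_{\rho}$ is invariant under Alvis--Curtis duality and is constant on Harish-Chandra series. This gives a $d$-stable direct sum decomposition of $\Q(v) \otimes R(\Gbf^{\Frob})$ indexed by roots of unity, along which the weight-graded identity of Corollary~\ref{cor:Duality} \emph{can} legitimately be projected, yielding the eigenvalue-graded identity $d(\ch(T_{w,1}^{\Qlb,\mix}[\lambda])) = \ch(\IC_{w,1}^{\Qlb, \mix})[\lambda]$. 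Without this input the promotion in your Step~1 is unjustified. The obstacle you flag at the end (reconciling the two Weil structures on $\ch(T_{w,1}^{\Qlb})$) is real but secondary, and in fact is handled by the same observation that all relevant eigenvalues lie in $\Zlb$; the primary gap is the absent Harish-Chandra series argument.
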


\begin{proof}
Consider the object $T_{w, 1}^{\Zlb}$. We have 
$$T_{w, 1}^{\Zlb}[\frac{1}{\ell}] = T_{w,1}^{\Qlb} \oplus \bigoplus_{\chi_{\ell} \neq 1} T_{w, \chi_{\ell}}^{\Qlb} \oplus \bigoplus_{v < w, \chi_{\ell} \neq 1} T_{v,\chi_{\ell}}^{\Qlb, \oplus {n_{v,w,\chi_{\ell}}}}$$
by Lemmas \ref{lem:Decomp} and \ref{lem:IncomposabilityLargeL}. 
We equip $T_{w, 1}^{\Zlb}[\frac{1}{\ell}]$ with a Weil structure such that there is an isomorphism of sheaves with Weil structures 
$$T_{w, 1}^{\Zlb}[\frac{1}{\ell}] = T_{w,1}^{\Qlb, \mix} \oplus \bigoplus_{\chi_{\ell} \neq 1} T_{w, \chi_{\ell}}^{\Qlb, \mix} \oplus \bigoplus_{v < w, \chi_{\ell} \neq 1} T_{v,\chi_{\ell}}^{\Qlb,\mix, \oplus {n_{v,w,\chi_{\ell}}}}.$$
Applying $\ch(-)[\overline{\lambda}]$ we get an isomorphism
$$\ch(T_{w, 1}^{\Zlb}[\frac{1}{\ell}])[\overline{\lambda}] = \ch(T_{w,1}^{\Qlb, \mix})[\overline{\lambda}] \oplus \bigoplus_{\chi_{\ell} \neq 1} \ch(T_{w, \chi_{\ell}}^{\Qlb, \mix})[\overline{\lambda}] \oplus \bigoplus_{v < w, \chi_{\ell} \neq 1} \ch(T_{v,\chi_{\ell}}^{\Qlb,\mix, \oplus {n_{v,w,\chi_{\ell}}}})[\overline{\lambda}].$$
The unipotent part of this representation is $\ch(T_{w,1}^{\Qlb, \mix})[\overline{\lambda}]$. Since mixed tilting sheaves are filtered, as sheaves with Weil structure, by mixed standard sheaves, it follows that the only eigenvalues of Frobenius appearing in $\ch(T_{w,\chi}^{\Qlb, \mix})$ are eigenvalues of Frobenius appearing on the cohomology of Deligne--Lusztig varieties. By Theorem \ref{thm:ExistenceRoot1}, these eigenvalues of Frobenius are of the form $q^{\frac{i}{2}}\zeta$ where $i \in \Z$ and $\zeta \in \Qlb^{\times}$ are roots of unity. It follows that the Weil structure on $\ch(T_{w,1}^{\Zlb}[\frac{1}{\ell}])$, which is an isomorphism
$$\Frob^{\delta, *}\ch(T_{w,1}^{\Zlb}[\frac{1}{\ell}])\simeq \ch(T_{w,1}^{\Zlb}[\frac{1}{\ell}])$$
descends to $\Zlb$ as all its eigenvalues lie in $\Zlb$. Hence we have a well-defined $\Zlb$-linear isomorphism 
$$\Frob^{\delta,*}\ch(T_{w,1}^{\Zlb}) \simeq \ch(T_{w,1}^{\Zlb}).$$
There is therefore a well-defined direct summand of $\ch(T_{w,1}^{\Zlb})$ which we denote by $\ch(T_{w,1}^{\Zlb})[\overline{\lambda}]$ over $\Zlb$ and which is characterized by the property that 
$$\ch(T_{w,1}^{\Zlb})[\overline{\lambda}][\frac{1}{\ell}] = \ch(T_{w,1}^{\Zlb})[\frac{1}{\ell}][\overline{\lambda}].$$
Since $\ch(T_{w,1}^{\Zlb})[\overline{\lambda}]$ is a direct summand of $\ch(T_{w,1}^{\Zlb})$, by Theorem \ref{thm:TiltingRepsAreProjective}, this is a projective representation of $\Gbf^{\Frob}$ over $\Zlb$. To prove the theorem, it is enough to show that, after taking characters, we have 
$$d(\ch(T_{w, 1}^{\Qlb, \mix}[\overline{\lambda}])) = \ch(\IC_{w,1}^{\Qlb, \mix})[\overline{\lambda}].$$
This is exactly Corollary \ref{cor:DualityEVFrob}.
\end{proof}

\bibliographystyle{alpha}
\bibliography{biblio}

\end{document}